\documentclass[a4paper,reqno]{amsart}

\usepackage[a4paper,left=3cm,right=3cm,top=3cm,bottom=3cm]{geometry}
\usepackage{graphicx} 
\usepackage{tikz}
\usetikzlibrary{patterns,shapes,decorations.pathmorphing,decorations.pathreplacing,calc,arrows,cd}
\usetikzlibrary{tikzmark, fit, matrix, decorations.markings}
\usepackage{mathdots}
\usepackage{amssymb}
\usepackage{amstext}
\usepackage{amsmath}
\usepackage{amscd}
\usepackage{amsthm}
\usepackage{amsfonts}

\usepackage{array}
\usepackage{enumerate}
\usepackage{latexsym}
\usepackage{mathrsfs}
\usepackage{hyperref}

\usepackage[all,color]{xy}
\usepackage{mathtools}

\usepackage{pgfplots}
\pgfplotsset{compat=1.16}

\usepackage{booktabs}
\usepackage{multirow}
\xyoption{all}
\usepackage{pstricks}
\usepackage{booktabs}
\usepackage{multirow}
\usepackage{mathtools}
\mathtoolsset{showonlyrefs=true}
\usepackage{todonotes}
\usepackage{appendix}

\numberwithin{equation}{section}
\newtheorem{theorem}{Theorem}[section]

\newtheorem{corollary}[theorem]{Corollary}
\newtheorem{lemma}[theorem]{Lemma}
\newtheorem{proposition}[theorem]{Proposition}

\theoremstyle{definition}
\newtheorem{definition}[theorem]{Definition}

\newtheorem{remark}[theorem]{Remark}
\newtheorem{example}[theorem]{Example}

\newtheorem{convention}[theorem]{Convention}

\usepackage{comment}

\newcommand{\Hom}{\operatorname{Hom}\nolimits}

\newcommand{\End}{\operatorname{End}\nolimits}

\newcommand{\im}{\operatorname{im}\nolimits}

\renewcommand{\ker}{\operatorname{ker}\nolimits}

\DeclareMathOperator{\Rep}{\mathsf{Rep}}

\DeclareMathOperator{\Ker}{{\rm Ker}}
\renewcommand{\Im}{{\rm Im}}

\DeclareMathOperator{\Fun}{Fun}

\DeclareMathOperator{\Vect}{\mathsf{Vect}}

\DeclareMathOperator{\colim}{\mathsf{colim}}

\DeclareMathOperator{\Lan}{\mathrm{Lan}}
\DeclareMathOperator{\Ran}{\mathrm{Ran}}

\DeclareMathOperator{\R}{\mathsf{R}}
\renewcommand{\L}{\operatorname{\mathsf{L}}\nolimits}
\DeclareMathOperator{\E}{\mathsf{E}}
\DeclareMathOperator{\EN}{{\rm EN}}
\DeclareMathOperator{\T}{\mathsf{T}}

\renewcommand{\lim}{\operatorname{\mathsf{lim}}\nolimits}

\DeclareMathOperator{\id}{\mathrm{id}}

\usepackage{float}
\newcommand{\qand}{\operatorname{\quad \text{and} \quad}\nolimits}

\begin{document}
\title[Interleaving distances from height-difference functions on posets]{Interleaving distances from height-difference functions on posets}

\author[Toshitaka Aoki]{Toshitaka Aoki}
\address{Toshitaka Aoki, Graduate School of Human Development and Environment, Kobe University, 3-11 Tsurukabuto, Nada-ku, Kobe 657-8501 Japan}
\email{toshitaka.aoki@people.kobe-u.ac.jp}

\begin{abstract}
Interleaving distances provide a fundamental tool for comparing persistence modules and have been widely used in topological data analysis. 
Their definitions are typically based on translation structures (shift operations) on the indexing poset, but on general posets such structures can be scarce, making this framework restrictive.

In this paper, we introduce a new interleaving-type distance for functor categories over arbitrary posets, induced by a \emph{height-difference function} $\rho$. 
The key idea is to associate to $\rho$ an $\mathbb{R}_{\geq 0}$-indexed family of adjoint endofunctors on $\Fun(P,\mathcal{C})$, which play the role of generalized translations and allow us to formulate interleavings in a purely categorical manner and define the distance $d_{\rho}$, called the height-interleaving distance. 
In particular, any height function (i.e., a real-valued order-preserving map) canonically induces such a height-difference function, so our framework remains useful on finite posets.
Moreover, when $P=\mathbb{R}^d$ and $\rho=\rho_{\mathrm{diag}}$, the resulting distance coincides with the classical interleaving distance for multiparameter persistence modules.
However, in general, $d_{\rho}$ need not satisfy the triangle inequality. Under suitable hypotheses 
(e.g. CIP for $(P,\rho)$; this condition is automatic when $P$ is a tree poset), we prove a triangle inequality up to an additive defect bounded by a constant $c(\rho)$; in particular, when $c(\rho)=0$ this yields an extended pseudo-distance.
We also establish a stability property with respect to perturbations of height-difference functions: small changes in $\rho$ induce small changes in the associated height-interleaving distance.
Finally, we study an analogue of the erosion-type constructions from classical interleavings within our framework. 
\end{abstract}

\thanks{MSC2020: 55N31, 16G20, 18A40}
\keywords{Persistence modules, interleaving distance, height-difference functions, adjoint functors, poset-indexed persistence modules}

\maketitle

\tableofcontents

\section{Introduction}
\label{sec:intro}
Persistent homology \cite{ELZ02,CZ05} is a method in topological data analysis that studies the evolution of topological features (such as connected components and holes) along a nested filtration of spaces, and summarizes this information in a persistence diagram, i.e., a multiset of birth–death pairs. 
It has found applications across a range of domains, including materials science \cite{HNH+16}, evolutionary biology \cite{CCR13}, image analysis \cite{QTT+19}, 
and others \cite{HKWNU17,AAF19}.
In applications, a notable aspect of this framework is the availability of stability results: small perturbations of the input lead to controlled changes in the output, 
supporting the use of persistent homology on real-world data \cite{CSEH07,CCSGGO09}.
From an algebraic viewpoint, persistent homology gives rise to persistence modules, that is, functors from a poset to the category of vector spaces.
The interleaving distance formulates such comparisons at the level of persistence modules and plays a key role in establishing these stability results \cite{CCSGGO09,Lesnick15}.
With the growing interest in multiparameter persistent homology, quantitative comparison has become increasingly relevant.

\medskip
\noindent {\bf Background: classical interleaving distance.}
The notion of interleaving provides a concept of proximity between objects in a category, typically defined in terms of a family of shift operations parameterized by $\epsilon\ge 0$.
An $\epsilon$-interleaving can be viewed as an approximate isomorphism, and the infimum of such $\epsilon$ defines the associated interleaving distance.
In particular, a $0$-interleaving is exactly an isomorphism, while two non-isomorphic objects may be $\epsilon$-interleaved for arbitrary small $\epsilon>0$, and hence can have interleaving distance zero.
This viewpoint has been formulated in several settings, including persistence modules in the sense of \cite{BdSS15,BCM20}, categories with a flow \cite{dSMS18}, and more general 2-categorical settings \cite{MN26}.

We focus on the category of persistence modules, and more generally on the category $\Fun(P,\mathcal{C})$ of functors from a poset $P$ to a category $\mathcal{C}$.
In this setting, the interleaving distance on $\Fun(P,\mathcal{C})$ is built from a superlinear family $\Omega$ of translations on $P$, 
consisting of translations $\Omega_\epsilon\colon P\to P$ (order-preserving endomaps with $p \leq \Omega_\epsilon(p)$) for $\epsilon\ge 0$, together with coherent structure $\Omega_\epsilon\circ\Omega_\delta \le \Omega_{\epsilon+\delta}$.
In the case $P=\mathbb{R}^d$ (with the product order), the standard choice is the diagonal shift $\Omega_\epsilon(x)=x+(\epsilon,\ldots,\epsilon)$, and the resulting interleaving distance has been studied extensively, 
including 
fundamental aspects \cite{Lesnick15,GM22,BL20,BS25}, 
stability results \cite{BL24,Bjerkevik21}, 
computational complexity \cite{BBK20}, 
and connections to other distances and invariants \cite{MP20,CKM24,KS25}. 
In addition, we refer the reader to \cite{BCM20,EMY23,Bjerkevik25,MM17} for recent work on interleavings for persistence modules over posets.

This translation-based interleaving framework depends on the internal order structure of $P$, and on general posets 
suitable translations may be scarce. 
In particular, when a translation has a fixed point (e.g., on finite posets), an interleaving condition imposes an isomorphism at that point; otherwise, one cannot have an interleaving, which may lead to an infinite interleaving distance.
Thus, the availability of suitable translations plays a key role in this framework.
This naturally raises the question of whether one can develop a more flexible framework for quantitative comparison that remains useful even on finite posets.

\medskip
\noindent {\bf Our framework and results.}
In this paper, we introduce a new interleaving-type distance on $\Fun(P,\mathcal{C})$ built from an $\mathbb{R}_{\ge 0}$-indexed adjoint family of endofunctors.
The input is a height-difference function $\rho$ on $P$, i.e.\ a nonnegative function on comparable pairs satisfying a superadditivity condition along chains 
(see Definition~\ref{def:height-difference}).
For example, any height function $\phi \colon P\to\mathbb{R}$ (i.e., a real-valued order-preserving map) canonically induces a height-difference function.
Given $\rho$, we associate an adjoint pair 
$\L_r^{\rho}\dashv \R_r^{\rho}$ for each $r\ge 0$ (Proposition~\ref{prop:adjoint LrRr}),
\[
\L_r^{\rho}\colon \Fun(P,\mathcal{C}) \rightleftarrows \Fun(P,\mathcal{C}) \colon \R_r^{\rho}.
\]

These functors are inspired by latching and matching constructions for diagrams: 
$\L_r^{\rho}$ aggregates information over the $\rho$-controlled lower $r$-neighborhood of each point via a colimit, while $\R_r^{\rho}$ aggregates over the corresponding upper $r$-neighborhood via a limit.
In this sense, they play the role of generalized shifts at scale $r$.
This allows us to define an interleaving-type diagram (with respect to $\rho$) between $M$ and $N$:
\begin{equation}\label{eq:intro-interleaving}
\xymatrix@C=36pt@R=26pt{
\L_r^\rho M \ar[r] \ar[dr]^(0.4){p^{\sharp}} & M \ar[r] \ar[dr]^(0.4){p} & \R_r^\rho M \\ 
\L_r^\rho N \ar[r] \ar[ur]^(0.3){q^{\sharp}} & N \ar[r] \ar[ur]^(0.3){q} & \R_r^\rho N, 
}
\end{equation}
where $p^{\sharp}$ and $q^{\sharp}$ denote the corresponding morphisms of $p$ and $q$ under the adjunction $\L_r^\rho\dashv \R_r^\rho$, 
and the horizontal arrows are the canonical maps arising from the (co)limit constructions.
Then $d_\rho(M,N)$ is defined as the infimum of $r\ge 0$ for which such a diagram exists.
We refer to $d_\rho$ as the height-interleaving distance with respect to $\rho$.
As we will discuss later, it need not be a genuine distance in general, since the triangle inequality may fail.

The aim of this paper is to develop height-interleaving distances as tools for analyzing functor categories $\Fun(P,\mathcal{C})$ and to study their basic properties.
Firstly, we show that our distance recovers the classical interleaving distance on $\Fun(\mathbb{R}^d,\mathcal{C})$. 
Indeed, when $P=\mathbb{R}^d$ and $\rho=\rho_{\mathrm{diag}}$, the functors $\R_r^{\rho}$ and $\L_r^{\rho}$ agree with the usual $r$-shift and $(-r)$-shift, so our interleaving diagrams coincide with the classical ones and hence so does the interleaving distance (Proposition~\ref{prop:recoverRd}).

Secondly, we establish two stability properties for our height-interleaving distance, allowing us to compare this quantity even across different indexing posets and different choices of height-difference functions. 
The pullback stability (Proposition~\ref{prop:pullback stability}) shows that height-interleavings are functorial with respect to pullbacks along order-preserving maps; in particular, the associated distance is non-increasing under pullback.
On the other hand, we prove the following functional stability result, asserting that small changes of the height-difference function lead to small changes of the associated height-interleaving distance. 
Here, $\delta(\rho_1,\rho_2)$ denotes the distortion between $\rho_1$ and $\rho_2$. 

\begin{theorem}[Theorem~\ref{thm:functional stability}]
\label{mainthm:functional stability}
The assignment $\rho\mapsto d_{\rho}$ is $1$-Lipschitz with respect to $\delta$.
That is, for any height-difference functions $\rho_1,\rho_2$ on $P$,
\begin{equation}
\mathrm{dist}_{\infty}(d_{\rho_1},d_{\rho_2}) \leq \delta(\rho_1,\rho_2).
\end{equation}
\end{theorem}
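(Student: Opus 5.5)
The statement is a comparison of interleaving thresholds, so the plan is to show that an $r$-interleaving for $\rho_1$ yields an $(r+\delta)$-interleaving for $\rho_2$, where $\delta=\delta(\rho_1,\rho_2)$, and then take infima. I take the distortion to be $\delta(\rho_1,\rho_2)=\sup_{x\le y}|\rho_1(x,y)-\rho_2(x,y)|$ over comparable pairs. The intended inequality is then
\[
d_{\rho_2}(M,N)\le d_{\rho_1}(M,N)+\delta
\]
for all $M,N$, together with the symmetric inequality. These two give $\mathrm{dist}_\infty(d_{\rho_1},d_{\rho_2})\le\delta$, with the usual conventions when a distance is $\infty$. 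If $\delta=\infty$ there is nothing to prove, so assume $\delta<\infty$.

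\textbf{Step 1: comparison of neighborhoods.} The lower and upper $r$-neighborhoods defining $\L^\rho_r$ and $\R^\rho_r$ are, I expect, $\{x\le p:\rho(x,p)\le r\}$ and $\{y\ge p:\rho(p,y)\le r\}$. If $\rho_2(x,p)\le r$, then $\rho_1(x,p)\le r+\delta$. Hence the $\rho_2$-neighborhood of radius $r$ is contained in the $\rho_1$-neighborhood of radius $r+\delta$, and vice versa. These inclusions of indexing subposets induce canonical natural transformations
\[
\L^{\rho_2}_{r}\to\L^{\rho_1}_{r+\delta},\qquad \R^{\rho_1}_{r+\delta}\to\R^{\rho_2}_{r},
\]
the first by comparison of colimits and the second by restriction of limits. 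They are compatible with the canonical maps $\L\to\id\to\R$, and the monotonicity maps $\R_s\to\R_t$ for $s\ge t$ (presumably already recorded in the paper) are of the same form.

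\textbf{Step 2: transport of interleavings.} Suppose $p\colon M\to\R^{\rho_1}_r N$ and $q\colon N\to\R^{\rho_1}_r M$ form an $r$-interleaving for $\rho_1$. I would rather pass to $\epsilon=r+\delta$ for $\rho_2$. The neighborhood of radius $r$ for $\rho_1$ lies inside the neighborhood of radius $r+\delta$ for $\rho_2$, which gives $\R^{\rho_2}_{r+\delta}\to\R^{\rho_1}_r$, the wrong direction for transporting $p$. So I would instead start from $\rho_1$-interleavings at parameter $r$ and define $\rho_2$-interleavings at $r+\delta$ via the map $\L^{\rho_2}_{r+\delta}\to\L^{\rho_1}_r$, which does not exist either. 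The correct direction is to express the interleaving diagram \eqref{eq:intro-interleaving} entirely in terms of the adjoint data $p^\sharp\colon\L_r M\to N$ and $p\colon M\to\R_r N$, together with the triangle conditions. One then uses the inclusion of the $\rho_2$-ball of radius $r'=r-\delta$ into the $\rho_1$-ball of radius $r$. Concretely, I would prove $d_{\rho_1}\le d_{\rho_2}+\delta$ as follows. Given a $\rho_2$-interleaving at $r$, compose with $\R^{\rho_2}_r\to\R^{\rho_1}_{r-\delta}$, which exists since the $\rho_1$-ball of radius $r-\delta$ lies inside the $\rho_2$-ball of radius $r$. This only gives the bound $d_{\rho_1}\le r-\delta$ if $\rho_1$-interleavings at radius $r-\delta$ follow. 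That is suspicious, so I would check the orientation carefully. Most likely the correct statement is that $\rho_2$-interleavings at $r$ yield $\rho_1$-interleavings at $r+\delta$, obtained by composing with $\R^{\rho_2}_r\to\R^{\rho_1}_{r+\delta}$, which requires the $\rho_1$-ball of radius $r+\delta$ to be contained in the $\rho_2$-ball of radius $r$. This holds only if the interleaving condition is measured by the complement, or if the convention is reversed. The first thing to do is pin down, from the paper's definitions, which inclusion gives a map in the needed direction. I expect the naturality of $\R^{\rho_1}_s\to\R^{\rho_2}_t$ whenever the balls are nested appropriately, with $|s-t|=\delta$, to be exactly what is needed.

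\textbf{Step 3: verifying the interleaving triangles.} This is the main obstacle. The diagram \eqref{eq:intro-interleaving} requires that the composite $\L_r M\xrightarrow{q^\sharp\circ\L_r p}\cdots\to\R_r M$ agree with the canonical map, and this map is defined via $\L_r\to\id\to\R_r$ rather than through a composite shift $\R_{2r}$. I would handle it pointwise. At each $x$, the new composite is computed by restricting cones along the comparison maps of Step 1. Equality then follows from the old identity by naturality of the comparison transformations together with their compatibility with the units and counits of $\L\dashv\R$. This compatibility is a mate-correspondence check: the comparison $\L^{\rho_2}\to\L^{\rho_1}$ is the mate of $\R^{\rho_1}\to\R^{\rho_2}$, so $(\,\cdot\,)^\sharp$ commutes with transport. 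Once this is done, taking infima over $r$ and using symmetry in $\rho_1$ and $\rho_2$ gives the theorem.
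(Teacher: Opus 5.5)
Your overall plan is the same as the paper's, which reuses the construction of Proposition~\ref{prop:monotonicity-rho}. You transport an interleaving by composing with natural transformations induced by inclusions of indexing sets, verify $q\circ p^{\sharp}=e_{r,M}$ and $p\circ q^{\sharp}=e_{r,N}$ via compatibility with $\eta^{\L},\eta^{\R}$ and the adjunction, then take infima and symmetrize.

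However, the proposal never carries out the one step with real content: producing a comparison map in the right direction. You leave it open ("the first thing to do is pin down \dots"), and Step~1 rests on a wrong guess of the definitions. In the paper, $a^{\downarrow_r^{\rho}}=\{x\le a\mid \rho(x,a)\ge r\}$ and $a^{\uparrow_r^{\rho}}=\{x\ge a\mid \rho(a,x)\ge r\}$ are superlevel sets, which shrink as $r$ grows. Your sets $\{x\le p\mid \rho(x,p)\le r\}$ always contain $p$ as their maximum. The colimit would then just be $M(p)$, so $\L_r$ and $\R_r$ would be isomorphic to the identity. This is why every inclusion you tried pointed the wrong way. It is also why the Step~1 maps $\L^{\rho_2}_r\Rightarrow\L^{\rho_1}_{r+\delta}$ and $\R^{\rho_1}_{r+\delta}\Rightarrow\R^{\rho_2}_r$ do not exist in general.

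With the correct definition, the missing step is short. Put $\delta:=\delta(\rho_1,\rho_2)<\infty$. Then $\rho_2\ge\rho_1-\delta$ on comparable pairs, and $\rho_1=\infty$ exactly when $\rho_2=\infty$. Hence $\rho_1(a,x)\ge s+\delta$ implies $\rho_2(a,x)\ge s$, so $a^{\uparrow^{\rho_1}_{s+\delta}}\subseteq a^{\uparrow^{\rho_2}_{s}}$, and likewise $a^{\downarrow^{\rho_1}_{s+\delta}}\subseteq a^{\downarrow^{\rho_2}_{s}}$.

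Restriction of limits and comparison of colimits then give $\theta^{\R}\colon \R^{\rho_2}_s\Rightarrow\R^{\rho_1}_{s+\delta}$ and $\theta^{\L}\colon\L^{\rho_1}_{s+\delta}\Rightarrow\L^{\rho_2}_s$. They satisfy $\theta^{\R}\circ\eta^{\R,\rho_2}_{s}=\eta^{\R,\rho_1}_{s+\delta}$ and $\eta^{\L,\rho_2}_{s}\circ\theta^{\L}=\eta^{\L,\rho_1}_{s+\delta}$, since all these maps are induced by inclusions into $a^{\uparrow}$ or $a^{\downarrow}$.

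Now take a $\rho_2$-interleaving $(p,q)$ at level $s$ and set $p':=\theta^{\R}_N\circ p$ and $q':=\theta^{\R}_M\circ q$. The cocone description of $(-)^{\sharp}$ gives $(p')^{\sharp}=p^{\sharp}\circ\theta^{\L}_M$. Therefore $q'\circ (p')^{\sharp}=\theta^{\R}_M\circ e^{\rho_2}_{s,M}\circ\theta^{\L}_M=e^{\rho_1}_{s+\delta,M}$, and symmetrically for $N$. This yields $d_{\rho_1}\le d_{\rho_2}+\delta$, and swapping $\rho_1$ and $\rho_2$ finishes the proof.

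Your hedged "most likely" statement in Step~2 is exactly this. What was missing is the observation that, for superlevel sets, $\rho_2\ge\rho_1-\delta$ gives precisely the inclusion you needed.
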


Thirdly, we discuss the triangle inequality for $d_\rho$. 
In our framework, the triangle inequality may fail because composing an $r$-height-interleaving with an $s$-height-interleaving does not necessarily produce an $(r+s)$-height-interleaving: the required comparison maps between the underlying (co)limit constructions need not exist canonically.
To address this, we work with additively relaxed triangle inequalities, allowing an additive defect.
We relate the existence of the required comparison maps to the connected intersection property (CIP) for $(P,\rho)$ (see Definition~\ref{def:CIP}), which provides a canonical way to construct the corresponding natural transformations.
This enables us to compose $r$- and $s$-height-interleavings to obtain an $(r+s+c)$-height-interleaving, where the additive defect $c$ is controlled by the constant $c(\rho)$.
We thus obtain the following relaxed triangle inequality for $d_\rho$.

\begin{theorem}[Theorem~\ref{thm:relaxed TI}]
\label{mainthm:CIP}
If $(P,\rho)$ has the connected intersection property, 
then $d_{\rho}$ satisfies an additively $c(\rho)$-relaxed triangle inequality.  
In particular, if $c(\rho)=0$, then $d_{\rho}$ defines an extended pseudo-distance. 
\end{theorem}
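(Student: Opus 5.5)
We have to guess the definitions. Presumably $\L_r^\rho M(p)=\colim_{q\le p,\ \rho(q,p)\le r} M(q)$ and $\R_r^\rho M(p)=\lim_{q\ge p,\ \rho(p,q)\le r} M(q)$. The index sets of these (co)limits are the lower and upper $r$-neighbourhoods of $p$. CIP should say that intersections of the relevant neighbourhoods are connected. Presumably $c(\rho)$ measures the failure of $\rho(a,c)\le \rho(a,b)+\rho(b,c)$, i.e. the failure of subadditivity.

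The plan is to compose interleavings. Suppose $M,N$ are $r$-height-interleaved via $p\colon M\to \R_r N$ and $q\colon N\to\R_r M$. Suppose $N,K$ are $s$-height-interleaved via $p'\colon N\to\R_s K$ and $q'\colon K\to \R_s N$. Put $t=r+s+c$ with $c=c(\rho)$. The first key step is to construct a natural comparison transformation $\mu\colon \R_r\R_s\Rightarrow \R_t$, or dually $\L_t\Rightarrow \L_r\L_s$; by the adjunction of Proposition~\ref{prop:adjoint LrRr} these are mates, so constructing one gives the other. The candidate component of $\mu$ at $x$ is a map
\[
\lim_{y\in U_r(x)}\ \lim_{z\in U_s(y)} X(z)\;\longrightarrow\; \lim_{w\in U_t(x)} X(w).
\]
To define it, for each $w\in U_t(x)$ we choose some $y$ with $x\le y\le w$, $\rho(x,y)\le r$ and $\rho(y,w)\le s$, and project onto $X(w)$. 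The constant $c(\rho)$ is exactly what should guarantee that such a $y$ exists: we split the chain from $x$ to $w$ using the superadditivity of $\rho$ along chains together with the defect bound.

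The second key step is independence of the choice of $y$, and this is where CIP enters. The set of admissible $y$ for a fixed pair $(x,w)$ is an intersection of an upper neighbourhood of $x$ and a lower neighbourhood of $w$. Assume CIP makes this set connected. Any two admissible choices are then joined by a zigzag $y\le y'$, and along each step the two projections agree by the naturality of the cone maps. Hence $\mu_x$ is well defined, natural in $x$ and in $X$, and a cone over $U_t(x)$. We must also check compatibility with the units: composing the unit $X\to \R_r X$, then $\R_r$ applied to the unit $X\to\R_s X$, then $\mu$, should give the unit $X\to \R_t X$. Since every component is a canonical projection, this holds.

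Finally, define $P\colon M\to \R_t K$ as the composite
\[
M\xrightarrow{p}\R_r N\xrightarrow{\R_r p'}\R_r\R_s K\xrightarrow{\mu}\R_t K,
\]
and define $Q\colon K\to\R_t M$ symmetrically. We then verify the interleaving triangles in diagram~\eqref{eq:intro-interleaving}, which amount to: $Q^\sharp\circ P$ equals the canonical map $\L_t M\to M\to\R_t M$ (in adjoint form), and similarly with the roles of $M$ and $K$ exchanged. The mate of a composite is computed via $\L_t\Rightarrow\L_s\L_r$, so this reduces to the given identities for $(p,q)$ and $(p',q')$ together with the unit compatibility of $\mu$ and naturality. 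Taking the infimum over $r$ and $s$ gives $d_\rho(M,K)\le d_\rho(M,N)+d_\rho(N,K)+c(\rho)$. When $c(\rho)=0$ this is the triangle inequality; together with symmetry and $d_\rho(M,M)=0$ (witnessed by the canonical maps), this shows that $d_\rho$ is an extended pseudo-distance.

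The main obstacle is the well-definedness of $\mu$, i.e. showing that CIP yields exactly the connectedness needed, with the right neighbourhood radii. I will first test this on tree posets, where the admissible intervals are chains. A further obstacle is the bookkeeping in the final diagram chase, where mixed $\L$ and $\R$ terms must be handled consistently through the mate correspondence.
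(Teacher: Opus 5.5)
Your overall architecture is the paper's: build a comparison $\R_r\R_s\Rightarrow\R_{r+s+c}$ (the mate of a map $\L_{r+s+c}\Rightarrow\L_s\L_r$) that is compatible with the units, then compose interleavings through it. The gap is the step that carries all the content of $c(\rho)$, namely the existence of the splitting point $y$. With your reading of $c(\rho)$ this step is not justified, and it cannot be.

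First, your neighbourhood convention is reversed. The paper uses $a^{\uparrow_r}=\{x\ge a\mid \rho(a,x)\ge r\}$ and $a^{\downarrow_r}=\{x\le a\mid \rho(x,a)\ge r\}$, i.e. points \emph{at least} $r$ away, as for the shifts $M(\pm r)$ on $\mathbb{R}^d$. With $\rho\le r$, the point itself is the maximum (resp. minimum) of its neighbourhood, so $\L_r\cong\id\cong\R_r$ and the statement becomes vacuous.

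With the correct convention, you need for every $w\in x^{\uparrow_{r+s+c}}$ some $y\in[x,w]$ with $\rho(x,y)\ge r$ and $\rho(y,w)\ge s$. Superadditivity only gives the upper bound $\rho(x,y)+\rho(y,w)\le\rho(x,w)$. It says nothing about whether $[x,w]$ contains a point in the middle, and a bound on the failure of subadditivity is irrelevant here.

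Example~(2) shows this concretely. For a height function, $\rho_\phi$ is exactly additive, so your defect is $0$. Yet on the four-point chain (which has CIP) one has $d_\phi(M,X)=d_\phi(X,N)=0$ and $d_\phi(M,N)=C$. The splitting fails because $b\in a^{\uparrow_{2\epsilon}}$ for small $\epsilon$, while $[a,b]=\{a,b\}$ contains no $y$ with $\rho(a,y)\ge\epsilon$ and $\rho(y,b)\ge\epsilon$. Conversely, $\rho_{\mathrm{diag}}$ on $\mathbb{R}^2$ has unbounded subadditivity defect (take $(0,0)\le(n,0)\le(n,n)$), yet $d_{\rho_{\mathrm{diag}}}$ is an extended pseudo-distance.

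The missing idea is that $c(\rho)$ is an approximate intermediate-value constant. Condition (IV$_c$) asks that for $a\le b$ and $0\le t\le\rho(a,b)$ there is $z\in[a,b]$ with $|\rho(a,z)-t|_\infty\le c/2$ and $|\rho(z,b)-(\rho(a,b)-t)|_\infty\le c/2$. The constant $c(\rho)$ is the infimum of such $c$. Applying (IV$_c$) to $x\le w$ with $t=r+c/2$ gives exactly the needed $y$, including when $\rho(x,w)=\infty$. This is the inclusion $x^{\uparrow_{r+s+c}}\subseteq\bigcup_{y\in x^{\uparrow_r}}y^{\uparrow_s}$ of Lemma~\ref{lem:theta_plusc}. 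Since (IV$_{c(\rho)}$) itself need not hold, you must run the argument for each $c>c(\rho)$ and then take the infimum.

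Once this inclusion is in place, the rest of your plan goes through:
\begin{itemize}
\item Your admissible set $x^{\uparrow_r}\cap w^{\downarrow_s}$ is the paper's $I_{s,r}(w,x)$. It is non-empty by the inclusion, hence connected by CIP.
\item Your zigzag independence-of-choice argument is a valid elementwise substitute for the paper's finality argument that $\kappa$ is an isomorphism. Your map is $\theta^{\R}_{r,s}\circ(\kappa^{\R}_{r,s})^{-1}$.
\item Your unit compatibility suffices for the final diagram chase, provided you impose it on both $\mu_{r,s}$ and $\mu_{s,r}$ and transport it to the $\L$-side by mates.
\end{itemize}
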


For instance, CIP holds for any height-difference function on diamond-free posets, including zigzag posets with arbitrary orientation and tree posets (Proposition~\ref{prop:tree-CIP}), while it can already fail on the four-element diamond poset for a natural choice of $\rho$.
Combining functional stability (Theorem~\ref{mainthm:functional stability}) with Theorem~\ref{mainthm:CIP} leads to the following consequence: once a relaxed triangle inequality is established via CIP for a given height-difference function, it automatically transfers to other height-difference functions up to an additive defect controlled by distortion.

Finally, we specialize to the case $\mathcal{C}=\Vect_k$, the category of vector spaces over a field $k$, and study the relationship between our construction and classical translation-based interleavings. 
In particular, we show analogues of a few basic properties from the classical setting, including a systematic construction of $r$-height-interleavings from suitable subquotients of a given persistence module via erosion neighborhoods (Theorem~\ref{thm:erosion neighborhood}), 
as well as a comparison between the height-interleaving distance $d_{\rho}$ and the erosion neighborhood distance $d_{\rho\text{-EN}}$ (Theorem~\ref{thm:EN-summary}).

Taken together, these results establish height-interleaving distances as a framework for analyzing poset-indexed functor categories.

\medskip
\noindent\emph{Contributions and perspective.}
Our framework replaces translation structures by a nonnegative input on comparable pairs: a height-difference function $\rho$.
From $\rho$ we systematically construct an $\mathbb{R}_{\geq 0}$-indexed adjoint family of endofunctors on $\Fun(P,\mathcal{C})$ 
and define the associated height-interleaving distance.
A key point is that our approach does not require a rich translation structure on $P$, in contrast to classical translation-based interleavings.
Indeed, height functions provide a standard source of such inputs; in particular, the framework applies to finite posets, which naturally arise as indexing categories in the representation theory of finite dimensional algebras.

From the viewpoint of topological data analysis, our framework provides an interleaving-type quantification for persistence modules indexed by posets.
Under suitable hypotheses on $(P,\rho)$ (e.g.\ CIP), this yields a relaxed triangle inequality and hence a distance-like comparison. In particular, it applies to persistence modules indexed by zigzag and tree posets, and more generally by diamond-free posets.

\medskip
\noindent\emph{Related work.}
Translation-based interleavings are now formulated in abstract categorical settings, for instance via categories with a flow \cite{dSMS18} and, more generally, $2$-categorical settings \cite{MN26}.
In these frameworks, the standard input is a lax monoidal $[0,\infty)$-action on a category $\mathcal{X}$ (equivalently, a lax monoidal morphism $[0,\infty)\to \End(\mathcal{X})$), which encodes shift operations and naturally induces an interleaving distance on $\mathcal{X}$ as an extended pseudo-distance.
In contrast, our framework does not assume such an action on $\mathcal{X}=\Fun(P,\mathcal{C})$; instead, it takes as input a height-difference function $\rho$ on $P$ and constructs interleaving diagrams built from the adjunction $\L_r^\rho\dashv \R_r^\rho$.
We also note that our constructions of $\L_r^\rho$ and $\R_r^\rho$ are naturally \emph{oplax} in the monoidal direction (see \eqref{eq:muLR} in Section~\ref{sec:latching_matching}).

Diamond-free posets appear naturally in TDA and related areas; this includes tree posets, zigzag posets, and the indexing posets arising in circle-valued persistence.
In \cite{BBS24}, the authors study $0$-dimensional persistent homology over rooted tree posets and establish finite-type results.
For zigzag persistence modules, \cite{BL18} proved an algebraic stability theorem by embedding them into the $2$-parameter setting.
From an algebraic viewpoint, \cite{HIY22} defines and computes distances on the bounded derived category of zigzag persistence modules via Auslander--Reiten theory, and proves an algebraic stability theorem. 
For circle-valued persistence, the continuous cyclic setting has been studied in \cite{HR24,RZ23}, together with related isometry results for continuous quivers of type $\widetilde{A}$ \cite{GZ25}.
For persistence modules over finite non-cyclic orientations of $\widetilde{A}$, \cite{BP25} defines interleavings between persistence modules using Auslander--Reiten translation and proves the corresponding isometry theorem.

Motivated by the instability of decompositions in multiparameter persistence, Bjerkevik recently introduced $\epsilon$-refinements and an operation called pruning as tools for studying approximate decompositions \cite{Bjerkevik25}.
As part of the underlying vocabulary, $\epsilon$-erosion neighborhoods of a persistence module and an associated erosion neighborhood distance were also introduced, together with comparisons to the classical interleaving distance \cite[Theorems~3.13 and 4.7]{Bjerkevik25}.
Our results in Section~\ref{sec:PM} are inspired by these ideas and develop analogous constructions for erosion neighborhoods within the height-interleaving framework.

\medskip\noindent\textbf{Organization.}
This paper is organized as follows.
In Section~\ref{sec:prelim}, we recall basic notions on poset-indexed functor categories and classical translation-based interleavings.
In Section~\ref{sec:height_interleaving}, we introduce height-difference functions and construct the adjoint endofunctors $\L_r^{\rho}\dashv \R_r^{\rho}$, which lead to height-interleavings and the associated distance $d_{\rho}$.
We also explain how this framework recovers the classical interleaving distance on $\mathbb{R}^d$ and present illustrative examples.
In Section~\ref{sec:stability}, we prove two stability properties of $d_{\rho}$, namely pullback stability and functional stability (Theorem~\ref{mainthm:functional stability}).
In Section~\ref{sec:distances}, we investigate $d_{\rho}$ from a distance-theoretic viewpoint.
In particular, we introduce the connected intersections property (CIP) and prove that it yields a relaxed triangle inequality for $d_{\rho}$ with an additive defect controlled by a constant $c(\rho)$ (Theorem~\ref{mainthm:CIP}).
Moreover, we discuss how relaxed triangle inequalities behave under restriction to full subposets via Galois insertions.
In Section~\ref{sec:PM}, we specialize to $\mathcal{C}=\Vect_k$ and develop erosion-type constructions, including erosion neighborhoods and the associated distance.
We conclude with a brief discussion and perspectives in Section~\ref{sec:discussion}.
The appendix collects basic material on adjunctions (mates correspondence) and on limits and colimits.

\medskip
\noindent{\bf Notation.}
Throughout this paper, let $\mathcal{C}$ be a complete and cocomplete category.
The reader may keep in mind the case $\mathcal{C} = \Vect_k$, the category of vector spaces over a field $k$.
For a small category $\mathcal{J}$, we write $\Fun(\mathcal{J},\mathcal{C})$ for the category of covariant functors $\mathcal{J}\to\mathcal{C}$ and natural transformations between them.
For a full subcategory $\mathcal{I}\subseteq\mathcal{J}$ and  $M\in \Fun(\mathcal{J},\mathcal{C})$, we denote its restriction by $M|_{\mathcal{I}}$.
We use standard notation for limits and colimits in $\mathcal{C}$, see Appendix~\ref{appendix:limits-colimits} for details.
In particular, we write $\Delta\colon\mathcal{C}\to\Fun(\mathcal{J},\mathcal{C})$ for the constant diagram functor and use the adjunctions
$\colim \dashv \Delta \dashv \lim$.

\section{Preliminaries}
\label{sec:prelim}

\subsection{Poset-indexed functor categories}
\label{sec:prelim posets}
We regard a poset $P$ as a category whose objects are elements of $P$ and there is a unique morphism $a\to b$ for $a,b\in P$ whenever $a\leq b$ in $P$. 
We denote by $a^{\uparrow}$ (resp., $a^{\downarrow}$) the set of all $x\in P$ such that $a \leq x$ (resp., $x \leq a$). 
An order-preserving map $f\colon Q \to P$ is a map satisfying $f(x)\leq f(y)$ for all $x\leq y$ in $Q$. 
It is called order-embedding if moreover $f(x)\leq f(y)$ implies $x\leq y$, in which case $Q$ is isomorphic to the image of $f$ as posets. Notice that any order-preserving map is a functor between posets.

We consider the functor category $\Fun(P,\mathcal{C})$. 
In this setting, a functor $M\colon P \to \mathcal{C}$ consists of the following data:  
\begin{itemize}
    \item It assigns to an element $a\in P$ an object $M(a)$ in $\mathcal{C}$.
    \item It assigns to a relation $a\leq b$ in $P$ a morphism $M(a\leq b) \colon M(a) \to M(b)$ in $\mathcal{C}$ such that $M(a\leq a) = \id_{M(a)}$ and $M(a\leq c) = M(b\leq c)\circ M(a\leq b)$ for all $a\leq b \leq c$ in $P$.  
\end{itemize}
A morphism $f\colon M\to N$ in $\Fun(P,\mathcal{C})$ is a natural transformation, that is, a family of morphisms $(f(a) \colon M(a) \to N(a))_{a\in P}$ in $\mathcal{C}$ satisfying $N(a\leq b) \circ f(a) = f(b)\circ M(a\leq b)$ for all $a\leq b$. 
%In the case of $\mathcal{C} = \Vect_k$ the category of vector spaces over a field $k$, the functor category $\Fun(P,\Vect_k)$ is known as the category of \emph{$P$-persistence modules} and denoted by $\Rep_k(P)$. 

For an order-preserving map $f\colon Q \to P$, 
we denote by $f^* \colon \Fun(P,\mathcal{C}) \to \Fun(Q,\mathcal{C})$ the functor given by precomposition with $f$.
Explicitly, for $M \in \Fun(P,\mathcal{C})$, we set
\begin{equation}
(f^*M)(x) := M(f(x)), \qand
(f^*M)(x \le y) := M(f(x) \le f(y)).
\end{equation}
The functor $f^*$ is called the pullback functor along $f$.

\subsection{Background on interleavings from translations of posets} 
\label{sec:prelim translations}

A \emph{translation} on a poset $P$ is an order-preserving map $\Lambda \colon P\to P$ equipped with a natural transformation $\id_P \Rightarrow \Lambda$, that is, such that $x\leq \Lambda(x)$ holds for all $x\in P$. 
This yields a natural transformation $\eta_{\Lambda} \colon \id \Rightarrow \Lambda^*$.
In particular, for each $M\in \Fun(P,\mathcal{C})$, we have a morphism $\eta_{\Lambda,M} \colon M\to \Lambda^*M$ given by $\eta_{\Lambda,M}(x) := M(x\leq \Lambda(x))$ for all $x \in P$.
Notice that $\Lambda^*(\eta_{\Lambda,M}) = \eta_{\Lambda,\Lambda^*M}$ follows from the definition. 

\begin{definition}
    For two $M,N\in \Fun(P,\mathcal{C})$, we say that $M$ and $N$ are \emph{$\Lambda$-interleaved} if there are morphisms $f \colon M\to \Lambda^*N$ and $g \colon N \to \Lambda^*M$ such that the following diagram commutes. 
    \begin{equation}
        \xymatrix@C=40pt@R=26pt{
        M \ar[r]^{\eta_{\Lambda,M}}
        \ar[dr]^(0.4){f} & \Lambda^*M \ar[r]^{\Lambda^*(\eta_{\Lambda,M})} \ar[dr]^(0.4){\Lambda^*f} & \Lambda^*(\Lambda^*M)\\ 
        N \ar[r]_{\eta_{\Lambda,N}} \ar[ur]^(0.3){g} 
        & \Lambda^*N \ar[r]_{\Lambda^*(\eta_{\Lambda,N})} \ar[ur]^(0.3){\Lambda^*g} & \Lambda^*(\Lambda^*N). 
        }
    \end{equation}  
\end{definition}

A \emph{superlinear family of translations} $\Omega$ on $P$ is a family of translations
$(\Omega_r \colon P \to P)_{r\geq 0}$ equipped with natural transformations
$\Omega_{s}\Omega_{r} \Rightarrow \Omega_{s+r}$ for all $s,r \geq 0$.
The interleaving distance between $M$ and $N$ with respect to $\Omega$ is defined by
\begin{equation}
    d_{\Omega}(M,N) := \inf\{r\geq 0 \mid \text{$M$ and $N$ are $\Omega_r$-interleaved} \},
\end{equation}
where we set $d_{\Omega}(M,N)=\infty$ if there exists no $r\geq 0$ for which they are $\Omega_r$-interleaved.
It is known (see \cite{dSMS18} for example) that $d_{\Omega}$ defines an extended pseudo-distance on $\Fun(P,\mathcal{C})$.
Namely, it satisfies symmetry (i.e., $d_{\Omega}(M,N) = d_{\Omega}(N,M)$), vanishing on the diagonal
(i.e., $d_{\Omega}(M,M) = 0$), and the triangle inequality
(i.e., $d_{\Omega}(M,N) \leq d_{\Omega}(M,X) + d_{\Omega}(X,N)$).

We call $\Omega$ \emph{strong} if 
each $\Omega_r$ is an isomorphism, with inverse $\Omega_{-r}$,
and the structure maps $\Omega_s\Omega_r \Rightarrow \Omega_{s+r}$ are natural isomorphisms.
In this case, we have $\Omega_{-r}(x)\leq x$ for all $x$, that is, a natural transformation
$\Omega_{-r}\Rightarrow \id_P$, which yields a natural transformation
$\eta_{\Omega_{-r}}\colon \Omega_{-r}^*\Rightarrow \id$. 
With this notation, an $\Omega_r$-interleaving between $M$ and $N$ can then be expressed as follows:
\begin{equation}
\xymatrix@C=45pt@R=26pt{
\Omega_{-r}^*M \ar[r]^-{\eta_{\Omega_{-r},M}}
\ar[dr]^(0.4){\Omega_{-r}^*f} &
M \ar[r]^-{\eta_{\Omega_{r},M}} \ar[dr]^(0.4){f} &
\Omega_r^* M \\
\Omega_{-r}^*N \ar[r]_-{\eta_{\Omega_{-r},N}} \ar[ur]^(0.3){\Omega_{-r}^*g} &
N \ar[r]_-{\eta_{\Omega_r,N}} \ar[ur]^(0.3){g} &
\Omega_r^* N.
}
\end{equation}

\bigskip
Now, we briefly recall the classical notion of interleavings on $\mathbb{R}^d$. 
We regard $\mathbb{R}^d$ as a poset equipped with the product order. 
For a given real number $r\in \mathbb{R}$, we denote by $\vec{r}=(r,\ldots,r)\in \mathbb{R}^d$ the diagonal vector whose coordinates are all equal to $r$. 
Then the assignment $a\mapsto a+\vec{r}$ defines a translation on $\mathbb{R}^d$. 
This map induces an endofunctor on $\Fun(\mathbb{R}^d,\mathcal{C})$ as the pullback, called the \emph{$r$-shift functor}. 
Explicitly, for $M\in \Fun(\mathbb{R}^d,\mathcal{C})$, its $r$-shift $M(r)$ is given by 
\begin{equation}
    (M(r))(a) := M(a + \vec{r}) \qand 
    (M(r))(a\leq b) := M(a + \vec{r} \leq b + \vec{r}).
\end{equation}

The family of $r$-shifts with $r\geq 0$ forms a strong superlinear family of translations on $\mathbb{R}^d$. 
Accordingly, we say that two functors $M,N\in \Fun(\mathbb{R}^d,\mathcal{C})$ are $r$-interleaved if they are interleaved with respect to the $r$-shift functor. 
The associated interleaving distance is denoted by 
\begin{equation} \label{eq:Rd interleaving}
    d_I(M,N) := \inf\{r\geq 0 \mid \text{$M$ and $N$ are $r$-interleaved}\}. 
\end{equation}

\section{Interleaving distances induced by height-difference functions}
\label{sec:height_interleaving}

In this section, we introduce interleaving distances induced by \emph{height-difference functions}. 
A key ingredient is an $\mathbb{R}_{\geq 0}$-indexed family of adjoint endofunctors on the functor category $\Fun(P,\mathcal{C})$, defined via latching- and matching-type operations.

\subsection{Latching and matching functors induced by height-difference functions}
\label{sec:latching_matching}

Our motivating setting is a height function on $P$.

\begin{definition}\label{def:height function}
A \emph{height function} on a poset $P$ is an order-preserving map $\phi\colon P\to\mathbb{R}$.
\end{definition}

Given such a $\phi$, we may consider the associated height-difference on comparable pairs,
\begin{equation}\label{eq:rho-phi}
\rho_\phi(a,b):=\phi(b)-\phi(a)\qquad(a\leq b).
\end{equation}

In what follows, we introduce an axiomatic notion of a height-difference function on a poset.

\begin{definition}\label{def:height-difference}
Let $P$ be a poset. A \emph{height-difference function} on $P$ is a function
\[
\rho:\{(a,b)\in P\times P\mid a\leq b\}\to [0,\infty]
\]
such that $\rho(a,a)=0$ for all $a\in P$, and satisfying the following superadditivity condition along chains:
\begin{equation}\label{eq:superadditive}
\rho(a,c)\geq \rho(a,b)+\rho(b,c)
\end{equation}
for every $a\leq b\leq c$ in $P$.
\end{definition}
Any height function $\phi$ defines a height-difference function $\rho_\phi$ as in \eqref{eq:rho-phi}. 
Hence,
\[
\{\rho_\phi\mid \phi \text{ a height function}\}\subseteq \{\rho \mid \rho \text{ a height-difference function}\}.
\]

Let $\rho$ be a height-difference function on $P$.
For $a\in P$ and $r \geq 0$, we set
\begin{equation}\label{eq:updown-rho}
a^{\downarrow_{r}^{\rho}} := \{x\in P \mid x \leq a,\ \rho(x,a)\geq r \}
\qand 
a^{\uparrow_{r}^{\rho}} := \{x\in P \mid x \geq a,\ \rho(a,x)\geq r \}.
\end{equation}
We omit $\rho$ from the notation when it is clear from the context. 

\begin{lemma}\label{lem:updownarrows}
The following hold.
\begin{enumerate}[\rm (1)]
    \item We have $a^{\downarrow_0} = a^{\downarrow}$ and $a^{\uparrow_0} = a^{\uparrow}$.
    \item For any $a \leq b$ in $P$, we have $a^{\downarrow_r} \subseteq b^{\downarrow_r}$ and 
    $b^{\uparrow_r} \subseteq a^{\uparrow_r}$.
    \item For any $s \geq r$, 
    we have $a^{\downarrow_s} \subseteq a^{\downarrow_r}$ and $a^{\uparrow_s} \subseteq a^{\uparrow_r}$.
    \item We have $b\in a^{\downarrow_r}$ if and only if $a \in b^{\uparrow_r}$.
\end{enumerate}
\end{lemma}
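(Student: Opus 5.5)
Each of the four items follows directly from the definitions \eqref{eq:updown-rho} together with the two axioms of Definition~\ref{def:height-difference}: nonnegativity of $\rho$ and superadditivity \eqref{eq:superadditive}. I would treat the items in order, proving the statements for $\downarrow_r$ and obtaining the $\uparrow_r$ versions by the symmetric argument.

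For (1), note that $\rho$ takes values in $[0,\infty]$. Hence the condition $\rho(x,a)\ge 0$ holds automatically for every $x\le a$, and so $a^{\downarrow_0}=\{x\mid x\le a\}=a^{\downarrow}$. The same argument gives $a^{\uparrow_0}=a^{\uparrow}$.

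For (3), if $s\ge r$ and $\rho(x,a)\ge s$, then $\rho(x,a)\ge r$, which gives the inclusion immediately. Item (4) is a pure unwinding of definitions. Indeed, $b\in a^{\downarrow_r}$ means $b\le a$ and $\rho(b,a)\ge r$, and $a\in b^{\uparrow_r}$ means $a\ge b$ and $\rho(b,a)\ge r$; these are the same conditions.

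Item (2) is the only place where superadditivity enters, and it is the one step requiring any thought. Let $a\le b$ and $x\in a^{\downarrow_r}$, so that $x\le a$ and $\rho(x,a)\ge r$. By transitivity, $x\le b$. Applying \eqref{eq:superadditive} to the chain $x\le a\le b$ gives
\[
\rho(x,b)\ge \rho(x,a)+\rho(a,b)\ge \rho(x,a)\ge r.
\]
The second inequality uses $\rho(a,b)\ge 0$; the computation is valid in $[0,\infty]$ even when values are infinite. Hence $x\in b^{\downarrow_r}$. Dually, for $y\in b^{\uparrow_r}$ we have $a\le b\le y$, and the chain $a\le b\le y$ gives $\rho(a,y)\ge\rho(a,b)+\rho(b,y)\ge r$, so $y\in a^{\uparrow_r}$.
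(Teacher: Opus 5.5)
Your proof is correct, and it is the same direct unwinding of the definitions that the paper intends; the paper's proof just says the lemma is straightforward from the definition. You also correctly note that only item (2) needs superadditivity together with nonnegativity of $\rho$, and that the argument remains valid when $\rho$ takes the value $\infty$.
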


\begin{proof}
It is straightforward from the definition.
\end{proof}

For a given functor $M\colon P \to \mathcal{C}$ and $a\in P$, 
let
\begin{equation}
    \L_r^{\rho} M(a) := \colim M|_{a^{\downarrow_r}}.  
\end{equation}
We write $\alpha_{a,M} \colon M|_{a^{\downarrow_r}} \to \Delta(\L_r^{\rho}M(a))$ for the unit of $\colim \dashv \Delta$ (see Notation). 
In addition, for $a\leq b$ in $P$, the universal property of the colimit yields a unique morphism  
\begin{equation}
    \L_r^{\rho} M(a\leq b) := \big(\colim M|_{a^{\downarrow_r}}  \to \colim M|_{b^{\downarrow_r}} \big)
\end{equation} 
by $a^{\downarrow_r} \subseteq b^{\downarrow_r}$ as in Lemma~\ref{lem:updownarrows}(2). 
Then, they assemble into a functor $\L_r^\rho M\colon P\to\mathcal C$. 

On the other hand, for a morphism $f\colon M \to N$ in $\Fun(P,\mathcal{C})$, 
we have a morphism $\L_r^{\rho} f\colon \L_r^{\rho} M \to \L_r^{\rho} N$, where the component $(\L_r^{\rho}f)(a)$ for $a\in P$ is defined as a unique map 
$\colim M|_{a^{\downarrow_r}} \to \colim N|_{a^{\downarrow_r}}$ given by the universal property of colimits. 
Indeed, it follows from the fact that the family $\big(M(x) \xrightarrow{f(x)} N(x) \xrightarrow{\alpha_{a,N}(x)} \L_r^{\rho}N(a)\big)_{x\in a^{\downarrow_r}}$ forms a cocone over the diagram $M|_{a^{\downarrow_r}}$.  
\begin{equation}
    \xymatrix@C=44pt@R=26pt{
    \L_r^{\rho} M (a) \ar@{.>}[r]^-{(\L_r^{\rho}f)(a)} \ar@{}[rd]|{\circlearrowleft} & \L_r^{\rho}N(a) \\ 
    M(x) \ar[r]^-{f(x)} \ar[u]^{\alpha_{a,M}(x)} & N(x) \ar[u]^{\alpha_{a,N}(x)}. 
    }
\end{equation}
This construction is natural in $a$, so the maps $(\L_r^\rho f)(a)$ give rise to a morphism $\L_r^\rho f\colon \L_r^\rho M\rightarrow \L_r^\rho N$. 

Finally, since the assignment $M\mapsto \L_r^\rho M$ is functorial, it defines an endofunctor $\L_r^\rho\colon \Fun(P,\mathcal C)\to \Fun(P,\mathcal{C})$.

Dually, we obtain an endofunctor $\R_r^{\rho}\colon \Fun(P,\mathcal{C}) \to \Fun(P,\mathcal{C})$ by using limits. 
More explicitly, for $M\in \Fun(P,\mathcal{C})$ and $a\in P$, we set 
\begin{equation}
    \R_r^{\rho} M(a) := \lim M|_{a^{\uparrow_r}}  
\end{equation}
and write $\beta_{a,M} \colon \Delta(\R_r^{\rho}M(a)) \to M|_{a^{\uparrow_r}}$ for the counit of $\Delta\dashv \lim$. 
For $a\leq b$ in $P$, the morphism 
\begin{equation}
    \R_r^{\rho}M(a\leq b) := \big( \lim M|_{a^{\uparrow_r}} \to \lim M|_{b^{\uparrow_r}} \big)
\end{equation}  
is the unique morphism induced by the universal property of limits, using the inclusion $b^{\uparrow_r} \subseteq a^{\uparrow_r}$.
For a morphism $g\colon M \to N$ in $\Fun(P,\mathcal{C})$, 
we define a morphism $\R_r^{\rho} g\colon \R_r^{\rho} M \to \R_r^{\rho} N$ as follows: 
for each $a\in P$, the family of morphisms 
$\big(\R_r^{\rho}M(a) \xrightarrow{\beta_{a,M}(y)} M(y) \xrightarrow{g(y)} N(y)\big)_{y\in a^{\uparrow_r}}$ forms a cone over the diagram $N|_{a^{\uparrow_r}}$, and hence induces a unique morphism $(\R_r^{\rho}g)(a)\colon \R_r^{\rho} M(a) \to \R_r^{\rho}N(a)$. 
\begin{equation}
    \xymatrix@C=44pt@R=26pt{
    M (y) \ar@{->}[r]^-{g(y)} \ar@{}[rd]|{\circlearrowleft} & N(y) \\ 
    \R_r^{\rho} M(a) \ar@{.>}[r]^-{(\R_r^{\rho}g)(a)} \ar[u]^{\beta_{a,M}(y)} & \R_r^{\rho} N(a) \ar[u]^{\beta_{a,N}(y)}.
    }
\end{equation}

\begin{definition}\label{def:latching and matching}
For $r\geq 0$, we call $\L_r^{\rho}$ (resp., $\R_r^{\rho}$) the \emph{$r$-latching} (resp., \emph{$r$-matching}) \emph{functor} on $\Fun(P,\mathcal{C})$ with respect to $\rho$. 
We simply write $\L_r$ and $\R_r$ instead of $\L_r^{\rho}$ and $\R_r^{\rho}$ if $\rho$ is clear from the context. 
\end{definition}    

The following result is central of this paper.

\begin{proposition}\label{prop:adjoint LrRr}
For each $r \geq 0$, we have an adjunction $\L_r^{\rho}\dashv \R_{r}^{\rho}$.
\begin{equation}
\xymatrix@C=68pt@R=38pt{
\Fun(P,\mathcal{C}) 
\ar@{<-}@/^3.5mm/[r]^{\L_r^{\rho}} 
\ar@{}[r]|{\perp}
& 
\ar@{<-}@/^3.5mm/[l]^{\R_r^{\rho}} 
\Fun(P,\mathcal{C}).  
}
\end{equation}
\end{proposition}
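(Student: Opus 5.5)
Our approach is to establish a natural bijection
\[
\Hom(\L_r^{\rho}M, N) \;\cong\; \Hom(M, \R_r^{\rho}N)
\]
by showing that both sides are naturally identified with a third set: the set of families of morphisms $\big(\theta_{x,y}\colon M(x)\to N(y)\big)$ indexed by pairs $x\leq y$ in $P$ with $\rho(x,y)\geq r$, subject to compatibility. The compatibility condition says that $\theta_{x',y}\circ M(x\le x') = \theta_{x,y}$ whenever $x\le x'$ and both pairs are admissible, and that $N(y\le y')\circ\theta_{x,y} = \theta_{x,y'}$ whenever $y\le y'$ and both pairs are admissible. By Lemma~\ref{lem:updownarrows}(4), the relation "$x\in y^{\downarrow_r}$" is the same as "$y\in x^{\uparrow_r}$", so this index set is symmetric with respect to the two sides of the adjunction.

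First, given $f\colon \L_r M\to N$, we set $\theta_{x,y}:= f(y)\circ\alpha_{y,M}(x)$ for $x\in y^{\downarrow_r}$. Compatibility in $x$ holds because $\alpha_{y,M}$ is a cocone. Compatibility in $y$ follows from the naturality of $f$ together with the definition of $\L_rM(y\le y')$; the latter is characterized by $\L_rM(y\le y')\circ\alpha_{y,M}(x)=\alpha_{y',M}(x)$, using the inclusion $y^{\downarrow_r}\subseteq y'^{\downarrow_r}$ from Lemma~\ref{lem:updownarrows}(2). Conversely, a compatible family determines, for each $y$, a cocone $(\theta_{x,y})_{x\in y^{\downarrow_r}}$ on $M|_{y^{\downarrow_r}}$, and hence a unique morphism $f(y)\colon \colim M|_{y^{\downarrow_r}}\to N(y)$. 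To see that the $f(y)$ are natural in $y$, we check that both $N(y\le y')\circ f(y)$ and $f(y')\circ \L_rM(y\le y')$ restrict along $\alpha_{y,M}(x)$ to $\theta_{x,y'}$, and then apply the uniqueness in the universal property. These two constructions are mutually inverse, again by the uniqueness part of the universal property.

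Second, we argue dually: given $g\colon M\to\R_rN$, we set $\theta_{x,y}:=\beta_{x,N}(y)\circ g(x)$ for $y\in x^{\uparrow_r}$. Limit cones give the inverse construction, and the same uniqueness arguments go through with the roles of the two variables interchanged. Composing the two bijections yields the desired bijection $\Hom(\L_rM,N)\cong\Hom(M,\R_rN)$.

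Finally, we verify naturality of the bijection in $M$ and in $N$. For a morphism $h\colon N\to N'$, postcomposing $f$ with $h$ replaces $\theta_{x,y}$ by $h(y)\circ\theta_{x,y}$, which matches $\R_r h$ by the defining property of $\R_r h$. Naturality in $M$ is checked similarly, using the defining square for $\L_r$ on morphisms. The main obstacle is not conceptual but bookkeeping: one must verify that naturality of $f$ in $y$ corresponds exactly to compatibility in $y$ (and likewise on the other side), so that the converse constructions really produce natural transformations. This relies only on the monotonicity in Lemma~\ref{lem:updownarrows}(2), and superadditivity of $\rho$ is not needed.

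Alternatively, one could observe that $\L_r$ is a left Kan extension along the "relation" profunctor $\{(x,y): x\in y^{\downarrow_r}\}$ and $\R_r$ the corresponding right Kan extension, and deduce the adjunction formally. We would present the elementary argument above, however.
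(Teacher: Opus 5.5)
Your proof is correct and is essentially the paper's argument. Composing your two bijections through the compatible families $\theta_{x,y}$ yields exactly the paper's maps: $g\mapsto g^{\sharp}$ via the cocone $(\beta_{x,N}(a)\circ g(x))_{x\in a^{\downarrow_r}}$, and $f\mapsto f^{\flat}$ via the cone $(f(y)\circ\alpha_{y,M}(a))_{y\in a^{\uparrow_r}}$; the intermediate family just makes the uniqueness and naturality checks explicit. One small caveat: the monotonicity in Lemma~\ref{lem:updownarrows}(2) is itself derived from superadditivity together with $\rho\geq 0$, so "superadditivity is not needed" should read "nothing beyond that weaker monotonicity (already required to define $\L_r$ and $\R_r$) is used."
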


\begin{proof}
Fix $r\geq 0$. 
To show that $\L_r^{\rho}$ is left adjoint to $\R_r^{\rho}$, we construct mutually inverse bijections, natural in $M$ and $N$,
\begin{equation}\label{eq:LRadjoint}
(-)^{\flat} \colon \Hom(\L_r^{\rho}M,N) \leftrightarrows \Hom(M,\R_r^{\rho}N) \colon (-)^{\sharp}. 
\end{equation}

For $x,a\in P$, recall that $x\in a^{\downarrow_r}$ holds if and only if $a\in x^{\uparrow_r}$ by Lemma~\ref{lem:updownarrows}. 
In this case, we have obtained the canonical morphisms 
$\alpha_{a,M}(x) \colon M(x) \to \L_r^{\rho}M(a)$ and $\beta_{x,M}(a) \colon \R_r^{\rho}M(x)\to M(a)$.

Let $g\colon M \to \R_r^{\rho}N$ be a morphism in $\Fun(P,\mathcal{C})$. 
Then, the family of morphisms $\big(M(x) \xrightarrow{g(x)} \R_r^{\rho} N(x) \xrightarrow{\beta_{x,N}(a)}N(a)\big)_{x\in a^{\downarrow_r}}$ forms a cocone over the diagram $M|_{a^{\downarrow_r}}$, and hence 
the universal property of the colimit yields a unique morphism $g^{\sharp}(a) \colon \L_r^{\rho}M(a) \to N(a)$. 
\begin{equation}
\xymatrix@C=38pt@R=26pt{
\L_r^{\rho} M (a) \ar@{.>}[r]^-{g^{\sharp}(a)} \ar@{}[rd]|{\circlearrowleft} & N(a) \\ 
M(x) \ar@{->}[r]^-{g(x)} \ar[u]^{\alpha_{a,M}(x)} & \R_r^{\rho} N(x) \ar[u]^{\beta_{x,N}(a)}.
}
\end{equation}
This construction is natural in $a$, so the maps $g^{\sharp}(a)$ give rise to a unique morphism $g^{\sharp} \colon \L_r^{\rho}M \to N$.

Conversely, we define the map $(-)^{\flat}$ by a dual argument using limits. 
Explicitly, for a given morphism $f\colon \L_r^{\rho} M \to N$, we define a morphism $f^{\flat}\colon M\to \R_r^{\rho}N$ as follows: 
For each $a\in P$, the family of morphisms $\big(M(a) \xrightarrow{\alpha_{y,M}(a)} \L_r^{\rho}M(y) \xrightarrow{f(y)} N(y)\big)_{y\in a^{\uparrow_r}}$ forms a cone over the diagram $N|_{a^{\uparrow_r}}$. 
Then, the universal property of the limit yields a unique morphism $f^{\flat}(a)\colon M(a)\to \R_r^{\rho}N(a)$. 
These morphisms define a morphism $f^{\flat} \colon M\to \R_r^{\rho}N$. 
\begin{equation}
\xymatrix@C=38pt@R=26pt{
\L_r^{\rho} M (y) \ar@{->}[r]^-{f(y)} \ar@{}[rd]|{\circlearrowleft} & N(y) \\ 
M(a) \ar@{.>}[r]^-{f^{\flat}(a)} \ar[u]^{\alpha_{y,M}(a)} & \R_r^{\rho} N(a) \ar[u]^{\beta_{a,N}(y)}.
}
\end{equation}

By the uniqueness of the morphisms induced by the universal properties of limits and colimits, 
it follows that $(-)^{\flat}$ and $(-)^{\sharp}$ are mutually inverse. 
Moreover, one can check that these correspondences are natural in $M$ and $N$. 
This completes the proof. 
\end{proof}

In the rest of this paper, we will freely use the notations $f^{\flat}$ and $g^{\sharp}$ for the bijections induced by the adjunction $\L_r^{\rho} \dashv \R_r^{\rho}$ as in \eqref{eq:LRadjoint}.

For a height-difference function $\rho$ on $P$, the family of endofunctors $\{\L_r,\R_r\}_{r\geq 0}$ satisfies the following compatibility properties. 
\begin{itemize}
\item There are natural isomorphisms $\L_0 \simeq \id \simeq \R_0$ of endofunctors on $\Fun(P,\mathcal{C})$. 
\item For $s\geq r$, there are natural transformations $\eta_{s,r}^{\L}\colon \L_s \Rightarrow \L_r$ and $\eta_{r,s}^{\R}\colon \R_r \Rightarrow \R_s$ which are compatible in the sense that 
\[
\eta_{r,t}^{\L} = (\L_t \overset{\eta_{t,s}^{\L}}{\Rightarrow} 
\L_s \overset{\eta_{s,r}^{\L}}{\Rightarrow} \L_r)
\qand 
\eta_{r,t}^{\R} = (\R_r \overset{\eta_{r,s}^{\R}}{\Rightarrow} 
\R_s \overset{\eta_{s,t}^{\R}}{\Rightarrow} \R_t)
\]
for all $t\geq s\geq r$. 
\item For any $s,r\geq 0$, there are natural transformations
\begin{equation}\label{eq:muLR}
\mu^{\L}_{s,r}\colon \L_s\L_r \Rightarrow \L_{s+r}
\qand
\mu^{\R}_{r,s}\colon \R_{s+r} \Rightarrow \R_r\R_s.
\end{equation}
\end{itemize}

In fact, the natural transformation $\eta_{s,r}^{\L}\colon \L_{s} \Rightarrow \L_{r}$ above for $s\geq r$ is induced by the inclusion $a^{\downarrow_s}\subseteq a^{\downarrow_{r}}$. 
On the other hand, let $s,r\geq 0$ and $a\in P$.
For each $x\in a^{\downarrow_s}$, we have $x^{\downarrow_r}\subseteq a^{\downarrow_{s+r}}$ by \eqref{eq:superadditive}.
Thus the universal property of colimits yields a canonical morphism
$\colim M|_{x^{\downarrow_r}} \to \colim M|_{a^{\downarrow_{s+r}}}$.
These maps are natural in $x$, hence form a cocone over the diagram
$\bigl(\colim M|_{x^{\downarrow_r}}\to \colim M|_{a^{\downarrow_{s+r}}}\bigr)_{x\in a^{\downarrow_s}}$.
Therefore, by the universal property of the colimit, they induce a canonical morphism
$(\L_s(\L_rM))(a)\to (\L_{s+r}M)(a)$.
Naturality in $a$ and $M$ yields a natural transformation
$\mu^{\L}_{s,r}\colon \L_s\L_r\Rightarrow \L_{s+r}$. 
The construction of $\eta^{\R}_{r,s}$ and $\mu^{\R}_{r,s}$ is obtained dually, using limits instead of colimits.

Moreover, for $s\geq r$, by $\L_r\dashv \R_r$ we have the adjoint correspondence
\[
\mathrm{Nat}(\L_s,\L_r)\cong \mathrm{Nat}(\R_r,\R_s)
\]
(see Appendix~\ref{appendix:two-adjunction-mates} for details),
under which $\eta^{\R}_{r,s}$ agrees with the mate of $\eta^{\L}_{s,r}$.

On the other hand, for $s,r\geq 0$, using the composite adjunction $\L_s\L_r\dashv \R_r\R_s$ we have
\[
\mathrm{Nat}(\L_s\L_r,\L_{s+r})\cong \mathrm{Nat}(\R_{s+r},\R_r\R_s),
\]
under which $\mu^{\R}_{r,s}$ agrees with the mate of $\mu^{\L}_{s,r}$.

We will often write $\eta_r^{\L}$ and $\eta_r^{\R}$ for $\eta_{r,0}^{\L}$ and $\eta_{0,r}^{\R}$, respectively.
In addition, we define a natural transformation
\[
e_r := \eta_r^{\R}\circ \eta_r^{\L}\colon \L_r \Rightarrow \R_r
\]
as the vertical composite. 
Namely, its component at $M$ is
\begin{equation}\label{eq:erosion}
e_{r,M}:\ \L_rM \xrightarrow{\eta_{r,M}^{\L}} M \xrightarrow{\eta_{r,M}^{\R}} \R_rM .
\end{equation}

\begin{remark}\label{rem:Reedy-type} 
Our latching/matching functors are inspired by Reedy-type constructions, but they differ from the classical latching/matching functors. 
Here, we recall from \cite{Hovey99} that the \emph{latching space} of a functor $M\colon P \to \mathcal{C}$ at $a\in P$ is defined as the object $\colim M|_{a^{\downarrow} \setminus \{a\}}$ in $\mathcal{C}$. 
Similar to Definition~\ref{def:latching and matching}, one can define an endofunctor ${\rm L}$ on $\Fun(P,\mathcal{C})$ by assigning to each $M$ the functor ${\rm L}M\colon P \to \mathcal{C}$ that sends $a\in P$ to the latching space of $M$ at $a$. 
We have a natural transformation ${\rm L} \Rightarrow \id$ of endofunctors on $\Fun(P,\mathcal{C})$. 
By construction, this yields a factorization $\L_r^{\rho} \Rightarrow {\rm L} \Rightarrow \id$ for any height-difference function $\rho$ on $P$ and $r>0$.

Dually, there is an endofunctor ${\rm R}$ on $\Fun(P,\mathcal{C})$,
defined using limits over $a^{\uparrow}\setminus\{a\}$,
and one similarly obtains a factorization 
$\id \Rightarrow {\rm R} \Rightarrow \R_r^{\rho}$.
\end{remark}

For latter use, we record a pointwise description of $\L_r$ and $\R_r$ in terms of Kan extensions (We refer to \cite{MacLane} for the basics of Kan extensions). 
Fix $r\geq 0$ and $a \in P$.
We write $i_a\colon a^{\downarrow_r}\hookrightarrow P$ and $j_a\colon a^{\uparrow_r}\hookrightarrow P$ for the canonical inclusions. 
Let $\Lan_{i_a}$ (resp., $\Ran_{j_a}$) be the left (resp., right) Kan extension along $i_a$ (resp., $j_a$). 
Then, for any $M\in \Fun(P,\mathcal{C})$, we have canonical identifications 
\begin{equation}\label{eq:LR as Kanextension}
    \L_rM(a) \cong (\Lan_{i_a}(M|_{a^{\downarrow_r}}))(a) \qand 
    \R_rM(a) \cong (\Ran_{j_a}(M|_{a^{\uparrow_r}}))(a). 
\end{equation}
In other words, the values $\L_rM(a)$ and $\R_rM(a)$ are given as Kan extension values at $a$. 
Note, however, that the indexing inclusions $i_a,j_a$ depend on $a$, so this gives only a pointwise description; it does not identify the endofunctors $\L_r,\R_r\colon \Fun(P,\mathcal{C})\to \Fun(P,\mathcal{C})$
with a single Kan extension along a fixed inclusion independent of $a$. 

Instead of \eqref{eq:LR as Kanextension}, we may work with the full subposet 
\begin{equation}\label{eq:Ua}
    U_a := a^{\downarrow_r}\cup a^{\uparrow_r}\subseteq P
\end{equation}
and the canonical inclusion $u_a\colon U_a\hookrightarrow P$. 
Since $U_a \cap a^{\downarrow} = a^{\downarrow_r}$ and $U_a \cap a^{\uparrow} = a^{\uparrow_r}$, 
we also obtain pointwise descriptions 
\begin{equation}\label{eq:LR as Ua}
    \L_rM(a) \cong (\Lan_{u_a}(M|_{U_a}))(a) 
    \qand 
    \R_rM(a) \cong (\Ran_{u_a}(M|_{U_a}))(a).  
\end{equation}

\subsection{Height-interleaving distances}
In this section, we introduce a notion of interleavings using the adjoint endofunctors $\L_r \dashv\R_r$ from the previous section. 

Let $\rho$ be a height-difference function on a poset $P$. 

\begin{definition}\label{def:height-interleaving}
Let $M,N \in \Fun(P,\mathcal{C})$. 
For $r\geq 0$, an \emph{$r$-height-interleaving} between $M$ and $N$ with respect to $\rho$ is a pair of morphisms $p\colon M \to \R_r N$ and $q\colon N\to \R_r M$ satisfying 
\[
    e_{r,M} = q\circ p^{\sharp} 
    \qand 
    e_{r,N} = p\circ q^{\sharp},
\]
where $p^{\sharp}\colon \L_rM\to N$ and $q^{\sharp}\colon \L_rN\to M$ denote the morphisms corresponding to $p$ and $q$ under the adjunction $\L_r \dashv \R_r$, respectively. 
Equivalently, the following diagram commutes, where we note that the commutativity of the two adjunction squares is automatic by $\L_r\dashv \R_r$. 
\begin{equation}
    \xymatrix@C=36pt@R=26pt{
    \L_r M \ar[r]^{\eta_{r,M}^{\L}} 
    \ar[dr]^(0.4){p^{\sharp}} & M \ar[r]^{\eta_{r,M}^{\R}} \ar[dr]^(0.4){p} & \R_rM \\ 
    \L_r N \ar[r]_{\eta_{r,N}^{\L}} \ar[ur]^(0.3){q^{\sharp}} 
    & N \ar[r]_{\eta_{r,N}^{\R}} \ar[ur]^(0.3){q} & \R_rN. 
    }
\end{equation} 

In this case, we say that $M$ and $N$ are \emph{$r$-height-interleaved} (with respect to $\rho$) and write $M\overset{\rho}{\underset{r}{\sim}} N$.

We define the \emph{$\rho$-interleaving distance} between $M$ and $N$ by 
\begin{equation}
    d_{\rho}(M,N) := \inf\{r\geq 0\mid \text{$M$ and $N$ are $r$-height-interleaved with respect to $\rho$}\},
\end{equation}
where we set $d_{\rho}(M,N)= \infty$ if no such $r$ exists.
\end{definition}

\begin{convention}\label{conv:phi-notation}
Let $\phi\colon P\to\mathbb{R}$ be a height function, and let $\rho_\phi$ be the associated height-difference function as in \eqref{eq:rho-phi}.
Whenever $\rho=\rho_\phi$, we write
\[
\L_r^{\phi}:=\L_r^{\rho_\phi},
\qquad
\R_r^{\phi}:=\R_r^{\rho_\phi},
\qquad
e_r^{\phi}:=e_r^{\rho_\phi},
\qquad
d_{\phi}:=d_{\rho_\phi}.
\]
In this case, we also say \emph{$r$-height-interleaving with respect to $\phi$} to mean $r$-height-interleaving with respect to $\rho_\phi$.
\end{convention}

While $d_{\rho}$ enjoys symmetry and vanishing on the diagonal, it fails to satisfy the triangle inequality in general, even when $\rho=\rho_\phi$ is induced by a height function $\phi$ (see Example~(2) in Subsection~\ref{sec:examples}). 
In Section~\ref{sec:distances}, we will therefore discuss a relaxed version of the triangle inequality, allowing an additive defect.

\medskip
As expected for interleaving-type distances, our notion of height-interleaving has a monotonicity property as follows. 
\begin{lemma}\label{lem:monotonicity-height-interleaving}
If $M$ and $N$ are $r$-height-interleaved with respect to $\rho$, 
then they are $s$-height-interleaved for all $s\geq r$. 
\end{lemma}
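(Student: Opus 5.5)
Given an $r$-height-interleaving $p\colon M\to \R_rN$, $q\colon N\to\R_rM$ and $s\ge r$, I will push $p$ and $q$ forward along the comparison $\eta^{\R}_{r,s}$. Set
\[
p' := \eta^{\R}_{r,s,N}\circ p\colon M\to \R_sN, \qquad q' := \eta^{\R}_{r,s,M}\circ q\colon N\to \R_sM .
\]
It then suffices to prove $e_{s,M}=q'\circ p'^{\sharp}$; the identity $e_{s,N}=p'\circ q'^{\sharp}$ follows by the same argument with the roles of $M,N$ exchanged.

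First I compute $p'^{\sharp}$, where $\sharp$ is taken for $\L_s\dashv\R_s$. Since $\eta^{\R}_{r,s}$ is the mate of $\eta^{\L}_{s,r}$, the standard mate identity (Appendix) gives
\[
p'^{\sharp}=p^{\sharp}\circ \eta^{\L}_{s,r,M}\colon \L_sM\to \L_rM\to N .
\]
I can also verify this directly pointwise. Both sides are induced from the colimit over $a^{\downarrow_s}\subseteq a^{\downarrow_r}$. On the leg at $x\in a^{\downarrow_s}$, each side equals $\beta_{x,N}(a)\circ p(x)$, because restricting the limit cone over $x^{\uparrow_r}$ to $x^{\uparrow_s}$ is exactly what $\eta^{\R}_{r,s}$ does.

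Next I compose:
\[
q'\circ p'^{\sharp} \;=\; \eta^{\R}_{r,s,M}\circ q\circ p^{\sharp}\circ \eta^{\L}_{s,r,M} \;=\; \eta^{\R}_{r,s,M}\circ e_{r,M}\circ \eta^{\L}_{s,r,M}.
\]
Expanding $e_{r,M}=\eta^{\R}_{r,M}\eta^{\L}_{r,M}$ and using the compatibilities $\eta^{\R}_{r,s}\eta^{\R}_{0,r}=\eta^{\R}_{0,s}$ and $\eta^{\L}_{r,0}\eta^{\L}_{s,r}=\eta^{\L}_{s,0}$ listed after Proposition~\ref{prop:adjoint LrRr}, this equals $\eta^{\R}_{s,M}\eta^{\L}_{s,M}=e_{s,M}$.

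The only step needing care is the mate identity for $p'^{\sharp}$, i.e., that $\sharp$ intertwines post-composition with $\eta^{\R}_{r,s}$ and pre-composition with $\eta^{\L}_{s,r}$. The direct pointwise check above settles it by uniqueness of maps out of a colimit.
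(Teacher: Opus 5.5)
Your proof is correct and takes the same route as the paper. The paper simply notes that post-composing the interleaving morphisms with $\eta^{\R}_{r,s}$ (equivalently, pre-composing their adjoints with $\eta^{\L}_{s,r}$) turns an $r$-height-interleaving into an $s$-height-interleaving; your pointwise check of the mate identity $(\eta^{\R}_{r,s,N}\circ p)^{\sharp}=p^{\sharp}\circ\eta^{\L}_{s,r,M}$ supplies exactly the detail the paper leaves implicit.
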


\begin{proof}
It is immediate from the natural transformations $\eta_{s,r}^{\L}\colon \L_{s}\Rightarrow \L_r$ and $\eta_{r,s}^{\R}\colon \R_r\Rightarrow \R_s$, which allow us to compose an $r$-interleaving into an $s$-interleaving.
\end{proof}

\begin{lemma}\label{lem:id-LR_interleaving}
Let $M\in\Fun(P,\mathcal{C})$ and $r\geq 0$. 
Then, $M$ is $r$-height-interleaved with each of $\L_rM$ and $\R_rM$, with respect to $\rho$.
\end{lemma}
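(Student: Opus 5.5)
The plan is to write down explicit interleaving morphisms built only from the canonical maps $\eta^{\L}_r$, $\eta^{\R}_r$ and the (co)unit of $\L_r\dashv\R_r$. Then we verify the two identities of Definition~\ref{def:height-interleaving} by testing against the (co)limit structure maps $\alpha$ and $\beta$. We treat $N=\R_rM$ in detail; the case $N=\L_rM$ is formally dual, with colimits and limits, $\L_r$ and $\R_r$, and $\sharp$ and $\flat$ interchanged.

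For $N=\R_rM$ we take
\[
q:=\id_{\R_rM}\colon \R_rM\to\R_rM,\qquad p:=\R_r(\eta^{\R}_{r,M})\circ\eta^{\R}_{r,M}\colon M\to \R_r\R_rM .
\]
By naturality of $\eta^{\R}_r\colon\id\Rightarrow\R_r$, also $p=\eta^{\R}_{r,\R_rM}\circ\eta^{\R}_{r,M}$. The transpose $q^{\sharp}=\varepsilon_M\colon\L_r\R_rM\to M$ is the counit of the adjunction.

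\emph{First identity, $e_{r,M}=q\circ p^{\sharp}=p^{\sharp}$.} Naturality of the bijection $(-)^{\sharp}$ gives $(\R_r(h)\circ g)^{\sharp}=h\circ g^{\sharp}$. Hence $p^{\sharp}=\eta^{\R}_{r,M}\circ(\eta^{\R}_{r,M})^{\sharp}$. It remains to check $(\eta^{\R}_{r,M})^{\sharp}=\eta^{\L}_{r,M}$. This follows directly from the explicit description of $(-)^{\sharp}$ in the proof of Proposition~\ref{prop:adjoint LrRr}: for $x\in a^{\downarrow_r}$, both maps restrict along $\alpha_{a,M}(x)$ to $M(x\le a)$. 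So $p^{\sharp}=e_{r,M}$.

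\emph{Second identity, $e_{r,\R_rM}=p\circ q^{\sharp}$.} Using the second form of $p$, we need
\[
\eta^{\R}_{r,\R_rM}\circ\eta^{\R}_{r,M}\circ\varepsilon_M=\eta^{\R}_{r,\R_rM}\circ\eta^{\L}_{r,\R_rM}.
\]
It suffices to show $\eta^{\R}_{r,M}\circ\varepsilon_M=\eta^{\L}_{r,\R_rM}$ as maps $\L_r\R_rM\to\R_rM$. I check this componentwise at $a\in P$ via the universal properties. Precompose with $\alpha_{a,\R_rM}(x)$ for $x\in a^{\downarrow_r}$, and postcompose with $\beta_{a,M}(y)$ for $y\in a^{\uparrow_r}$.
\begin{itemize}
\item The left side becomes $M(a\le y)\circ\beta_{x,M}(a)$.
\item The right side becomes $\beta_{a,M}(y)\circ\R_rM(x\le a)$.
\end{itemize}
By superadditivity \eqref{eq:superadditive}, $\rho(x,y)\ge\rho(x,a)+\rho(a,y)\ge 2r\ge r$, so $y\in x^{\uparrow_r}$. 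Then both composites equal $\beta_{x,M}(y)$: the left side by the cone condition for $\R_rM(x)$, and the right side by the definition of $\R_rM(x\le a)$.

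The only genuinely non-formal point, and the step I expect to need care, is this last one. It is where superadditivity of $\rho$ enters, guaranteeing that the relevant index $y$ lies in $x^{\uparrow_r}$ so that the two composites are comparable; everything else is bookkeeping with the adjunction. For $N=\L_rM$ the same argument applies dually, with $p:=\id_{\L_rM}$ and $q:=\eta^{\L}_{r,M}\circ\L_r(\eta^{\L}_{r,M})$, transposed appropriately.
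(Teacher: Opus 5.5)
Your proof is correct and follows the paper's route. Your $p=\R_r(\eta^{\R}_{r,M})\circ\eta^{\R}_{r,M}$ is exactly the paper's $p=(e_{r,M})^{\flat}$, since $(\eta^{\L}_{r,M})^{\flat}=\eta^{\R}_{r,M}$; it is paired with $q=\id_{\R_rM}$, and the $\L_rM$ case is the dual. Your componentwise check of $\eta^{\R}_{r,M}\circ\varepsilon_M=\eta^{\L}_{r,\R_rM}$ fills in what the paper compresses into ``naturality of $\eta^{\R}_r$ and the adjunction,'' and you are right that this step genuinely relies on the specific (co)limit description of $\L_r$ and $\R_r$ rather than on adjunction and naturality alone.
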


\begin{proof}
We show the claim for $\R_rM$. The statement for $\L_rM$ follows by duality. 
Define 
\[
p := (e_{r,M})^{\flat} \colon M\to \R_r(\R_rM)
\qand 
q := \id_{\R_rM}\colon \R_rM\to \R_rM.
\]
Then $(p,q)$ forms an $r$-height-interleaving between $M$ and $\R_rM$, using the naturality of $\eta_r^{\R}$ and the adjunction $\L_r\dashv \R_r$.
\end{proof}

\subsection{Recovering the classical interleaving distance on $\mathbb{R}^d$}

Recall that the classical interleaving distance $d_I$ on $\Fun(\mathbb{R}^d,\mathcal{C})$ is defined via the $r$-shift functors (see Section~\ref{sec:prelim translations}).
We now recover $d_I$ from our height-interleaving distance by specializing to the poset $P=\mathbb{R}^d$ and the following height-difference function:
\begin{equation} \label{eq:rho-diag}
    \rho_{\mathrm{diag}}(a,b):=\min_{1\leq i\leq d}(b_i-a_i)\qquad(a\leq b).
\end{equation}

\begin{proposition}\label{prop:recoverRd}
Let $\rho:=\rho_{\mathrm{diag}}$ be defined above. 
Then the following statements hold.
\begin{enumerate}[\rm (1)]
    \item There are natural isomorphisms $\L_r^{\rho} \simeq (-r)$ and $\R_r^{\rho} \simeq (r)$ for all $r\geq 0$.
    \item Under the identifications in \textnormal{(1)}, the notion of $r$-interleaving induced by the $r$-shift functors coincides with the notion of $r$-height-interleaving with respect to $\rho$ on $\Fun(\mathbb{R}^d,\mathcal{C})$.
    \item The distance $d_{\rho}$ coincides with the interleaving distance $d_{I}$ on $\Fun(\mathbb{R}^d,\mathcal{C})$.
\end{enumerate}
\end{proposition}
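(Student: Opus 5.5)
The plan is to compute the neighborhoods explicitly and then use cofinality. For $\rho=\rho_{\mathrm{diag}}$ and $a\in\mathbb{R}^d$, a point $x\le a$ satisfies $\rho(x,a)\ge r$ exactly when $a_i-x_i\ge r$ for all $i$, that is, when $x\le a-\vec r$. Hence
\[
a^{\downarrow_r}=(a-\vec r)^{\downarrow}
\qand
a^{\uparrow_r}=(a+\vec r)^{\uparrow}.
\]
The first poset has a terminal object $a-\vec r$ and the second has an initial object $a+\vec r$. Therefore the colimit defining $\L_r^\rho M(a)$ is computed by evaluation at the terminal object: the cocone $\alpha_{a,M}$ has component $\alpha_{a,M}(a-\vec r)$ at that point, and this component is an isomorphism $M(a-\vec r)\cong \L_r^\rho M(a)$. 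Dually, $\beta_{a,M}(a+\vec r)\colon \R_r^\rho M(a)\to M(a+\vec r)$ is an isomorphism. These isomorphisms are natural in $a$, since the structure maps $\L_r^\rho M(a\le b)$ are induced by the inclusion of down-sets, which carries terminal object to terminal object through $M(a-\vec r\le b-\vec r)$. They are also natural in $M$, directly from the definition of $\L_r^\rho f$. This proves (1). I would also record how the canonical maps transform: $\eta^{\L}_{r,M}$ becomes $M(a-\vec r\le a)$, i.e.\ $\eta_{\Omega_{-r},M}$, and $\eta^{\R}_{r,M}$ becomes $\eta_{\Omega_r,M}$.

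For (2), I would identify the adjunction $\L_r^\rho\dashv\R_r^\rho$, transported along (1), with the standard adjunction $(-r)\dashv(r)$ given by the isomorphism of categories $\Omega_{-r}^*\cong(\Omega_r^*)^{-1}$. Concretely, take $g\colon M\to N(r)$. By construction, $g^\sharp(a)$ is the map induced by the cocone $\beta_{x,N}(a)\circ g(x)$ for $x\in a^{\downarrow_r}$. Evaluating at the terminal object $x=a-\vec r$, and noting that $\beta_{x,N}(a)$ there is the isomorphism from (1), gives
\[
g^\sharp(a)=g(a-\vec r)\colon M(a-\vec r)\to N(a).
\]
That is, $g^\sharp=\Omega_{-r}^*g$. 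With this, the conditions $e_{r,M}=q\circ p^\sharp$ and $e_{r,N}=p\circ q^\sharp$ become the following.
\begin{itemize}
\item Pointwise, they read $M(a-\vec r\le a+\vec r)=q(a)\circ p(a-\vec r)$, together with the symmetric condition for $N$.
\item These are the classical $r$-interleaving equations, $g(r)\circ f=\eta_{2r}$, written at $a-\vec r$.
\end{itemize}
The correspondence $(p,q)\leftrightarrow(f,g)$ is therefore the identity under the identifications, and this gives (2).

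Part (3) follows immediately from (2), since both distances are infima over the same set of $r$. The main obstacle is bookkeeping: verifying that the transported adjunction really is the shift adjunction, so that $p^\sharp$ is $\Omega_{-r}^*p$ and not merely isomorphic to it up to an uncontrolled automorphism. I would handle this with the explicit terminal and initial object computation above, checked componentwise, rather than appealing to uniqueness of adjoints.
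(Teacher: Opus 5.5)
Your proposal is correct and follows the paper's proof. The paper likewise computes $a^{\downarrow_r}=(a-\vec r)^{\downarrow}$ and $a^{\uparrow_r}=(a+\vec r)^{\uparrow}$, evaluates the (co)limits at the maximum and minimum to obtain (1), and then simply asserts that (2) and (3) follow. Your explicit check that $p^\sharp=\Omega_{-r}^*p$, and that $\eta^{\L}_r$ and $\eta^{\R}_r$ become the classical structure maps, is exactly the bookkeeping the paper leaves implicit when it deduces (2) from (1).
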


\begin{proof}
(1) Let $r\geq 0$ and $a\in \mathbb{R}^d$. 
For $\rho=\rho_{\mathrm{diag}}$, the subsets $a^{\downarrow_r}$ and $a^{\uparrow_r}$ are given by
\begin{equation}\label{eq:vsRshiftRd}
a^{\downarrow_r} = (a-\vec{r})^{\downarrow}
\qand
a^{\uparrow_r} = (a+\vec{r})^{\uparrow},
\end{equation}
where $\vec{r}=(r,\ldots,r)\in\mathbb{R}^d$.
Indeed, for $x\leq a$ we have $\rho(x,a)=\min_i(a_i-x_i)$, so $\rho(x,a)\geq r$ if and only if $x\leq a-\vec{r}$. 
The second identity is similar.

Let $M\in \Fun(\mathbb{R}^d,\mathcal{C})$. 
Since $(a-\vec{r})$ is the unique maximal element of $(a-\vec{r})^{\downarrow}$, we obtain
\[
\L_r^{\rho}M(a)
=\colim M|_{(a-\vec{r})^{\downarrow}}
\cong M(a-\vec{r})
=(M(-r))(a).
\]
Similarly, since $(a+\vec{r})$ is the unique minimal element of $(a+\vec{r})^{\uparrow}$, we have
\[
\R_r^{\rho}M(a)
=\lim M|_{(a+\vec{r})^{\uparrow}}
\cong M(a+\vec{r})
=(M(r))(a).
\]
These identifications are natural in $M$, hence yield natural isomorphisms $\L_r^{\rho}\simeq (-r)$ and $\R_r^{\rho}\simeq (r)$.

The assertions \textnormal{(2)} and \textnormal{(3)} follow from \textnormal{(1)}.
\end{proof}

When $d=1$, $\rho_{\mathrm{diag}}$ coincides with the height-difference induced by the height function $\id_{\mathbb{R}}$.
For $d\geq 2$, however, $\rho_{\mathrm{diag}}$ is not induced by any height function $\phi\colon \mathbb{R}^d\to\mathbb{R}$.

\subsection{Examples} \label{sec:examples}
In this subsection, we illustrate the functors $\L_r$ and $\R_r$, together with height-interleaving distances, 
through small examples in $\Fun(P,\mathcal{C})$ with 
$\mathcal{C}=\Vect_k$, 
where the height-difference function is induced by an explicitly chosen height function. 
In particular, Example~(2) exhibits functors $M,X,N$ and a choice of $\phi$ for which the associated distance $d_{\phi}$ fails to satisfy the triangle inequality. 

The following notation will be used later.
A subset $J\subseteq P$ is \emph{convex} if $x\leq y\leq z$ in $P$ and $x,z\in J$ imply $y\in J$.
For such a subset $J$, we write $k_J\in \Fun(P,\Vect_k)$ for the functor given by $k_J(x)=k$ for $x\in J$ and $k_J(x)=0$ otherwise, with all structure maps induced by the identity.

\bigskip
(1) Let $P := \{0,1,2,3\}\times\{0,1,2\}$ be the $4\times 3$ grid poset with the product order, i.e.,
\[
(i,j)\leq (i',j') \quad\Longleftrightarrow\quad i\leq i'\ \text{and}\ j\leq j'.
\]
We write $v_{ij} = (i,j)$. 
Define a height function $\phi$ on this poset by $\phi(v_{ij}) := i+j$.

Consider a functor $M\in \Fun(P,\Vect_k)$ given by the middle diagram in \eqref{eq:example-LMR}, which is indecomposable.
Then, a direct computation shows that $\L_1M$ and $\R_1M$ are given by the left and right diagrams in \eqref{eq:example-LMR}, respectively. 
Note that, by our choice of $\phi(v_{ij})=i+j$, the index sets $(v_{ij})^{\downarrow_1}$ and $(v_{ij})^{\uparrow_1}$ are exactly the strict principal downset and upset of $v_{ij}$, respectively.

\begin{equation}\label{eq:example-LMR} 
\begin{tabular}{cccc}
$\L_1M$ & $M$ & $\R_1M$  \\
$\xymatrix@R=1.8em@C=2em{
k \ar[r]^{\text{\tiny$\left[\begin{smallmatrix}1 \\ 0\end{smallmatrix}\right]$}} & k^2 \ar[r] & 0 \ar[r] & 0 \\
0 \ar[r] \ar[u] & k \ar[r]^{\text{\tiny$\left[\begin{smallmatrix}1 \\ 0\end{smallmatrix}\right]$}} \ar[u]^{\text{\tiny$\left[\begin{smallmatrix}1 \\ 0\end{smallmatrix}\right]$}} & k^2 \ar[r]^{[1\ 0]} \ar[u] & k \ar[u] \\
0 \ar[r] \ar[u] & 0 \ar[r] \ar[u] & 0 \ar[r] \ar[u] & 0 \ar[u]
}$
& 
$\xymatrix@R=1.8em@C=2em{
k \ar[r]^{1} & k \ar[r] & 0 \ar[r] & 0 \\
k \ar[r]^{\text{\tiny$\left[\begin{smallmatrix}1 \\ 0\end{smallmatrix}\right]$}} \ar[u]^{1} & k^2 \ar[r]^{\text{\tiny$[1\ 0]$}} \ar[u]^{\text{\tiny$[1\ 1]$}} & k \ar[r]^{1} \ar[u] & k \ar[u] \\
0 \ar[r] \ar[u] & 0 \ar[r] \ar[u] & 0 \ar[r] \ar[u] & 0 \ar[u]
}$
&
$
\xymatrix@R=1.8em@C=2em{
k \ar[r]  &
0 \ar[r]  &
0 \ar[r]  &
0  \\
k^2 \ar[r]^{\text{\tiny$\left[\begin{smallmatrix}1&0\\ 1&1\end{smallmatrix}\right]$}} 
\ar[u]^{\text{\tiny$\left[\begin{smallmatrix}1&1\end{smallmatrix}\right]$}} &
k^2 \ar[r]^{\text{\tiny$\left[\begin{smallmatrix}1&0\end{smallmatrix}\right]$}} \ar[u] &
k \ar[r] \ar[u] &
0 \ar[u] \\
0 \ar[r] \ar[u] &
k \ar[r] \ar[u]^{\text{\tiny$\left[\begin{smallmatrix}0\\1\end{smallmatrix}\right]$}} &
0 \ar[r] \ar[u] &
k \ar[u]
}
$
\end{tabular}
\end{equation}

We now illustrate a computation for $(\L_1M)(v_{22})$ in detail. 
By definition,
\[
(\L_1M)(v_{22})
=\colim \bigl(M|_{(v_{22})^{\downarrow_1}}\bigr).
\]

Set $J:=\{v_{11},v_{21},v_{12}\}\subset (v_{22})^{\downarrow_1}$.
Since the poset $J\cap x^{\uparrow}$ has a minimum for every $x\in (v_{22})^{\downarrow_1}$, 
the above colimit is naturally isomorphic to $\colim M|_J$.
Thus
\[
(\L_1M)(v_{22}) 
\cong 
\colim
\left(
\vcenter{\hbox{$\xymatrix@R=1.3em@C=2.0em{
k \ar@{}[r]|{\phantom{k}} & \phantom{k} \\
k^2 \ar[u]^{\text{\tiny$[1\ 1]$}} \ar[r]^{\text{\tiny$[1\ 0]$}} & k
}$}}
\right)
\cong
\colim
\left(
\vcenter{\hbox{$\xymatrix@R=1.3em@C=2.0em{
k \ar@{}[r]|{\phantom{k}} & \phantom{k} \\
k \ar[u]^{1}\ar[r] & 0
}$}}
\right)
\oplus 
\colim
\left(
\vcenter{\hbox{$\xymatrix@R=1.3em@C=2.0em{
0 \ar@{}[r]|{\phantom{k}} & \phantom{k} \\
k \ar[u] \ar[r]^{1} & k
}$}}
\right)
\cong 0. 
\]
Similar computations can apply to the other vertices of $P$.

Moreover, the above $\L_1M$ and $\R_1M$ decompose as
\[
\L_1M \cong k_{J_1}
\oplus k_{\{(1,2)\}}
\oplus k_{\{(2,1)\}},
\qand 
\R_1M \cong 
k_{J_2}
\oplus 
k_{J_3}
\oplus
k_{\{(3,0)\}} 
\]
where $J_1,J_2,J_3$ are the following convex subsets: 
\[
J_1= \{(0,2),(1,2),(1,1),(2,1),(3,1)\},\  
J_2 = \{(0,1),(1,1),(2,1)\},\ 
J_3 = \{(1,0),(0,1),(1,1),(0,2)\}. 
\]

\bigskip
(2) 
Let $P=\{a<b<c<d\}$ be the four-point chain.
Fix a constant $C>1$ and define a height function $\phi\colon P\to\mathbb{R}$ by
\[
\phi(a)=0,\qquad \phi(b)=1,\qquad \phi(c)=C+1,\qquad \phi(d)=2C+1.
\]
Consider the functors $M,X,N\in \Fun(P,\Vect_k)$:
\[
M=(k\xrightarrow{1}k\xrightarrow{1}k\xrightarrow{1}k),\qquad
X=(k\xrightarrow{1}k\xrightarrow{1}k\rightarrow 0),\qquad
N =(k\xrightarrow{1}k\rightarrow 0\rightarrow 0).
\]

We claim that
\begin{equation}\label{eq:defect C}
d_{\phi}(M,X)=0,\qquad d_{\phi}(X,N)=0,\qquad d_{\phi}(M,N)=C.
\end{equation}
In particular, the triangle inequality for $d_\phi$ fails.
%, and any inequality of the form
%\[
%d_\phi(M,N) = d_\phi(M,X)+d_\phi(X,N)+c
%\]
%forces $c\ge C$.

\smallskip
Fix $0<\epsilon<1$.
First note that we have
\[
a^{\uparrow_\epsilon}=\{b,c,d\},\qquad
b^{\uparrow_\epsilon}=\{c,d\},\qquad
c^{\uparrow_\epsilon}=\{d\}, \qquad 
d^{\uparrow_\epsilon}=\emptyset. 
\]
A direct computation of limits over these upsets gives
\[
\R_{\epsilon}M =(k\rightarrow k\rightarrow k\rightarrow 0),
\quad
\R_{\epsilon}X =(k\rightarrow k\rightarrow0\rightarrow0),\quad
\R_{\epsilon}N =(k\rightarrow 0\rightarrow0\rightarrow0). 
\]
Similarly, we have that 
\[
a^{\downarrow_\epsilon}=\emptyset,\qquad
b^{\downarrow_\epsilon}=\{a\},\qquad
c^{\downarrow_\epsilon}=\{a,b\}, \qquad 
d^{\downarrow_\epsilon}=\{a,b,c\} 
\]
and that  
\[
\L_{\epsilon}M =(0 \rightarrow k \rightarrow k \rightarrow k),
\qquad
\L_{\epsilon}X =(0 \rightarrow k \rightarrow k \rightarrow k),\qquad
\L_{\epsilon}N =(0 \rightarrow k \rightarrow k \rightarrow 0). 
\]

\smallskip
We define morphisms $f\colon M\rightarrow \R_{\epsilon}X$ and $g\colon X\rightarrow \R_{\epsilon}M$ by
\[
f(a)=f(b)=1,\quad f(c)=f(d)=0,
\qquad
g(a)=g(b)=g(c)=1,\quad g(d)=0 .
\]
Let $f^{\sharp}\colon \L_{\epsilon}M\rightarrow X$ and $g^{\sharp}\colon \L_{\epsilon}X\rightarrow M$
be the left adjoints under $\L_{\epsilon}\dashv \R_{\epsilon}$.
Then $(f,g)$ gives an $\epsilon$-height-interleaving between $M$ and $X$ by the commutativity of the following diagrams:
\begin{equation}\label{eq:MX-eps-interleaving}
\vcenter{\hbox{$
\xymatrix@C=18pt@R=16pt{
\L_{\epsilon}X \ar[d]_{g^{\sharp}}  &
\llap{(\,}0 \ar[r] \ar[d]^0 &
k \ar[r] \ar[d]^{1} &
k \ar[r] \ar[d]^{1} &
k \rlap{\,)} \ar[d]^{1} \\
M \ar[d]_{f} &
\llap{(\,} k \ar[r] \ar[d]^{1} &
k \ar[r] \ar[d]^{1} &
k \ar[r] \ar[d]^{0} &
k \rlap{\,)} \ar[d]^{0}  \\
\R_{\epsilon}X &
\llap{(\,} k \ar[r] &
k \ar[r] &
0 \ar[r] &
0 \rlap{\,)} 
}
$}}
\qand  
\vcenter{\hbox{$
\xymatrix@C=18pt@R=16pt{
\L_{\epsilon}M \ar[d]_{f^{\sharp}} &
\llap{(\,} 0 \ar[r] \ar[d]^{0} &
k \ar[r] \ar[d]^{1} &
k \ar[r] \ar[d]^{1} &
k \rlap{\,)} \ar[d]^{1} \\
X \ar[d]_{g}  &
\llap{(\,} k \ar[r] \ar[d]^{1} &
k \ar[r] \ar[d]^{1} &
k \ar[r] \ar[d]^{1} &
0 \rlap{\,)} \ar[d]^{0} \\
\R_{\epsilon}M  &
\llap{(\,} k \ar[r] &
k \ar[r] &
k \ar[r] &
0 \rlap{\,)}
}
$}}
\end{equation}
%Hence $(f,g)$ gives an $\epsilon$-height-interleaving between $M$ and $X$.
Similarly, one obtains an $\epsilon$-height-interleaving between $X$ and $N$; we omit the details.
Since $0<\epsilon<1$ was arbitrary, we conclude $d_\phi(M,X)=d_{\phi}(X,N)=0$.

\smallskip
Next, we compute $d_\phi(M,N)$.
For $s:=C+\epsilon$ we have
\[
a^{\uparrow_s}=\{c,d\},\qquad b^{\uparrow_s} = \{d\}, \qquad c^{\uparrow_s}= d^{\uparrow_s}=\emptyset
\]
and 
\[
a^{\downarrow_s}= b^{\downarrow_s} = \emptyset, \qquad c^{\downarrow_s}=\{a\},\qquad d^{\downarrow_s}=\{a,b\}
\]
In this situation, there is an $s$-height-interleaving pair $(u,v)$ between $M$ and $N$ as follows. 
\begin{equation}\label{eq:MN-r-interleaving}
\vcenter{\hbox{$
\xymatrix@C=18pt@R=16pt{
\L_{s}N \ar[d]_{v^{\sharp}}  &
\llap{(\,}0 \ar[r] \ar[d]^0 &
0 \ar[r] \ar[d]^{0} &
k \ar[r] \ar[d]^{1} &
k \rlap{\,)} \ar[d]^{1} \\
M \ar[d]_{u} &
\llap{(\,} k \ar[r] \ar[d]^{1} &
k \ar[r] \ar[d]^{1} &
k \ar[r] \ar[d]^{0} &
k \rlap{\,)} \ar[d]^{0}  \\
\R_{s}N &
\llap{(\,} 0 \ar[r] &
0 \ar[r] &
0 \ar[r] &
0 \rlap{\,)} 
}
$}}
\qand  
\vcenter{\hbox{$
\xymatrix@C=18pt@R=16pt{
\L_{s}M \ar[d]_{u^{\sharp}} &
\llap{(\,} 0 \ar[r] \ar[d]^{0} &
0 \ar[r] \ar[d]^{0} &
k \ar[r] \ar[d]^{1} &
k \rlap{\,)} \ar[d]^{1} \\
N \ar[d]_{v} &
\llap{(\,} k \ar[r] \ar[d]^{1} &
k \ar[r] \ar[d]^{1} &
0 \ar[r] \ar[d]^{0} &
0 \rlap{\,)} \ar[d]^{0} \\
\R_{s}M  &
\llap{(\,} k \ar[r] &
k \ar[r] &
0 \ar[r] &
0 \rlap{\,)}
}
$}}
\end{equation}
Hence, $d_\phi(M,N)\leq s=C+\epsilon$. 
Since $0<\epsilon<1$ was arbitrary, this yields $d_\phi(M,N)\le C$. 

On the other hand, we show that $M$ and $N$ are \emph{not} $C$-height-interleaved.
Since $c^{\uparrow_C}=\{d\}$ and $c^{\downarrow_C}= \{a,b\}$, we have
\[
(\L_C M)(c)\cong k,\qquad M(c)\cong k,\qquad (\R_C M)(c)\cong k,
\]
and the canonical morphism $e_{C,M}(c)\colon (\L_C M)(c)\to (\R_C M)(c)$ is non-zero.
However, $N(c)=0$ forces any composite
$(\L_C M)(c)\to N(c)\to (\R_C M)(c)$
to be zero. 
Therefore, $M$ and $N$ are not $C$-height-interleaved.

By Lemma~\ref{lem:monotonicity-height-interleaving}, this implies that they are not $r$-height-interleaved for any $r\leq C$.
Hence, by definition of $d_\phi$ as an infimum, we have $d_\phi(M,N)\geq C$.

Combining these yields $d_\phi(M,N)=C$, as claimed.

\section{Stability properties}
\label{sec:stability}
In this section, we discuss stability properties of height-interleavings and the associated distances $d_{\rho}$.
These results provide further motivation for using height-interleaving distances to analyze functor categories.

\subsection{Pullback stability}
We first show that height-interleavings are functorial with respect to pullbacks along order-preserving maps; 
in particular, the associated distance is stable under pullback.

Let $\rho$ be a height-difference function on a poset $P$.
For an order-preserving map $f\colon Q\to P$, we write $f^*\rho$ for the induced height-difference function on $Q$, given by
\[
(f^*\rho)(q,q'):=\rho(f(q),f(q'))\qquad(q\leq q'). 
\]

\begin{proposition}\label{prop:pullback stability}
Let $\rho$ be a height-difference function on a poset $P$, and let $f\colon Q\to P$ be an order-preserving map.
Then for any $M,N \in \Fun(P, \mathcal{C})$, we have
\[
d_{f^*\rho}(f^*M,f^*N) \leq d_{\rho}(M,N).
\]
\end{proposition}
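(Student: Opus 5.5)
The plan is to transport an $r$-height-interleaving on $P$ to one on $Q$ by comparing the latching and matching functors before and after pullback. Once this is done for each fixed $r$, the inequality follows by taking infima. Write $\L_r'$, $\R_r'$ and $e'_r$ for the functors and natural transformation on $\Fun(Q,\mathcal{C})$ associated with $f^*\rho$. Write $\L_r$, $\R_r$ and $e_r$ for those on $\Fun(P,\mathcal{C})$ associated with $\rho$.

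\textbf{Step 1 (comparison maps).} For $x\in Q$, if $y\le x$ and $(f^*\rho)(y,x)\ge r$, then $f(y)\le f(x)$ and $\rho(f(y),f(x))\ge r$. Hence $f$ restricts to an order-preserving map $x^{\downarrow_r^{f^*\rho}}\to f(x)^{\downarrow_r^{\rho}}$, and similarly $x^{\uparrow_r^{f^*\rho}}\to f(x)^{\uparrow_r^{\rho}}$. The universal properties of colimits and limits then give canonical morphisms
\[
\lambda_M(x)\colon \colim (f^*M)|_{x^{\downarrow_r}}\to \colim M|_{f(x)^{\downarrow_r}}
\]
and
\[
\kappa_M(x)\colon \lim M|_{f(x)^{\uparrow_r}}\to \lim (f^*M)|_{x^{\uparrow_r}}.
\]
These are natural in $x$ and in $M$. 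We therefore obtain natural transformations $\lambda\colon \L_r' f^*\Rightarrow f^*\L_r$ and $\kappa\colon f^*\R_r\Rightarrow \R_r' f^*$. One checks directly on cocone and cone legs that $\eta^{\L}$ and $\eta^{\R}$ are compatible with these maps:
\[
\eta'^{\L}_{r,f^*M}=f^*(\eta^{\L}_{r,M})\circ\lambda_M,\qquad \eta'^{\R}_{r,f^*M}=\kappa_M\circ f^*(\eta^{\R}_{r,M}).
\]
Consequently $e'_{r,f^*M}=\kappa_M\circ f^*(e_{r,M})\circ\lambda_M$.

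\textbf{Step 2 (transport).} Let $(p,q)$ be an $r$-height-interleaving between $M$ and $N$ with respect to $\rho$. Set
\[
p':=\kappa_N\circ f^*p\colon f^*M\to \R_r'f^*N,\qquad q':=\kappa_M\circ f^*q\colon f^*N\to \R_r'f^*M.
\]
The key identity to verify is
\[
(p')^{\sharp}=f^*(p^{\sharp})\circ\lambda_M,
\]
where $\sharp$ on the left is taken for $\L'_r\dashv\R'_r$. Both sides are maps out of a colimit, so it suffices to compare them on each leg $x\in y^{\downarrow_r}$, where $y\in Q$. On that leg, the left side is $\beta'_{x,f^*N}(y)\circ\kappa_N(x)\circ p(f(x))$. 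By the definition of $\kappa$ this equals $\beta_{f(x),N}(f(y))\circ p(f(x))$, which is exactly the leg of $p^{\sharp}$ at $f(y)$ restricted along $f(x)$, i.e.\ the leg of the right side. The same argument applies to $q'$.

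\textbf{Step 3 (conclusion).} Combining Steps 1 and 2,
\[
q'\circ (p')^{\sharp}=\kappa_M\circ f^*q\circ f^*(p^\sharp)\circ\lambda_M=\kappa_M\circ f^*(e_{r,M})\circ\lambda_M=e'_{r,f^*M}.
\]
Symmetrically, $p'\circ(q')^{\sharp}=e'_{r,f^*N}$. So $f^*M$ and $f^*N$ are $r$-height-interleaved with respect to $f^*\rho$ whenever $M$ and $N$ are $r$-height-interleaved with respect to $\rho$. Taking the infimum over such $r$ gives the stated inequality; if $d_\rho(M,N)=\infty$ there is nothing to prove.

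The main obstacle is the bookkeeping in the identity $(p')^{\sharp}=f^*(p^\sharp)\circ\lambda_M$ and in the compatibility formula for $e'_r$. Both reduce to uniqueness of maps out of colimits and into limits, checked leg by leg. Note that $f$ need not be injective or an order-embedding, so $\lambda$ and $\kappa$ are in general not isomorphisms. This is why only an inequality, rather than an equality, is expected.
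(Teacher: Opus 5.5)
Your proposal is correct and follows the paper's proof essentially verbatim. Your $\lambda$ and $\kappa$ are the paper's comparison transformations $\xi_r^{\L}$ and $\xi_r^{\R}$ from Lemma~\ref{lem:pullbackLR}, your $p',q'$ are its $u,v$, and your key identity $(p')^{\sharp}=f^*(p^{\sharp})\circ\lambda_M$ is the one the paper uses; you also check leg by leg the compatibility $e'_{r,f^*M}=\kappa_M\circ f^*(e_{r,M})\circ\lambda_M$, which the paper leaves implicit.
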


To prove this, we need the following observation. 

\begin{lemma}\label{lem:pullbackLR}
Let $\rho$ be a height-difference function on a poset $P$, and let $f\colon Q\to P$ be an order-preserving map.
Then the following statements hold for $r\geq 0$.
\begin{enumerate}[\rm (1)]
\item For any $q\in Q$, we have
\[
f\bigl(q^{\downarrow_r^{f^*\rho}}\bigr) \subseteq (f(q))^{\downarrow_r^{\rho}}
\qand
f\bigl(q^{\uparrow_r^{f^*\rho}}\bigr) \subseteq (f(q))^{\uparrow_r^{\rho}}.
\]
\item There are natural transformations  
\[
\xi_r^{\L}\colon \L_r^{f^*\rho}f^* \Rightarrow f^*\L_r^{\rho}
\qand
\xi_r^{\R} \colon f^*\R_r^{\rho} \Rightarrow \R_r^{f^*\rho}f^* .
\]
\end{enumerate}
\end{lemma}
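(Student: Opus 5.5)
For (1), I would unwind the definition of $f^*\rho$ directly. Take $x\in q^{\downarrow_r^{f^*\rho}}$, so that $x\le q$ in $Q$ and $(f^*\rho)(x,q)\ge r$. Since $f$ is order-preserving, $f(x)\le f(q)$. By definition, $\rho(f(x),f(q))=(f^*\rho)(x,q)\ge r$, and therefore $f(x)\in f(q)^{\downarrow_r^{\rho}}$. The statement for upsets is the dual argument, applied to $y\ge q$ with $\rho(f(q),f(y))\ge r$. Note that $f$ need not be injective, so we only obtain an inclusion of images, not a bijection.

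For (2), I would build $\xi_r^{\L}$ componentwise from the universal property of colimits, in the same way $\mu^{\L}$ is built in Section~\ref{sec:latching_matching}. Fix $M\in\Fun(P,\mathcal{C})$ and $q\in Q$. The source is $(\L_r^{f^*\rho}f^*M)(q)=\colim_{x\in q^{\downarrow_r^{f^*\rho}}} M(f(x))$, and the target is $(f^*\L_r^\rho M)(q)=\colim M|_{f(q)^{\downarrow_r^{\rho}}}$.

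By (1), $f$ restricts to an order-preserving map $f_q\colon q^{\downarrow_r^{f^*\rho}}\to f(q)^{\downarrow_r^{\rho}}$. Hence the family
\[
\bigl(M(f(x))\xrightarrow{\alpha_{f(q),M}(f(x))}\L_r^\rho M(f(q))\bigr)_{x\in q^{\downarrow_r^{f^*\rho}}}
\]
is a cocone over $(f^*M)|_{q^{\downarrow_r^{f^*\rho}}}=M|_{f(q)^{\downarrow_r^\rho}}\circ f_q$; compatibility is inherited from the cocone $\alpha_{f(q),M}$. The universal property then gives a unique map $\xi^{\L}_{r,M}(q)$. Equivalently, this is the canonical comparison $\colim(D\circ f_q)\to\colim D$.

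The remaining steps are naturality checks, which follow from uniqueness in the universal property by comparing two maps after precomposition with the colimit coprojections.
\begin{itemize}
\item \emph{Naturality in $q$.} For $q\le q'$, both composites restrict on $M(f(x))$, $x\in q^{\downarrow_r^{f^*\rho}}$, to the coprojection into $\L_r^\rho M(f(q'))$. This uses $f(q)^{\downarrow_r}\subseteq f(q')^{\downarrow_r}$ from Lemma~\ref{lem:updownarrows}(2).
\item \emph{Naturality in $M$.} The same coprojection argument applies, using the defining squares for $\L_r g$.
\end{itemize}
I would construct $\xi_r^{\R}$ dually: the cone $\beta_{f(q),M}(f(y))$, for $y\in q^{\uparrow_r^{f^*\rho}}$, induces a map $\lim M|_{f(q)^{\uparrow_r^\rho}}\to\lim (f^*M)|_{q^{\uparrow_r^{f^*\rho}}}$.

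I expect no real obstacle here. The only subtle point is the bookkeeping when $f$ is not injective: the diagram indexed by $q^{\downarrow_r^{f^*\rho}}$ is a reindexing along $f_q$ rather than a subdiagram. Phrasing everything via the cocone $\alpha_{f(q),M}\circ f_q$ avoids any need to identify images.
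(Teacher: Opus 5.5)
Your proposal is correct and follows essentially the same route as the paper. Part (1) unwinds the definition of $f^*\rho$ and uses that $f$ is order-preserving. Part (2) induces $\xi_r^{\L}$ from the cocone obtained by reindexing along the order-preserving map $q^{\downarrow_r^{f^*\rho}}\to (f(q))^{\downarrow_r^{\rho}}$ supplied by (1), and obtains $\xi_r^{\R}$ dually. Your explicit naturality checks, via uniqueness on coprojections, spell out what the paper leaves as ``natural in $q$ and $M$''.
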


\begin{proof}
(1) By definition,
\[
f\bigl(q^{\downarrow_r^{f^*\rho}}\bigr)
=\{f(y)\mid y\leq q,\ \rho(f(y),f(q))\geq r\},
\]
and
\[
(f(q))^{\downarrow_r^{\rho}}
=\{x\in P\mid x\leq f(q),\ \rho(x,f(q))\geq r\}.
\]
Let $x\in f\bigl(q^{\downarrow_r^{f^*\rho}}\bigr)$. Then $x=f(y)$ for some $y\leq q$ with $\rho(f(y),f(q))\geq r$.
On the one hand, since $f$ is order-preserving we have $x=f(y)\leq f(q)$.
In addition, we have $\rho(x,f(q))=\rho(f(y),f(q))\geq r$.
Thus $x\in (f(q))^{\downarrow_r^{\rho}}$, proving $f(q^{\downarrow_r^{f^*\rho}})\subseteq (f(q))^{\downarrow_r^{\rho}}$.
The upset inclusion is similar.

(2) Let $M\in \Fun(P,\mathcal{C})$ and $q\in Q$. We compute
\[
(\L_r^{f^*\rho}(f^*M))(q)
=
\underset{y\in q^{\downarrow_r^{f^*\rho}}}{\colim} M(f(y))
\qand
(f^*(\L_r^{\rho}M))(q)
=(\L_r^{\rho}M)(f(q))
=\underset{x\in (f(q))^{\downarrow_r^{\rho}}}{\colim} M(x).
\]
By \textnormal{(1)}, the assignment $y\mapsto f(y)$ defines an order-preserving map
$q^{\downarrow_r^{f^*\rho}}\to (f(q))^{\downarrow_r^{\rho}}$.
Thus, by the universal property of colimits it induces a canonical morphism
\[
(\xi_{r,M}^{\L})(q)\colon (\L_r^{f^*\rho}(f^*M))(q)\to (f^*(\L_r^{\rho}M))(q).
\]
These maps are natural in $q$ and $M$, yielding a natural transformation
$\xi_r^{\L}\colon \L_r^{f^*\rho}f^* \Rightarrow f^*\L_r^{\rho}$.

Dually, using limits and the inclusion in \textnormal{(1)}, 
we obtain a natural transformation
$\xi_r^{\R}\colon f^*\R_r^{\rho} \Rightarrow \R_r^{f^*\rho}f^*$.
\end{proof}

\begin{proof}[Proof of Proposition~\ref{prop:pullback stability}]
Suppose that $M$ and $N$ are $r$-height-interleaved with respect to $\rho$, with interleaving morphisms
$p\colon M\to \R_r^{\rho} N$ and $q\colon N\to \R_r^{\rho} M$. 

Let $\xi_r^{\L}\colon \L_r^{f^*\rho}f^* \Rightarrow f^*\L_r^{\rho}$ and 
$\xi_r^{\R}\colon f^*\R_r^{\rho} \Rightarrow \R_r^{f^*\rho}f^*$ be as in Lemma~\ref{lem:pullbackLR}(2). 
Define
\[
u := \xi^{\R}_{r,N} \circ f^*(p)\colon f^*M\to \R_r^{f^*\rho}(f^*N),
\qquad
v := \xi^{\R}_{r,M}\circ f^*(q)\colon f^*N\to \R_r^{f^*\rho}(f^*M).
\]    

Then, by naturality of $\xi_r^{\L},\xi_r^{\R}$ and the definition of $u,v$, we obtain
\[
u^{\sharp}= f^*(p^{\sharp})\circ \xi^{\L}_{r,M},
\qquad
v^{\sharp}= f^*(q^{\sharp})\circ \xi^{\L}_{r,N}.
\]

This implies that 
\[
e_{r,f^*M}^{f^*\rho} = 
v\circ u^{\sharp}
\quad\text{and}\quad
e_{r,f^*N}^{f^*\rho} = 
u\circ v^{\sharp}.
\]
Hence $(u,v)$ forms an $r$-height-interleaving between $f^*M$ and $f^*N$ with respect to $f^*\rho$.
\end{proof}

\subsection{Functional stability}
\label{sec:functional_stability}
Next, we prove that height-interleaving distances are stable under perturbations of height-difference functions (Theorem~\ref{thm:functional stability}).
To state this result, we first record a basic monotonicity property of the assignment $\rho\mapsto d_{\rho}$.

\begin{proposition}\label{prop:monotonicity-rho}
Let $\rho_1,\rho_2$ be height-difference functions on $P$ such that $\rho_1\leq \rho_2$.
Then for any $M,N\in \Fun(P,\mathcal{C})$,
\[
d_{\rho_1}(M,N)\leq d_{\rho_2}(M,N).
\]
\end{proposition}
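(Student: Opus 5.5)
The plan is to show directly that every $r$-height-interleaving with respect to $\rho_2$ gives an $r$-height-interleaving with respect to $\rho_1$; taking infima then yields $d_{\rho_1}(M,N)\leq d_{\rho_2}(M,N)$, including the case where the right-hand side is $\infty$. This follows the proof of Proposition~\ref{prop:pullback stability}, with the identity map of $P$ in place of $f$. That proposition cannot be quoted directly, because its proof needs $f^*\rho$ to equal the target function. Here we only have an inequality, but the proof uses only inclusions of index sets, and those are exactly what the inequality gives.

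\textbf{Step 1: comparison maps.} If $x\leq a$ and $\rho_1(x,a)\geq r$, then $\rho_2(x,a)\geq \rho_1(x,a)\geq r$. Hence
\[
a^{\downarrow_r^{\rho_1}}\subseteq a^{\downarrow_r^{\rho_2}}
\qand
a^{\uparrow_r^{\rho_1}}\subseteq a^{\uparrow_r^{\rho_2}}
\]
for all $a\in P$ and $r\geq 0$. As in Lemma~\ref{lem:pullbackLR}(2), the universal properties of colimits and limits give canonical morphisms natural in $a$ and in $M$. These assemble into natural transformations
\[
\xi^{\L}\colon \L_r^{\rho_1}\Rightarrow \L_r^{\rho_2}
\qand
\xi^{\R}\colon \R_r^{\rho_2}\Rightarrow \R_r^{\rho_1}.
\]
Each is the map induced by enlarging (resp. shrinking) the indexing diagram.

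\textbf{Step 2: compatibilities.} Three identities need checking:
\[
\eta^{\L,\rho_2}_{r}\circ\xi^{\L}=\eta^{\L,\rho_1}_{r},\qquad
\xi^{\R}\circ\eta^{\R,\rho_2}_{r}=\eta^{\R,\rho_1}_{r},
\]
and, for $p\colon M\to\R_r^{\rho_2}N$ with $p'=\xi^{\R}_N\circ p$,
\[
(p')^{\sharp}=p^{\sharp}\circ\xi^{\L}_M,
\]
where $\sharp$ refers to the adjunction for $\rho_1$ on the left-hand side and for $\rho_2$ on the right-hand side (Proposition~\ref{prop:adjoint LrRr}). All three follow from uniqueness in the universal properties, by comparing both sides on the colimit coprojections $\alpha_{a,M}(x)$ for $x\in a^{\downarrow_r^{\rho_1}}$. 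In each case both sides evaluate to the same composite, for instance $\beta_{x,N}(a)\circ p(x)$ in the third identity.

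\textbf{Step 3: transfer the interleaving.} Let $(p,q)$ be an $r$-height-interleaving with respect to $\rho_2$, and set $p'=\xi^{\R}_N p$ and $q'=\xi^{\R}_M q$. Using Step 2,
\[
q'\circ p'^{\sharp}=\xi^{\R}_M\circ q\circ p^{\sharp}\circ\xi^{\L}_M=\xi^{\R}_M\circ e^{\rho_2}_{r,M}\circ\xi^{\L}_M=e^{\rho_1}_{r,M}.
\]
The identity $p'\circ q'^{\sharp}=e^{\rho_1}_{r,N}$ follows symmetrically, so $(p',q')$ is an $r$-height-interleaving with respect to $\rho_1$.

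The main obstacle is the identity $(p')^{\sharp}=p^{\sharp}\circ\xi^{\L}_M$, since it mixes two different adjunctions. I would verify it componentwise on coprojections, exactly as in the proof of Proposition~\ref{prop:pullback stability}.
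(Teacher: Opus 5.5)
Your proposal is correct and follows the paper's route. The inclusions $a^{\downarrow_r^{\rho_1}}\subseteq a^{\downarrow_r^{\rho_2}}$ and $a^{\uparrow_r^{\rho_1}}\subseteq a^{\uparrow_r^{\rho_2}}$ give transformations $\L_r^{\rho_1}\Rightarrow\L_r^{\rho_2}$ and $\R_r^{\rho_2}\Rightarrow\R_r^{\rho_1}$, and post-composing the interleaving maps with the latter yields a $\rho_1$-interleaving. Your Step 2 spells out the compatibility checks the paper leaves implicit, including the mixed-adjunction identity $(\xi^{\R}_N p)^{\sharp}=p^{\sharp}\xi^{\L}_M$. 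The only slip is that the $\R$-side identity is verified on limit projections, not colimit coprojections.
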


\begin{proof}
The assumption $\rho_1\leq \rho_2$ implies
\[
a^{\downarrow_r^{\rho_1}}\subseteq a^{\downarrow_r^{\rho_2}}
\qand
a^{\uparrow_r^{\rho_1}}\subseteq a^{\uparrow_r^{\rho_2}}
\]
for all $a\in P$ and $r\geq 0$.
Hence restriction along these inclusions induces natural transformations
\[
\L_r^{\rho_1}\Rightarrow \L_r^{\rho_2}
\qand 
\R_r^{\rho_2}\Rightarrow \R_r^{\rho_1}. 
\]
If $M$ and $N$ are $r$-height-interleaved with respect to $\rho_2$, composing the interleaving morphisms with $\R_r^{\rho_2}\Rightarrow \R_r^{\rho_1}$ gives an $r$-height-interleaving between $M$ and $N$ with respect to $\rho_1$.
Therefore $d_{\rho_1}(M,N)\leq d_{\rho_2}(M,N)$.
\end{proof}

For $a,b\in[0,\infty]$, we set
\[
|a-b|_{\infty}:=
\begin{cases}
|a-b| & a,b<\infty,\\
0 & a=b=\infty,\\
\infty & \text{otherwise}.
\end{cases}
\]
Note that for $r\geq 0$, the inequality $|a-b|_{\infty}\leq r$ is equivalent to say that
\[
a\leq b+r \qand b\leq a+r. 
\] 

For two height-difference functions $\rho_1,\rho_2$ on $P$, 
we define the \emph{distortion} between $\rho_1$ and $\rho_2$ by
\begin{equation}\label{eq:delta_rhos}
\delta(\rho_1,\rho_2)
:=\inf\Bigl\{r\geq 0 \Bigm| 
|\rho_1(a,b)-\rho_2(a,b)|_{\infty}\leq r\ \text{for all }a\leq b\Bigr\},
\end{equation}
and the distortion between the associated height-interleaving distances $d_{\rho_1}$ and $d_{\rho_2}$ by
\[
\mathrm{dist}_{\infty}(d_{\rho_1},d_{\rho_2})
:=
\inf\Bigl\{r\geq 0 \Bigm| 
|d_{\rho_1}(M,N)-d_{\rho_2}(M,N)|_{\infty}\leq r\ \text{for all }M,N \in \Fun(P,\mathcal{C}) \Bigr\}.
\]

Our result is the following.

\begin{theorem}\label{thm:functional stability}
The assignment $\rho\mapsto d_{\rho}$ is $1$-Lipschitz with respect to $\delta$.
That is, for any height-difference functions $\rho_1,\rho_2$ on $P$,
\begin{equation}\label{eq:functional stability}
\mathrm{dist}_{\infty}(d_{\rho_1},d_{\rho_2}) \leq \delta(\rho_1,\rho_2).
\end{equation}
\end{theorem}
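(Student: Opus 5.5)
We would prove the theorem by a comparison argument in the spirit of Proposition~\ref{prop:monotonicity-rho}, but shifting the scale by the distortion. If $\delta(\rho_1,\rho_2)=\infty$ there is nothing to prove. So fix a real number $\epsilon>\delta(\rho_1,\rho_2)$. Then for all $a\leq b$ we have $|\rho_1(a,b)-\rho_2(a,b)|_\infty\leq\epsilon$, that is,
\[
\rho_1(a,b)\leq\rho_2(a,b)+\epsilon \qand \rho_2(a,b)\leq \rho_1(a,b)+\epsilon .
\]
By the definition of $|\cdot|_\infty$, $\rho_1(a,b)$ and $\rho_2(a,b)$ are either both finite or both infinite. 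The core claim is: if $M,N$ are $r$-height-interleaved with respect to $\rho_2$, then they are $(r+\epsilon)$-height-interleaved with respect to $\rho_1$. Together with the symmetric claim, this gives $d_{\rho_1}\leq d_{\rho_2}+\epsilon$ and $d_{\rho_2}\leq d_{\rho_1}+\epsilon$ for all $M,N$. These inequalities also force $d_{\rho_1}(M,N)$ and $d_{\rho_2}(M,N)$ to be finite or infinite simultaneously, so $|d_{\rho_1}(M,N)-d_{\rho_2}(M,N)|_\infty\leq\epsilon$. Letting $\epsilon\downarrow\delta(\rho_1,\rho_2)$ yields \eqref{eq:functional stability}.

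\textbf{Step 1: inclusions of neighbourhoods.} If $x\leq a$ and $\rho_1(x,a)\geq r+\epsilon$, then $\rho_2(x,a)\geq\rho_1(x,a)-\epsilon\geq r$. This holds in the finite case, and trivially when both values are $\infty$. Hence
\[
a^{\downarrow_{r+\epsilon}^{\rho_1}}\subseteq a^{\downarrow_r^{\rho_2}} \qand a^{\uparrow_{r+\epsilon}^{\rho_1}}\subseteq a^{\uparrow_r^{\rho_2}}.
\]
As in the proof of Proposition~\ref{prop:monotonicity-rho}, these inclusions induce canonical natural transformations
\[
\theta^{\L}\colon \L_{r+\epsilon}^{\rho_1}\Rightarrow\L_r^{\rho_2} \qand \theta^{\R}\colon \R_r^{\rho_2}\Rightarrow\R_{r+\epsilon}^{\rho_1}.
\]
Since every map involved is induced by an inclusion of indexing posets into $a^{\downarrow}$ or $a^{\uparrow}$, uniqueness in the universal properties gives
\[
\eta^{\L,\rho_2}_{r}\circ\theta^{\L}=\eta^{\L,\rho_1}_{r+\epsilon} \qand \theta^{\R}\circ\eta^{\R,\rho_2}_r=\eta^{\R,\rho_1}_{r+\epsilon}.
\]
Consequently $e^{\rho_1}_{r+\epsilon}=\theta^{\R}\circ e^{\rho_2}_r\circ\theta^{\L}$.

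\textbf{Step 2: transport the interleaving.} Let $(p,q)$ be an $r$-height-interleaving with respect to $\rho_2$, and set $p':=\theta^{\R}_N\circ p$ and $q':=\theta^{\R}_M\circ q$. The key compatibility is that $\theta^{\L}$ and $\theta^{\R}$ are mates, namely
\[
p'^{\sharp}=p^{\sharp}\circ\theta^{\L}_M \qquad(\text{and likewise for } q').
\]
Here $\sharp$ on the left is taken for $\L^{\rho_1}_{r+\epsilon}\dashv\R^{\rho_1}_{r+\epsilon}$, and on the right for $\L^{\rho_2}_{r}\dashv\R^{\rho_2}_{r}$. We would check this pointwise using the explicit description of $(-)^\sharp$ from Proposition~\ref{prop:adjoint LrRr}. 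At $a$, both sides are the unique map out of $\colim M|_{a^{\downarrow^{\rho_1}_{r+\epsilon}}}$ whose restriction to $M(x)$ is $\beta_{x,N}(a)\circ p(x)$. This uses that the limit projection of $\R^{\rho_1}_{r+\epsilon}N(x)$ at $a$, precomposed with $\theta^{\R}$, is the limit projection of $\R^{\rho_2}_rN(x)$ at $a$; this is legitimate since $a\in x^{\uparrow^{\rho_1}_{r+\epsilon}}\subseteq x^{\uparrow^{\rho_2}_r}$. Granting this,
\[
q'\circ p'^{\sharp}=\theta^{\R}_M\circ q\circ p^{\sharp}\circ\theta^{\L}_M=\theta^{\R}_M\circ e^{\rho_2}_{r,M}\circ\theta^{\L}_M=e^{\rho_1}_{r+\epsilon,M},
\]
and symmetrically $p'\circ q'^{\sharp}=e^{\rho_1}_{r+\epsilon,N}$. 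So $(p',q')$ is an $(r+\epsilon)$-height-interleaving with respect to $\rho_1$.

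The main obstacle is Step 2's mate identity $p'^\sharp=p^\sharp\circ\theta^{\L}_M$. It mixes two different adjunctions and requires carefully matching the colimit injections and limit projections across the two $\rho$'s. We would handle it pointwise through universal properties as indicated, rather than through the general mates formalism. The remaining points are routine bookkeeping: the infinite values in Step 1, and passing from interleavings at each $r>d_{\rho_2}(M,N)$ to the infimum. For the latter, note that if $d_{\rho_2}(M,N)<\infty$, then for every $r>d_{\rho_2}(M,N)$ an $r$-interleaving exists by Lemma~\ref{lem:monotonicity-height-interleaving}.
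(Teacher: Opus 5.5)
Your proposal is correct and follows the paper's argument: you shift the scale by the distortion, use the inclusions $a^{\downarrow^{\rho_1}_{r+\epsilon}}\subseteq a^{\downarrow^{\rho_2}_r}$ and $a^{\uparrow^{\rho_1}_{r+\epsilon}}\subseteq a^{\uparrow^{\rho_2}_r}$ to transport an interleaving exactly as in Proposition~\ref{prop:monotonicity-rho}, and then symmetrize and pass to infima. Compared with the paper, you also verify explicitly the mate compatibility $p'^{\sharp}=p^{\sharp}\circ\theta^{\L}_M$, which the paper leaves implicit, and working with $\epsilon>\delta(\rho_1,\rho_2)$ rather than with $\delta(\rho_1,\rho_2)$ itself is a harmless variation.
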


\begin{proof}   
If $\delta(\rho_1,\rho_2)=\infty$, then the claim is clear. 
Assume $\delta(\rho_1,\rho_2)<\infty$. In this case, we may write \eqref{eq:delta_rhos} as 
\[
\delta(\rho_1,\rho_2)=\sup_{a\leq b}\bigl|\rho_1(a,b)-\rho_2(a,b)\bigr|_\infty.
\]
Set $r:=\delta(\rho_1,\rho_2)$. Then, for all $a\leq b$,
\[
\rho_2(a,b)\leq \rho_1(a,b)+r
\qand
\rho_1(a,b)\leq \rho_2(a,b)+r
\]

Fix $M,N\in\Fun(P,\mathcal C)$.
Suppose that $M$ and $N$ are $s$-height-interleaved with respect to $\rho_1$ for some $s\geq 0$.
Since $\rho_2(a,b)\le \rho_1(a,b)+r$ for all $a\le b$, 
the same construction as in Proposition~\ref{prop:monotonicity-rho} yields an $(s+r)$-height-interleaving between $M$ and $N$ with respect to $\rho_2$.
Hence $d_{\rho_2}(M,N)\leq s+r$.
Taking the infimum over such $s$ shows that
$d_{\rho_2}(M,N)\leq d_{\rho_1}(M,N)+r$.
By symmetry, we also have $d_{\rho_1}(M,N)\leq d_{\rho_2}(M,N)+r$.
Therefore,
\[
|d_{\rho_1}(M,N)-d_{\rho_2}(M,N)|_{\infty}\leq r. 
\]
Since this holds for all $M,N\in\Fun(P,\mathcal C)$, we obtain 
\[
\mathrm{dist}_{\infty}(d_{\rho_1},d_{\rho_2})\leq r = \delta(\rho_1,\rho_2),
\]
as desired. 
\end{proof}

\section{Distances with relaxed triangle inequalities}
\label{sec:distances}
Recall that the height-interleaving distance $d_{\rho}$ is symmetric and vanishes on the diagonal, but it may fail to satisfy the triangle inequality. 
This motivates us to introduce the following terminology.
We say that $d_{\rho}$ satisfies a  (additively) \emph{$c$-relaxed triangle inequality} if
\begin{equation}\label{eq:relaxed TI}
    d_{\rho}(M,N) \leq d_{\rho}(M,X) + d_{\rho}(X,N) + c \quad \text{for all $M,N,X \in \Fun(P,\mathcal{C})$}. 
\end{equation}
We refer to such a constant $c$ as a \emph{defect} (of the triangle inequality). 

When $c=0$, the $c$-relaxed triangle inequality reduces to the usual triangle inequality, 
yielding that $d_{\rho}$ with $c=0$ is a genuine extended pseudo-distance. 
\bigskip

The $c$-relaxed triangle inequality typically arises when composing an $s$-height-interleaving and an $r$-height-interleaving produces an $(s+r+c)$-height-interleaving. 
To formalize this, assume that for all $s,r\geq 0$ there are natural transformations
\begin{equation}\label{eq:sigmaL-family}
    \sigma^{\L}_{s,r}\colon \L_{s+r+c}\Longrightarrow \L_s\L_r
    \qquad (s,r\ge 0)
\end{equation}
that provide factorizations 
\begin{equation} \label{eq:sigmaL-factor}
\eta^{\L}_{s+r+c,s+r}\;=\;\left(
\L_{s+r+c} \overset{{\sigma^{\L}_{s,r}}}{\Longrightarrow} \L_s\L_r
\overset{{\mu^{\L}_{s,r}}}{\Longrightarrow} \L_{s+r}
\right).
\end{equation}
By the mates correspondence for two adjunctions 
(Appendix~\ref{appendix:two-adjunction-mates})
\begin{equation}\label{eq:mate-bijection}
\mathrm{Nat}(\L_{s+r+c},\L_s\L_r)
\;\cong\;
\mathrm{Nat}(\R_r\R_s,\R_{s+r+c}),
\end{equation} 
each natural transformation
$\sigma^{\L}_{s,r}\colon \L_{s+r+c}\Rightarrow \L_s\L_r$
uniquely determines a corresponding natural transformation
$\sigma^{\R}_{r,s}\colon \R_r\R_s\Rightarrow \R_{s+r+c}$.
Applying \eqref{eq:mate-bijection} with $s$ and $r$ exchanged, we also write $\sigma^{\R}_{s,r}\colon \R_s\R_r\Rightarrow \R_{s+r+c}$ for the mate of $\sigma^{\L}_{r,s}$.
With this choice, it provides a factorization 
\begin{equation}\label{eq:sigmaR-factor}
\eta^{\R}_{r+s,r+s+c} = 
\Big(
\R_{s+r}
\overset{\mu^{\R}_{r,s}}{\Longrightarrow}
\R_r\R_s
\overset{\sigma^{\R}_{r,s}}{\Longrightarrow}
\R_{s+r+c}
\Big).
\end{equation}

Now, we suppose that $M$ and $X$ are $s$-height-interleaved via $p\colon M\to \R_s X$ and $q\colon X\to \R_s M$, 
and that $X$ and $N$ are $r$-height-interleaved via $u\colon X\to \R_r N$ and $v\colon N\to \R_r X$.
Then, we define morphisms
\begin{eqnarray}    
f  &:=&
M \xrightarrow{p} \R_s X \xrightarrow{\R_s(u)} \R_s\R_r N
\xrightarrow{\;\sigma_{s,r,N}^{\R}\;} \R_{s+r+c}N, 
\qquad \\ 
g & :=&
N \xrightarrow{v} \R_r X \xrightarrow{\R_r(q)} \R_r\R_s M
\xrightarrow{\;\sigma_{r,s,M}^{\R}\;} \R_{s+r+c}M,
\end{eqnarray}
and we let $f^{\sharp}\colon \L_{s+r+c}M\to N$ and $g^{\sharp}\colon \L_{s+r+c}N\to M$ be their left adjoints.
Using the factorizations \eqref{eq:sigmaL-factor} and \eqref{eq:sigmaR-factor}, 
one checks that
\[
e_{s+r+c,M} = g\circ f^{\sharp},
\qquad
e_{s+r+c,N} = f\circ g^{\sharp},
\]
so the pair $(f,g)$ forms an $(s+r+c)$-height-interleaving between $M$ and $N$. 
Hence \eqref{eq:relaxed TI} holds for all $M,N,X\in\Fun(P,\mathcal C)$, i.e. $d_{\rho}$ satisfies a $c$-relaxed triangle inequality.

\subsection{Relaxed triangle inequalities under CIP}

In this subsection, we show that the connected intersections property (CIP) for $(P,\rho)$ (Definition~\ref{def:CIP}) provides a canonical way to construct a family of natural transformations \eqref{eq:sigmaL-family}, where the resulting defect $c$ can be bounded by a constant determined by $\rho$ (Theorem~\ref{thm:relaxed TI}). 

Let $P$ be a poset with a height-difference function $\rho$.
Fix $a\in P$ and $s,r\geq 0$. We set
\begin{equation}\label{eq:iterated-updown}
    a^{\downarrow_s\downarrow_r}
    := \bigcup_{x\in a^{\downarrow_s}} x^{\downarrow_r}
    \subseteq a^{\downarrow_{s+r}}
    \qand
    a^{\uparrow_r\uparrow_s}
    := \bigcup_{x\in a^{\uparrow_r}} x^{\uparrow_s}
    \subseteq a^{\uparrow_{s+r}} .
\end{equation}
The inclusions in \eqref{eq:iterated-updown} follow from \eqref{eq:superadditive}.

These subsets enjoy properties analogous to Lemma~\ref{lem:updownarrows}:
\begin{itemize}
    \item For any $a\leq b$ in $P$, we have
    $a^{\downarrow_s\downarrow_r}\subseteq b^{\downarrow_s\downarrow_r}$
    and
    $b^{\uparrow_r\uparrow_s}\subseteq a^{\uparrow_r\uparrow_s}$.
    \item For any $s'\geq s$ and $r'\geq r$, we have
    $a^{\downarrow_{s'}\downarrow_{r'}} \subseteq a^{\downarrow_s\downarrow_r}$
    and
    $a^{\uparrow_{r'}\uparrow_{s'}} \subseteq a^{\uparrow_{r}\uparrow_{s}}$.
    \item We have $b\in a^{\downarrow_s\downarrow_r}$ if and only if
    $a\in b^{\uparrow_r\uparrow_s}$.
\end{itemize}

Next, for $M\in \Fun(P,\mathcal C)$ and $a\in P$, we define
\begin{equation}\label{eq:Trs-def}
    \T_{s,r}^{\L}M(a)
    := \colim M|_{a^{\downarrow_s\downarrow_r}}
    \qand
    \T_{r,s}^{\R}M(a)
    := \lim M|_{a^{\uparrow_r\uparrow_s}} .
\end{equation}
By the first two items above, the assignment \eqref{eq:Trs-def}
defines endofunctors
\[
    \T_{s,r}^{\L},\ \T_{r,s}^{\R}\colon \Fun(P,\mathcal C)\to \Fun(P,\mathcal C).
\]
We have an adjunction
$\T_{s,r}^{\L}\dashv \T_{r,s}^{\R}$, as shown in the same way as $\L_r\dashv \R_r$, using the equivalence 
$b\in a^{\downarrow_s\downarrow_r}$ if and only if $a\in b^{\uparrow_r\uparrow_s}$.

Moreover, the natural transformations $\mu_{s,r}^{\L}$ and $\mu_{r,s}^{\R}$ factor through these functors as 
\begin{equation}\label{eq:mu-factor-through-T}
    \mu_{s,r}^{\L} =
    \Big(
        \L_s\L_r
        \overset{\kappa_{s,r}^{\L}}{\Longrightarrow}
        \T_{s,r}^{\L}
        \overset{\tau_{s,r}^{\L}}{\Longrightarrow}
        \L_{s+r}
    \Big)
    \qand
    \mu_{r,s}^{\R} =
    \Big(
        \R_{r+s}
        \overset{\tau_{r,s}^{\R}}{\Longrightarrow}
        \T_{r,s}^{\R}
        \overset{\kappa_{r,s}^{\R}}{\Longrightarrow}
        \R_r\R_s
    \Big).
\end{equation}
In fact, the natural transformation 
$\tau_{s,r}^{\L}\colon \T_{s,r}^{\L} \Rightarrow \L_{s+r}$
is induced by the inclusion $a^{\downarrow_s\downarrow_r}\subseteq a^{\downarrow_{s+r}}$.
The natural transformation $\kappa^{\L}_{s,r}\colon \L_s\L_r\Rightarrow \T^{\L}_{s,r}$ is obtained by the same argument as for $\mu^{\L}_{s,r}$, using the inclusions 
$x^{\downarrow_r}\subseteq a^{\downarrow_s\downarrow_r}$ for $x\in a^{\downarrow_s}$. 
The constructions of $\tau_{r,s}^{\R}$ and $\kappa^{\R}_{r,s}$ are dual.

\subsubsection{A Fubini-type finality criterion for $\kappa$}
Fix $a\in P$ and $s,r\ge 0$. Set
\[
I:=a^{\downarrow_s},\qquad F(x):=x^{\downarrow_r}\subseteq P\quad (x\in I),
\qquad Q:=\bigcup_{x\in I}F(x)=a^{\downarrow_s\downarrow_r}.
\]
For a functor $D\in \Fun(Q,\mathcal{C})$, 
consider the canonical morphism
\begin{equation}\label{eq:Fubini-map}
\colim_{x\in I}\,\colim D|_{F(x)}\ \longrightarrow\ \colim D,
\end{equation}
natural in $D$ (see Appendix~\ref{appendix:iterated-colimits}). 
Applied to $D=M|_{Q}$, this is precisely the component
\begin{equation}\label{eq:Fubini-map_kappa}
\kappa^{\L}_{s,r,M}(a)\colon (\L_s\L_r M)(a)\longrightarrow (\T^{\L}_{s,r}M)(a).
\end{equation}

\begin{proposition}\label{prop:cofinality-kappaL}
In the above setting, 
if for every $q\in Q$ the subposet
\[
I_{s,r}(a,q):= a^{\downarrow_s}\cap q^{\uparrow_r}
\]
is connected, then the natural morphism \eqref{eq:Fubini-map} is an isomorphism for any $D\in \Fun(Q,\mathcal C)$.
\end{proposition}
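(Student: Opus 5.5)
The plan is to rewrite both sides of \eqref{eq:Fubini-map} as colimits over a single indexing category and then apply a finality (cofinality) criterion. Define the Grothendieck-type poset
\[
E:=\{(x,q)\in I\times Q \mid q\in F(x)\},\qquad (x,q)\leq (x',q') \iff x\leq x' \text{ and } q\leq q'.
\]
This is a poset, since $F(x)\subseteq F(x')$ for $x\leq x'$ by Lemma~\ref{lem:updownarrows}(2). It comes with projections $\pi_1\colon E\to I$ and $\pi_2\colon E\to Q$, both order-preserving. The first step is the standard Fubini identification
\[
\colim_{x\in I}\colim D|_{F(x)}\cong \colim_E D\circ\pi_2 .
\]
This is proved by comparing cocones: a cocone on $D\circ\pi_2$ is a family $(D(q)\to T)_{(x,q)\in E}$ compatible along $E$. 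Restricting to each fibre $\{x\}\times F(x)$ gives cocones over the $D|_{F(x)}$, and these are compatible in $x$. Conversely, a compatible family of such cocones reassembles into a cocone on $E$, because a relation $(x,q)\leq(x',q')$ factors as $(x,q)\leq(x',q)\leq(x',q')$. Under this identification, \eqref{eq:Fubini-map} becomes the canonical comparison $\colim_E D\pi_2\to\colim_Q D$ induced by $\pi_2$. I will check this compatibility against the construction in Appendix~\ref{appendix:iterated-colimits}.

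By the standard finality theorem, it then suffices to show that $\pi_2\colon E\to Q$ is final. That is, for each $q\in Q$ the comma category $q\downarrow\pi_2$ must be nonempty and connected. Since these are posets, this comma category is the subposet
\[
E_q:=\{(x,q')\in E\mid q\leq q'\}.
\]
I will relate $E_q$ to $I_{s,r}(a,q)=a^{\downarrow_s}\cap q^{\uparrow_r}$. Note first that $q\in F(x)=x^{\downarrow_r}$ if and only if $x\in q^{\uparrow_r}$, by Lemma~\ref{lem:updownarrows}(4). So $x\mapsto(x,q)$ embeds $I_{s,r}(a,q)$ into $E_q$, and it is an order-embedding. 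Nonemptiness of $E_q$ holds because $q\in Q$, so some $x\in I$ has $q\in F(x)$.

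The main obstacle is connectivity of $E_q$. Given $(x,q')\in E_q$, we have $q\leq q'$ and $q'\in x^{\downarrow_r}$, so $\rho(q',x)\geq r$. We need $\rho(q,x)\geq r$ in order to connect $(x,q')$ to $(x,q)$ via $(x,q)\leq(x,q')$. Superadditivity \eqref{eq:superadditive} gives
\[
\rho(q,x)\geq \rho(q,q')+\rho(q',x)\geq r,
\]
which is exactly what is needed. Thus every element of $E_q$ is comparable to an element of the image of $I_{s,r}(a,q)$. Moreover, comparabilities inside $I_{s,r}(a,q)$ persist in $E_q$. Hence $E_q$ is connected if and only if $I_{s,r}(a,q)$ is, and the latter holds by hypothesis.

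Finality of $\pi_2$ then gives that \eqref{eq:Fubini-map} is an isomorphism for every $D$. Naturality in $D$ is automatic from the universal properties.
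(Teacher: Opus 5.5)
Your proof is correct and takes essentially the same route as the paper: you identify the iterated colimit with a colimit over the Grothendieck poset $\int_I F$ (your $E$), then prove that the projection to $Q$ is final by reducing connectivity of $(q\downarrow\pi)$ to connectivity of $I_{s,r}(a,q)$, using that each fibre has minimum $(x,q)$. Your explicit superadditivity check that $x^{\downarrow_r}$ is a downset is exactly the hypothesis required by the paper's appendix criterion (Proposition~\ref{prop:finality-int-to-Q}), which the main-text proof leaves implicit.
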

\begin{proof}
Let $D\in \Fun(Q,\mathcal{C})$ be a functor.
With $I,F,Q$ as above, the morphism \eqref{eq:Fubini-map} agrees with the canonical map
\eqref{eq:iterated-colim-to-Q} in Appendix~\ref{appendix:iterated-colimits}.
Moreover, for $q\in Q$ the corresponding index subposet is
\[
I_q=\{x\in I\mid q\in F(x)\}
=\{x\in a^{\downarrow_s}\mid q\in x^{\downarrow_r}\}
=a^{\downarrow_s}\cap q^{\uparrow_r}
=I_{s,r}(a,q).
\]
Hence, by Proposition~\ref{prop:finality-int-to-Q} (the finality criterion in Appendix~\ref{appendix:iterated-colimits}),
if $I_{s,r}(a,q)$ is connected for every $q\in Q$, then
\eqref{eq:Fubini-map} is an isomorphism.
\end{proof}

Now, we introduce the following terminology.

\begin{definition}\label{def:CIP}
    We say that $(P,\rho)$ has the \emph{connected intersections property} (CIP) if for any $a,q\in P$ and $s,r\geq0$, the set $I_{s,r}(a,q)$ is empty or connected. 
    Note that $I_{s,r}(a,q)$ is non-empty if and only if $q\in a^{\downarrow_s\downarrow_r}$, if and only if $a\in q^{\uparrow_r\uparrow_s}$.
\end{definition}

\begin{corollary}\label{cor:CIP-kappa-iso}
If $(P,\rho)$ satisfies CIP, then for all $s,r\ge 0$ the natural transformations
$\kappa^{\L}_{s,r}\colon \L_s\L_r\Rightarrow \T^{\L}_{s,r}$ and 
$\kappa^{\R}_{r,s}\colon \T^{\R}_{r,s}\Rightarrow \R_r\R_s$ in \eqref{eq:mu-factor-through-T} are isomorphisms. 
\end{corollary}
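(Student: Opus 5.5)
The plan is to treat $\kappa^{\L}_{s,r}$ directly with Proposition~\ref{prop:cofinality-kappaL} and then obtain the statement for $\kappa^{\R}_{r,s}$ by duality.

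\emph{Step 1: the latching side.} Fix $s,r\ge 0$, $M\in\Fun(P,\mathcal C)$ and $a\in P$, and put $Q:=a^{\downarrow_s\downarrow_r}$. By \eqref{eq:Fubini-map_kappa}, the component $\kappa^{\L}_{s,r,M}(a)$ is the canonical map \eqref{eq:Fubini-map} applied to $D=M|_Q$.

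For every $q\in Q$ the set $I_{s,r}(a,q)$ is non-empty; this is the remark in Definition~\ref{def:CIP}, and it also follows directly since $q\in x^{\downarrow_r}$ for some $x\in a^{\downarrow_s}$. CIP then says that $I_{s,r}(a,q)$ is connected for all $q\in Q$. Proposition~\ref{prop:cofinality-kappaL} therefore gives that $\kappa^{\L}_{s,r,M}(a)$ is an isomorphism. Since a natural transformation between functors into $\Fun(P,\mathcal C)$ is an isomorphism exactly when all of its components are, and isomorphisms in $\Fun(P,\mathcal C)$ are detected pointwise, $\kappa^{\L}_{s,r}$ is a natural isomorphism.

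\emph{Step 2: the matching side.} I would run the dual argument. The component $\kappa^{\R}_{r,s,M}(a)$ is the canonical map
\[
\lim M|_{a^{\uparrow_r\uparrow_s}}\longrightarrow \lim_{y\in a^{\uparrow_r}}\lim M|_{y^{\uparrow_s}}.
\]
Apply the dual of Proposition~\ref{prop:finality-int-to-Q}, i.e. the same statement for the opposite posets with values in $\mathcal C^{\mathrm{op}}$, which is again complete and cocomplete. The relevant index sets are then
\[
a^{\uparrow_r}\cap q^{\downarrow_s}\qquad\text{for } q\in a^{\uparrow_r\uparrow_s}.
\]
By Lemma~\ref{lem:updownarrows}(4), this set equals $I_{r,s}(q,a)=q^{\downarrow_r}\cap a^{\uparrow_s}$ with the roles of $a$ and $q$ exchanged. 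Concretely, $y$ lies in it iff $a\le y\le q$, $\rho(a,y)\ge r$ and $\rho(y,q)\ge s$, which is exactly membership in $I_{r,s}(q,a)$. Since CIP quantifies over all $a,q$ and all $s,r$, this set is connected whenever it is non-empty, so $\kappa^{\R}_{r,s}$ is an isomorphism.

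\emph{Alternative for Step 2 and the main obstacle.} Instead of dualizing the finality argument, one could note that $\kappa^{\R}_{r,s}$ is the mate of $\kappa^{\L}_{s,r}$ under the adjunctions $\L_s\L_r\dashv\R_r\R_s$ and $\T^{\L}_{s,r}\dashv\T^{\R}_{r,s}$, and that mates of isomorphisms between left adjoints are isomorphisms. That route, however, requires checking that the two definitions of $\kappa^{\R}$ agree. I expect the only real obstacle to be the index bookkeeping in the direct dual route: making sure the swapped roles of $(a,q)$ and $(s,r)$ still fall under the CIP hypothesis. This is where I would be careful.
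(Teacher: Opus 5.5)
Your argument is correct and is essentially the paper's: for each $a$ you apply Proposition~\ref{prop:cofinality-kappaL} to $D=M|_{a^{\downarrow_s\downarrow_r}}$, using CIP together with non-emptiness of $I_{s,r}(a,q)$ for $q\in a^{\downarrow_s\downarrow_r}$, and then you dualize to handle $\kappa^{\R}_{r,s}$. There is one small index slip in Step~2, at exactly the point you flagged: the set $a^{\uparrow_r}\cap q^{\downarrow_s}$ (that is, $a\le y\le q$ with $\rho(a,y)\ge r$ and $\rho(y,q)\ge s$) equals $I_{s,r}(q,a)=q^{\downarrow_s}\cap a^{\uparrow_r}$, not $I_{r,s}(q,a)=q^{\downarrow_r}\cap a^{\uparrow_s}$, but this does not affect the conclusion because CIP is quantified over all $s,r\ge 0$.
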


\begin{proof}
Fix $s,r\ge 0$ and $M\in \Fun(P,\mathcal C)$.
For each $a\in P$, by CIP we may apply Proposition~\ref{prop:cofinality-kappaL}
to the canonical morphism \eqref{eq:Fubini-map} for the functor $D=M|_{Q}$,
where $Q=a^{\downarrow_s\downarrow_r}$.
Hence the component $\kappa^{\L}_{s,r,M}(a)$ in \eqref{eq:Fubini-map_kappa}
is an isomorphism. 
Since \eqref{eq:Fubini-map} is natural in $D$, these isomorphisms are natural in $M$
(and in $a$), so $\kappa^{\L}_{s,r}$ is a natural isomorphism.
The statement for $\kappa^{\R}_{r,s}$ follows dually.
\end{proof}

\subsubsection{The defect constant and comparison morphisms}

Thanks to Corollary~\ref{cor:CIP-kappa-iso}, under CIP we may replace
$\L_s\L_r$ and $\R_r\R_s$ by $\T^{\L}_{s,r}$ and $\T^{\R}_{r,s}$ via the isomorphisms $\kappa^{\L}_{s,r}$ and $\kappa^{\R}_{r,s}$, respectively.
Hence, in order to obtain \eqref{eq:sigmaL-family} 
it suffices to compare $\L_{s+r+c}$ with $\T^{\L}_{s,r}$, and dually $\T^{\R}_{r,s}$ with $\R_{s+r+c}$. 

For this, we consider the following approximate intermediate-value property of $\rho$.

\begin{description}
\item[(IV$_c$)]
For any $a\leq b$ in $P$ and any $t\in[0,\infty)$ with 
$t\leq \rho(a,b)$, there exists $z\in P$ with $a\leq z\leq b$ such that
\[
\bigl|\rho(a,z)-t\bigr|_{\infty}\leq \frac{c}{2}
\qand
\bigl|\rho(z,b)-\bigl(\rho(a,b)-t\bigr)\bigr|_{\infty}\leq \frac{c}{2}.
\]
\end{description}

Note that (IV$_c$) is monotone in $c$: if (IV$_c$) holds, then so does (IV$_{c'}$) for every $c'\geq c$.
We define the constant associated with $\rho$ by 
\begin{equation}\label{eq:c-rho}
c(\rho):=\inf\{\,c\ge 0 \mid (P,\rho)\text{ satisfies (IV$_c$)}\,\}.
\end{equation}

The next lemma shows that (IV$_c$) indeed provides the required comparison morphisms. 

\begin{lemma}\label{lem:theta_plusc}
Assume that $(P,\rho)$ satisfies {\rm (IV$_{c}$)} for $c\geq 0$. 
Then, for any $s,r\geq 0$ there are natural transformations 
\[
\theta_{s,r}^{\L} \colon \L_{s+r+c}\Rightarrow \T_{s,r}^{\L}
\qand
\theta_{r,s}^{\R} \colon \T_{r,s}^{\R} \Rightarrow \R_{s+r+c}
\]
satisfying  
\[
\eta_{s+r+c,s+r}^{\L} = \big( 
\L_{s+r+c}
\overset{\theta_{s,r}^{\L}}{\Longrightarrow} 
\T_{s,r}^{\L} 
\overset{\tau_{s,r}^{\L}}{\Longrightarrow} 
\L_{s+r} 
\big) 
\qand 
\eta_{s+r,s+r+c}^{\R} = \big( 
\R_{s+r} 
\overset{\tau_{r,s}^{\R}}{\Longrightarrow} 
\T_{r,s}^{\R} 
\overset{\theta_{r,s}^{\R}}{\Longrightarrow} 
\R_{s+r+c} 
\big).  
\]
\end{lemma}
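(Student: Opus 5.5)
The strategy is to reduce both natural transformations to the set-theoretic inclusion
\[
a^{\downarrow_{s+r+c}} \subseteq a^{\downarrow_s\downarrow_r}
\qquad (a\in P),
\]
together with its dual $a^{\uparrow_r\uparrow_s}\subseteq a^{\uparrow_{s+r+c}}$. Once these inclusions hold, $\theta^{\L}_{s,r}$ is simply the canonical morphism $\colim M|_{a^{\downarrow_{s+r+c}}}\to \colim M|_{a^{\downarrow_s\downarrow_r}}$ induced by inclusion of index posets. It is natural in $a$ because both families are monotone in $a$ (Lemma~\ref{lem:updownarrows}(2) and the bullet list after \eqref{eq:iterated-updown}), and it is natural in $M$ by the universal property. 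The factorization follows from the fact that $\tau^{\L}_{s,r}$ is induced by the inclusion $a^{\downarrow_s\downarrow_r}\subseteq a^{\downarrow_{s+r}}$ and $\eta^{\L}_{s+r+c,s+r}$ by $a^{\downarrow_{s+r+c}}\subseteq a^{\downarrow_{s+r}}$. Maps between colimits induced by a composite of inclusions compose, and uniqueness in the universal property gives $\tau^{\L}_{s,r}\circ\theta^{\L}_{s,r}=\eta^{\L}_{s+r+c,s+r}$. The $\R$ side is dual, using limits and $a^{\uparrow_r\uparrow_s}\subseteq a^{\uparrow_{s+r+c}}$. Alternatively, one could take $\theta^{\R}_{r,s}$ to be the mate of $\theta^{\L}_{s,r}$ under $\T^{\L}_{s,r}\dashv\T^{\R}_{r,s}$ and $\L_{s+r+c}\dashv\R_{s+r+c}$, but the direct construction is cleaner.

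The key step is the inclusion itself. Let $q\in a^{\downarrow_{s+r+c}}$, so that $q\le a$ and $\rho(q,a)\ge s+r+c$. If $\rho(q,a)=\infty$, the intermediate value property (IV$_c$) as stated still applies to the finite value $t$ chosen below, so we treat that case the same way. We need some $x$ with $q\le x\le a$, $\rho(x,a)\ge s$ and $\rho(q,x)\ge r$. Apply (IV$_c$) to the pair $q\le a$ with the choice
\[
t := r + \tfrac{c}{2},
\]
which satisfies $t\le\rho(q,a)$. This yields $z$ with $q\le z\le a$, $\rho(q,z)\ge t-\tfrac c2=r$, and $|\rho(z,a)-(\rho(q,a)-t)|_\infty\le \tfrac c2$. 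The last condition gives
\[
\rho(z,a)\ \ge\ \rho(q,a)-r-\tfrac c2-\tfrac c2\ \ge\ s.
\]
In the infinite case, $|\cdot|_\infty\le c/2$ forces $\rho(z,a)=\infty\ge s$. Hence $z\in a^{\downarrow_s}$ and $q\in z^{\downarrow_r}$, so $q\in a^{\downarrow_s\downarrow_r}$.

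The dual inclusion follows from the symmetric statement via Lemma~\ref{lem:updownarrows}(4) and the equivalence $b\in a^{\downarrow_s\downarrow_r}\iff a\in b^{\uparrow_r\uparrow_s}$. Alternatively, one can apply (IV$_c$) directly to $a\le q$ with $t=r+c/2$.

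The main obstacle is arranging the choice of $t$ so that the two $c/2$ tolerances from (IV$_c$) are absorbed by the extra $c$ exactly, and handling the value $\infty$ in $|\cdot|_\infty$. Everything else is the standard functoriality of colimits and limits along inclusions of index posets.
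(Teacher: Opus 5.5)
Your proof is correct and follows the paper's route: derive $a^{\downarrow_{s+r+c}}\subseteq a^{\downarrow_s\downarrow_r}$ from (IV$_c$), take the induced colimit map, and read off the factorization from the chain of inclusions. Your single choice $t=r+c/2$ also covers the case $\rho(q,a)=\infty$, whereas the paper splits into cases ($t=\rho(y,a)-s-c/2$ when finite, $t=r+c$ when infinite). However, you twice write the $\R$-side inclusion backwards. What is needed, and what your Lemma~\ref{lem:updownarrows}(4) argument actually yields, is $a^{\uparrow_{s+r+c}}\subseteq a^{\uparrow_r\uparrow_s}$. The reverse inclusion is false in general (e.g.\ on $\mathbb{R}$ with $c>0$), and it would only induce a map $\R_{s+r+c}\Rightarrow\T^{\R}_{r,s}$, which points the wrong way.
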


\begin{proof}
We construct $\theta^{\L}_{s,r}$ and prove the claimed identity for $\eta^{\L}_{s+r+c,s+r}$. 
The statement for $\theta^{\R}_{r,s}$ and $\eta^{\R}_{s+r,s+r+c}$ follows dually.

Fix $s,r\geq 0$, $M\in\Fun(P,\mathcal C)$, and $a\in P$.
We first show the inclusion
\begin{equation}\label{eq:IVc-down-inclusion}
a^{\downarrow_{s+r+c}}\subseteq a^{\downarrow_s\downarrow_r}.
\end{equation}
Take $y\in a^{\downarrow_{s+r+c}}$. By definition, we have $y\leq a$ and $\rho(y,a)\geq s+r+c$.
If $\rho(y,a)<\infty$, set $t:=\rho(y,a)-s-c/2\in[0,\rho(y,a)]$. 
Applying {\rm (IV$_c$)} to $y\leq a$ and this $t$, there exists $z$ with $y\leq z\leq a$ such that
\[
\rho(y,z)\ge t-\frac{c}{2}
\qand
\rho(z,a)\ge (\rho(y,a)-t)-\frac{c}{2}.
\]
Since $\rho(y,a)-t=s+c/2$, we have $\rho(z,a)\ge s$, hence $z\in a^{\downarrow_s}$.
Moreover,
\[
\rho(y,z)\ge t-\frac{c}{2}=\rho(y,a)-s-c\ge r,
\]
so $y\in z^{\downarrow_r}$.
Thus $y\in a^{\downarrow_s\downarrow_r}$.
Otherwise, apply {\rm (IV$_c$)} to $y\le a$ with $t:=r+c$. 
Then there exists $y\leq z\leq a$ such that $\rho(y,z)\geq t-c/2\geq r$.
Since $\rho(y,a)=\infty$, the second inequality in {\rm (IV$_c$)} forces $\rho(z,a)=\infty$, and in particular $\rho(z,a)\geq s$.
Hence $y\in z^{\downarrow_r}$ and $z\in a^{\downarrow_s}$, so $y\in a^{\downarrow_s\downarrow_r}$.
This proves the inclusion \eqref{eq:IVc-down-inclusion}.

By restriction along the inclusion \eqref{eq:IVc-down-inclusion}, we obtain a canonical morphism
\[
(\L_{s+r+c}M)(a)=\colim M|_{a^{\downarrow_{s+r+c}}}\ \longrightarrow\
\colim M|_{a^{\downarrow_s\downarrow_r}}=(\T^{\L}_{s,r}M)(a).
\]
These maps are natural in $a$ and $M$, hence define a natural transformation
$\theta^{\L}_{s,r}\colon \L_{s+r+c} \Rightarrow \T^{\L}_{s,r}$.

Finally, for each $a$ the composite
\[
(\L_{s+r+c}M)(a)\xrightarrow{\theta^{\L}_{s,r,M}(a)}
(\T^{\L}_{s,r}M)(a)\xrightarrow{\tau^{\L}_{s,r,M}(a)}
(\L_{s+r}M)(a)
\]
is precisely the morphism induced by the chain of inclusions
$a^{\downarrow_{s+r+c}}\subseteq a^{\downarrow_s\downarrow_r}\subseteq a^{\downarrow_{s+r}}$,
and therefore coincides with $\eta^{\L}_{s+r+c,s+r,M}(a)$.

The construction of $\theta^{\R}_{r,s}$ and the identity for $\eta^{\R}_{s+r,s+r+c}$ are obtained dually, using the inclusion
$a^{\uparrow_{s+r+c}}\subseteq a^{\uparrow_r\uparrow_s}$ induced by {\rm (IV$_c$)}.
\end{proof}

Combining this lemma with Corollary~\ref{cor:CIP-kappa-iso}, we obtain the following result.

\begin{theorem}\label{thm:relaxed TI}
If $(P,\rho)$ has CIP, then $d_{\rho}$ satisfies a $c(\rho)$-relaxed triangle inequality. 
In particular, if $c(\rho)=0$, then $d_{\rho}$ is an extended pseudo-distance. 
\end{theorem}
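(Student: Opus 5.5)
The plan is to produce, for every $c>c(\rho)$ (or $c=c(\rho)$ when the infimum is attained), a family of natural transformations $\sigma^{\L}_{s,r}\colon \L_{s+r+c}\Rightarrow \L_s\L_r$ satisfying the factorization \eqref{eq:sigmaL-factor}. The general discussion at the beginning of this section then gives the $c$-relaxed triangle inequality, and we let $c\downarrow c(\rho)$ at the end. By monotonicity of (IV$_c$) in $c$ and the definition \eqref{eq:c-rho}, $(P,\rho)$ satisfies (IV$_c$) for every $c>c(\rho)$, so Lemma~\ref{lem:theta_plusc} applies for such $c$.

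Concretely, fix $c>c(\rho)$ and $s,r\ge 0$. By Corollary~\ref{cor:CIP-kappa-iso}, CIP makes $\kappa^{\L}_{s,r}\colon \L_s\L_r\Rightarrow \T^{\L}_{s,r}$ a natural isomorphism. Set
\[
\sigma^{\L}_{s,r}:=(\kappa^{\L}_{s,r})^{-1}\circ\theta^{\L}_{s,r}\colon \L_{s+r+c}\Rightarrow \L_s\L_r .
\]
Using \eqref{eq:mu-factor-through-T} and Lemma~\ref{lem:theta_plusc}, we get
\[
\mu^{\L}_{s,r}\circ\sigma^{\L}_{s,r}=\tau^{\L}_{s,r}\circ\kappa^{\L}_{s,r}\circ(\kappa^{\L}_{s,r})^{-1}\circ\theta^{\L}_{s,r}=\tau^{\L}_{s,r}\circ\theta^{\L}_{s,r}=\eta^{\L}_{s+r+c,s+r},
\]
which is exactly \eqref{eq:sigmaL-factor}. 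The mates $\sigma^{\R}_{r,s}$ are then defined through \eqref{eq:mate-bijection}. The factorization \eqref{eq:sigmaR-factor} should follow from functoriality of the mates correspondence with respect to pasting, since the mate of a composite is the composite of mates, $\mu^{\R}$ is the mate of $\mu^{\L}$, and $\eta^{\R}$ is the mate of $\eta^{\L}$. Alternatively, we can check directly that the mate of $\sigma^{\L}_{s,r}$ equals $\theta^{\R}_{r,s}\circ(\kappa^{\R}_{r,s})^{-1}$ and then apply the dual half of Lemma~\ref{lem:theta_plusc}.

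Next we verify the composite interleaving. Given an $s$-interleaving $(p,q)$ of $M,X$ and an $r$-interleaving $(u,v)$ of $X,N$, define $f,g$ as above and check $g\circ f^\sharp=e_{s+r+c,M}$. Write $f^\sharp$ as the adjoint transpose along the composite adjunction. Its $\L_s\L_r$-part is $u^\sharp\circ\L_r(p^\sharp)$, precomposed with $\sigma^{\L}_{s,r}$ (with the index order arranged suitably). Substituting the interleaving identities $q\circ p^\sharp=e_{s,M}$ and $v\circ u^\sharp=e_{r,X}$ and using naturality of $\eta^{\L},\eta^{\R}$, the composite collapses to
\[
\eta^{\R}\circ\mu^{\R}\circ\mu^{\L}\circ\sigma^{\L}\ \text{-type expression},
\]
which by the two factorizations equals $\eta^{\R}_{0,s+r+c}\circ\eta^{\L}_{s+r+c,0}=e_{s+r+c,M}$. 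The paper already asserts that this check goes through, so we would just carry it out.

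Finally, we take infima. If $d_\rho(M,X)$ or $d_\rho(X,N)$ is infinite there is nothing to prove. Otherwise, for $s>d_\rho(M,X)$ and $r>d_\rho(X,N)$ (interleavings exist at such $s,r$ by Lemma~\ref{lem:monotonicity-height-interleaving}), we get $d_\rho(M,N)\le s+r+c$. Letting $s,r,c$ decrease to their infima yields the $c(\rho)$-relaxed inequality. The case $c(\rho)=0$ gives the triangle inequality, and together with the symmetry and vanishing on the diagonal noted earlier this makes $d_\rho$ an extended pseudo-distance.

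The main obstacle is the bookkeeping of the mate correspondence. We must confirm that the mate of $\sigma^{\L}_{s,r}$ gives the factorization \eqref{eq:sigmaR-factor} with the index order $(r,s)$ versus $(s,r)$ matching how $f$ and $g$ use $\R_s\R_r$ and $\R_r\R_s$. We must also confirm that the mate of $(\kappa^{\L})^{-1}$ is $(\kappa^{\R})^{-1}$. I would settle both by the pasting compatibility of mates from Appendix~\ref{appendix:two-adjunction-mates}.
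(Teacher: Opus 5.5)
Your proposal is correct and follows essentially the same route as the paper. You define $\sigma^{\L}_{s,r}=(\kappa^{\L}_{s,r})^{-1}\circ\theta^{\L}_{s,r}$ using Corollary~\ref{cor:CIP-kappa-iso} and Lemma~\ref{lem:theta_plusc}, feed it into the composition argument from the start of the section, and let $c\downarrow c(\rho)$. Your derivation of \eqref{eq:sigmaR-factor} by functoriality of mates (the mate of $\mu^{\L}_{s,r}\circ\sigma^{\L}_{s,r}$ is $\sigma^{\R}_{r,s}\circ\mu^{\R}_{r,s}$) already suffices, so you do not need to identify the mate of $(\kappa^{\L})^{-1}$ with $(\kappa^{\R})^{-1}$; just note that $u^{\sharp}\circ\L_r(p^{\sharp})$ has domain $\L_r\L_sM$, so $f^{\sharp}$ is precomposed with $\sigma^{\L}_{r,s}$.
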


\begin{proof}
Fix $c\geq 0$ and assume that $(P,\rho)$ satisfies {\rm (IV$_c$)}.

Let $s,r\geq 0$.
By Corollary~\ref{cor:CIP-kappa-iso}, the natural transformation
$\kappa^{\L}_{s,r}\colon \L_s\L_r\Rightarrow \T^{\L}_{s,r}$ is an isomorphism.
By Lemma~\ref{lem:theta_plusc}, we also have a natural transformation
$\theta^{\L}_{s,r}\colon \L_{s+r+c}\Rightarrow \T^{\L}_{s,r}$
satisfying $\eta^{\L}_{s+r+c,s+r}=\tau^{\L}_{s,r}\circ\theta^{\L}_{s,r}$.

Define a natural transformation $\sigma^{\L}_{s,r}$ by 
\begin{equation}\label{eq:sigmaL_under_CIP}
\sigma^{\L}_{s,r}:=(\kappa^{\L}_{s,r})^{-1}\circ \theta^{\L}_{s,r}
\colon \L_{s+r+c}\Longrightarrow \L_s\L_r.
\end{equation}
Then, by construction, we have the factorization
\[
\eta^{\L}_{s+r+c,s+r}
=
\Big(
\L_{s+r+c}
\xRightarrow{\sigma^{\L}_{s,r}}
\L_s\L_r
\xRightarrow{\mu^{\L}_{s,r}}
\L_{s+r}
\Big),
\]
i.e.\ $\sigma^{\L}_{s,r}$ satisfies \eqref{eq:sigmaL-factor}.
Dually, we obtain natural transformations
$\sigma^{\R}_{r,s} \colon \R_r\R_s\Rightarrow \R_{r+s+c}$. 
By construction, they agree with the mates of $\sigma^{\L}_{s,r}$ under the bijection
\eqref{eq:mate-bijection}.

Therefore, the discussion at the beginning of this section applies (to the transformations $\sigma^{\L}_{s,r}$ and their mates $\sigma^{\R}_{r,s}$) and shows that,
for any $M,X,N\in \Fun(P,\mathcal C)$,
composing an $s$-height-interleaving between $M$ and $X$ with an $r$-height-interleaving between $X$ and $N$ yields an $(s+r+c)$-height-interleaving between $M$ and $N$.
Hence $d_{\rho}$ satisfies a $c$-relaxed triangle inequality.

Finally, since {\rm (IV$_c$)} is monotone in $c$, the same conclusion holds for every
$c>c(\rho)$. Taking the infimum over such $c$ yields that $d_{\rho}$ satisfies a
$c(\rho)$-relaxed triangle inequality. In particular, if $c(\rho)=0$, then $d_{\rho}$
is an extended pseudo-distance.
\end{proof}

The following corollary is a formal consequence of Theorem~\ref{thm:relaxed TI} and functional stability: 
once a relaxed triangle inequality is established via CIP for a given height-difference function, it automatically transfers to any other height-difference function, up to an additive defect controlled by their distortion.

\begin{corollary}\label{cor:RTI-transport}
Let $\rho_1,\rho_2$ be height-difference functions on a poset $P$.
If $(P,\rho_1)$ has CIP, then 
$d_{\rho_2}$ satisfies a $(c(\rho_1)+3\delta(\rho_1,\rho_2))$-relaxed triangle inequality. 
\end{corollary}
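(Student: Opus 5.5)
The argument is a formal combination of Theorem~\ref{thm:relaxed TI} applied to $\rho_1$ with the functional stability estimate of Theorem~\ref{thm:functional stability}. If $\delta:=\delta(\rho_1,\rho_2)=\infty$ or $c(\rho_1)=\infty$, the claimed defect is infinite and there is nothing to prove. So assume both are finite and write $c:=c(\rho_1)$.

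Fix $M,X,N\in\Fun(P,\mathcal{C})$.

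\textbf{Step 1: move to $\rho_1$.} Theorem~\ref{thm:functional stability} gives, for every pair of objects, $|d_{\rho_1}-d_{\rho_2}|_\infty\le\delta$. In particular
\[
d_{\rho_1}(M,X)\le d_{\rho_2}(M,X)+\delta
\qand
d_{\rho_1}(X,N)\le d_{\rho_2}(X,N)+\delta .
\]

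\textbf{Step 2: triangle inequality for $\rho_1$.} Since $(P,\rho_1)$ has CIP, Theorem~\ref{thm:relaxed TI} gives
\[
d_{\rho_1}(M,N)\le d_{\rho_1}(M,X)+d_{\rho_1}(X,N)+c .
\]

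\textbf{Step 3: move back to $\rho_2$.} Functional stability in the other direction gives $d_{\rho_2}(M,N)\le d_{\rho_1}(M,N)+\delta$. Chaining the three steps yields
\[
d_{\rho_2}(M,N)\le d_{\rho_2}(M,X)+d_{\rho_2}(X,N)+c+3\delta ,
\]
which is the asserted $(c(\rho_1)+3\delta(\rho_1,\rho_2))$-relaxed triangle inequality.

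\textbf{Infinite values.} The only point needing care is the extended-value arithmetic when some distances equal $\infty$. The inequality $|a-b|_\infty\le\delta$ with $\delta<\infty$ means $a\le b+\delta$ and $b\le a+\delta$ in $[0,\infty]$; this forces $a=\infty$ exactly when $b=\infty$. Each of the inequalities above therefore holds in $[0,\infty]$ with the convention $x+\infty=\infty$. Concretely, if either $d_{\rho_2}(M,X)$ or $d_{\rho_2}(X,N)$ is infinite, the right-hand side is $\infty$ and the claim is trivial. Otherwise every quantity in the chain is finite and the argument is ordinary real arithmetic.

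The relevant form of Theorem~\ref{thm:relaxed TI} is the one where the defect is exactly $c(\rho_1)$, obtained there by taking the infimum over $c>c(\rho_1)$. If one prefers to avoid that infimum, run the argument with any $c'>c(\rho_1)$ and let $c'\downarrow c(\rho_1)$ at the end.
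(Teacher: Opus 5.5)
Your proof is correct and follows essentially the same route as the paper: stability from $\rho_2$ to $\rho_1$, then the $c(\rho_1)$-relaxed triangle inequality for $d_{\rho_1}$ from Theorem~\ref{thm:relaxed TI}, then stability back, giving the defect $c(\rho_1)+3\delta(\rho_1,\rho_2)$. The one difference is cosmetic: the paper works with $r=\mathrm{dist}_{\infty}(d_{\rho_1},d_{\rho_2})$ plus an $\epsilon$-slack, while you use the pointwise bound $|d_{\rho_1}(A,B)-d_{\rho_2}(A,B)|_{\infty}\le\delta(\rho_1,\rho_2)$ directly. That is legitimate, since the proof of Theorem~\ref{thm:functional stability} establishes exactly this bound and the infimum defining $\mathrm{dist}_{\infty}$ is attained; your explicit handling of infinite values is a small bonus.
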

\begin{proof}
By Theorem~\ref{thm:relaxed TI}, $d_{\rho_1}$ satisfies the $c(\rho_1)$-relaxed triangle inequality.
If $\delta(\rho_1,\rho_2)=\infty$, there is nothing to prove; assume $\delta(\rho_1,\rho_2)<\infty$.

Set $r:=\mathrm{dist}_{\infty}(d_{\rho_1},d_{\rho_2})$. By Theorem~\ref{thm:functional stability}, $r\le \delta(\rho_1,\rho_2)$.
Let $\epsilon>0$. By the definition of $r$, for all $A,B\in\Fun(P,\mathcal{C})$ we have
\[
d_{\rho_2}(A,B)\le d_{\rho_1}(A,B)+r+\epsilon
\qand
d_{\rho_1}(A,B)\le d_{\rho_2}(A,B)+r+\epsilon.
\]
Hence for $M,X,N\in\Fun(P,\mathcal{C})$,
\[
\begin{aligned}
d_{\rho_2}(M,N)
&\le d_{\rho_1}(M,N)+r+\epsilon\\
&\le d_{\rho_1}(M,X)+d_{\rho_1}(X,N)+c(\rho_1)+r+\epsilon\\
&\le \bigl(d_{\rho_2}(M,X)+r+\epsilon\bigr)+\bigl(d_{\rho_2}(X,N)+r+\epsilon\bigr)+c(\rho_1)+r+\epsilon\\
&= d_{\rho_2}(M,X)+d_{\rho_2}(X,N)+c(\rho_1)+3(r+\epsilon).
\end{aligned}
\]
Since $\epsilon>0$ is arbitrary, letting $\epsilon\to0$ yields
\[
d_{\rho_2}(M,N)\le d_{\rho_2}(M,X)+d_{\rho_2}(X,N)+c(\rho_1)+3r
\le d_{\rho_2}(M,X)+d_{\rho_2}(X,N)+ c(\rho_1)+3\delta(\rho_1,\rho_2),
\]
as desired.
\end{proof}

We conclude this subsection with a few remarks and conventions that will be used throughout the sequel.

\begin{convention}\label{conv:phi-c}
We continue the notation from Convention~\ref{conv:phi-notation}.
Throughout, whenever $\rho=\rho_\phi$ is induced by a height function $\phi$, we will write the associated notions and constructions using $\phi$.
For instance, we say that $(P,\phi)$ satisfies CIP if $(P,\rho_{\phi})$ does. 
In addition, the condition {\rm (IV$_c$)} for $\rho_\phi$ is equivalent to the following:
\begin{itemize}
\item For any $a\leq b$ in $P$ and any $p\in[\phi(a),\phi(b)]$,
there exists $z\in P$ with $a\leq z\leq b$ such that $|\phi(z)-p|\leq c/2$.
\end{itemize}
Accordingly, $c(\phi):=c(\rho_\phi)$ is precisely the infimum of $c\geq 0$ for which $\phi$ satisfies the above condition.

Moreover, if $P$ is locally finite (i.e., every interval $[a,b]$ is finite), then
\[
c(\phi)=\sup_{a\lessdot b}\,(\phi(b)-\phi(a)),
\]
where the supremum is taken over all cover relations $a\lessdot b$ in $P$.
In particular, this holds for any finite poset.
\end{convention}

The additive defect in Theorem~\ref{thm:relaxed TI} is best possible in general, as explained in the following remark.

\begin{remark}\label{rem:sharpness-cphi}
(1) The additive defect $c(\rho)$ in Theorem~\ref{thm:relaxed TI} is sharp
in the sense that it cannot be improved uniformly under CIP.

Indeed, let $(P,\phi)$ and $M,X,N\in \Fun(P,\Vect_k)$ be as in Example~(2) of Subsection \ref{sec:examples}.
In this case $(P,\phi)$ satisfies CIP, and one has $c(\phi)=C$.
Moreover, \eqref{eq:defect C} shows that $d_{\phi}(M,X)=d_{\phi}(X,N)=0$ and $d_{\phi}(M,N)=C$. 
Thus any inequality of the form
\[
d_{\phi}(M,N)\leq d_{\phi}(M,X)+d_{\phi}(X,N)+c
\]
forces $c\geq C$. 

(2) Moreover, without the assumption of CIP, the distance $d_{\rho}$ may fail to satisfy a $c(\rho)$-relaxed triangle inequality.
Indeed, for the pair $(B,\phi)$ in Example~\ref{ex:S1}, which does not satisfy CIP, there exist $M,M_1,N\in \Fun(B,\Vect_k)$ such that
\[
d_{\phi}(M,N) > d_{\phi}(M,M_1) + d_{\phi}(M_1,N) + c(\phi).
\]
\end{remark}

We next discuss situations in which CIP can be verified, thereby making Theorem~\ref{thm:relaxed TI} applicable. 

\subsubsection{The case of diamond-free posets}
\label{sec:tree-posets}

A poset $P$ is called \emph{diamond-free} if it has no full subposet isomorphic to the four-element diamond poset, namely $\{a<b,\ a<c,\ b<d,\ c<d\}$.
Note that this is equivalent to requiring that every interval $[a,b]$ in $P$ be totally ordered. 
This class of posets naturally includes zigzag posets (with arbitrary orientations), as well as finite posets whose Hasse diagrams are either trees or of affine $\widetilde{A}$-type with no commutative paths.

\begin{proposition}\label{prop:tree-CIP}
If $P$ is diamond-free, then $(P,\rho)$ has CIP for any height-difference function $\rho$ on $P$.
In particular, the distance $d_{\rho}$ satisfies a $c(\rho)$-relaxed triangle inequality.
\end{proposition}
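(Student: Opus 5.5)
The plan is to verify CIP directly from the definition; the second assertion then follows at once from Theorem~\ref{thm:relaxed TI}. Fix $a,q\in P$ and $s,r\ge 0$, and assume $I_{s,r}(a,q)=a^{\downarrow_s}\cap q^{\uparrow_r}$ is non-empty. Every $x\in I_{s,r}(a,q)$ satisfies $q\le x\le a$, so $I_{s,r}(a,q)$ is a subset of the interval $[q,a]$. By the remark preceding the proposition, diamond-freeness means exactly that every interval of $P$ is totally ordered. Hence $[q,a]$ is a chain, and so is its subset $I_{s,r}(a,q)$.

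It then remains to observe that any non-empty chain is connected as a poset (viewed as a category). Any two elements $x,y$ are comparable, so there is a morphism between them and they lie in the same connected component. Therefore $I_{s,r}(a,q)$ is empty or connected for all choices of $a,q,s,r$, which is precisely CIP in the sense of Definition~\ref{def:CIP}. Nothing about $\rho$ is used beyond the definition of $a^{\downarrow_s}$ and $q^{\uparrow_r}$ as subsets of $a^{\downarrow}$ and $q^{\uparrow}$. This explains why the conclusion holds for every height-difference function $\rho$.

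The only step needing some care is the equivalence between diamond-freeness (no full subposet isomorphic to the diamond) and total order of intervals, since it is stated in the text rather than proved. If I had to justify it, I would argue as follows. Suppose $x,y\in[q,a]$ are incomparable. Then $x\ne q$, since $x=q$ would give $x\le y$; likewise $y\ne q$, and by the same reasoning with $a$, $x\ne a$ and $y\ne a$. So $\{q,x,y,a\}$ consists of four distinct elements with $q<x<a$ and $q<y<a$, and $x,y$ incomparable. These four elements therefore form a full subposet isomorphic to the diamond.

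The converse direction is not needed. With CIP established, I would invoke Theorem~\ref{thm:relaxed TI} to get the $c(\rho)$-relaxed triangle inequality. I do not expect any real obstacle: the argument reduces to the one-line observation that chains are connected.
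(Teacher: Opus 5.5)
Your proof is correct and follows the paper's argument: $I_{s,r}(a,q)$ lies in the interval $[q,a]$, which is a chain by diamond-freeness, so it is empty or connected, and Theorem~\ref{thm:relaxed TI} then gives the relaxed triangle inequality. Your argument that diamond-freeness forces every interval to be totally ordered is also correct, and it fills in a step the paper only asserts.
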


\begin{proof}
Fix $s,r\geq 0$ and $a,q\in P$. 
By definition, the set $I_{s,r}(a,q):=a^{\downarrow_s}\cap q^{\uparrow_r}$ is a full subposet of the interval $[q,a]$ whenever it is non-empty. 
By assumption, the interval $[q,a]$ is totally ordered. Hence its full subposet $I_{s,r}(a,q)$ is also totally ordered, and therefore connected. This verifies CIP for $(P,\rho)$.
\end{proof}

The following example gives a continuous zigzag poset $Z$ to which Proposition~\ref{prop:tree-CIP} applies.

\begin{example}
Let $Z$ be the set $\mathbb{R}$ equipped with the partial order $\leq_Z$ defined as follows.
If $x,y\in[n,n+1]$ for some $n\in\mathbb{Z}$, then
\[
x\leq_Z y
\quad\Longleftrightarrow\quad
\begin{cases}
x\leq y & \text{if }n \text{ is even},\\
x\geq y & \text{if }n \text{ is odd}.
\end{cases}
\]
Otherwise, $x$ and $y$ are incomparable.
Then every even integer $2k$ is a minimal element and every odd integer $2k+1$ is a maximal element of $Z$.
Clearly, $Z$ is diamond-free. 

Define a height function $\phi\colon Z\to\mathbb{R}$ as follow: 
if $x\in[n,n+1]$ for some $n\in\mathbb{Z}$, then
\[
\phi(x)=
\begin{cases}
x-n & \text{if }n \text{ is even},\\
n+1-x & \text{if }n \text{ is odd}.
\end{cases}
\]
In particular, $\phi(2k)=0$ and $\phi(2k+1)=1$ for all $k\in\mathbb{Z}$.
Since $c(\phi)=0$, Proposition~\ref{prop:tree-CIP} implies that $d_{\phi}$ is an extended pseudo-distance on $\Fun(Z,\mathcal{C})$.

Set $M:=k_{[-1,2]}$ and $N:=k_{[-1,0)}\oplus k_{(0,2]}$. 
Then, one checks that $d_{\phi}(M,N)=0$ with respect to the above $\phi$.

On the other hand, $M$ and $N$ are not interleaved with respect to any translation $\Lambda\colon Z\to Z$.
Indeed, every translation $\Lambda$ fixes each minimal element, so in particular $\Lambda(0)=0$.
Thus, the canonical map
$M(0)\to (\Lambda^*\Lambda^*M)(0)=M(0)$ is the identity, while
$(\Lambda^*N)(0)=N(\Lambda(0))=N(0)=0$ forces any composite
$M(0)\to (\Lambda^*N)(0)\to (\Lambda^*\Lambda^*M)(0)$ to be zero.
This shows that $M$ and $N$ are not $\Lambda$-interleaved. 

Consequently, for any superlinear family of translations $\Omega$, the modules $M$ and $N$ are not $\Omega_r$-interleaved for any $r\geq 0$ by the argument above.
Hence $d_{\Omega}(M,N)=\infty$.
\end{example}

We next give an example of a poset together with a choice of height function that does not satisfy CIP.

\begin{example}\label{ex:S1}
Let $S^1$ be the circle and fix two distinguished points $s,t\in S^1$.
Removing $\{s,t\}$ decomposes $S^1$ into two open arcs, whose closures we denote by $Q^1$ and $Q^2$.
Then $Q^1\cap Q^2=\{s,t\}$ and $Q^1\cup Q^2=S^1$.
Let $B$ be $S^1$ endowed with the partial order in which each $Q^i$ is a chain from $s$ to $t$, and there are no other comparabilities. This poset is, up to poset isomorphism, the one considered in \cite{Tada25} (called the bipath poset in their terminology).

Fix $G>4$. 
We identify each chain $Q^i$ with the interval $[0,G]$ via a poset isomorphism.
Accordingly, we write
\[
Q^1=\{c_z\}_{z\in[0,G]}\qand Q^2=\{d_z\}_{z\in[0,G]},
\]
with $c_0=d_0=s$ and $c_G=d_G=t$.
Define a height function 
$\phi\colon B\to\mathbb{R}$ by $\phi(c_z)=\phi(d_z)=z$ for all $z\in[0,G]$. 
It is straightforward to check that $(B,\phi)$ does not satisfy CIP, while $c(\phi)=0$.

Set $M:=k_B$, and define $M_1:=\L_1M$ and $N:=\L_1M_1=\L_1(\L_1M)$.
We claim that
\[
d_{\phi}(M,M_1)=1,\qquad d_{\phi}(M_1,N)=1,\qand d_{\phi}(M,N)=G/2.
\]
In particular, this shows that the $c(\phi)$-relaxed triangle inequality fails for the triple $(M,M_1,N)$.

A direct computation yields
\[
M_1 \cong k_{[c_1,t]\cup[d_1,t]}
\qand
N\cong k_{[c_2,t]}\oplus k_{[d_2,t]}.
\]
By Lemma~\ref{lem:id-LR_interleaving}, $M$ and $M_1$ (resp.\ $M_1$ and $N$) are $1$-height-interleaved with respect to $\phi$.
Since they are not $r$-height-interleaved for any $r<1$, it follows that
$d_{\phi}(M,M_1)=1$ and $d_{\phi}(M_1,N)=1$.

To compute $d_{\phi}(M,N)$, we record that
\[
\L_rM \cong 
\begin{cases}
k_{[c_r,t]\cup[d_r,t]} & 0\leq r \leq G,\\
0 & \textrm{else}.
\end{cases}
\]
In particular, $\Hom(\L_rM,N)=0$ for all $0\leq r<G$.
On the other hand, one has $e_{r,M}\neq 0$ for $r\leq G/2$, whereas $e_{r,M}=0$ for $r>G/2$.
Hence $M$ and $N$ are not $r$-height-interleaved for $r\leq G/2$, while they are $r$-height-interleaved for every $r>G/2$.
Therefore $d_{\phi}(M,N)=G/2$.
\end{example}

\subsubsection{Multiparameter case}

We now turn to the multiparameter setting $P=\mathbb{R}^d$.
Let $\rho_{\mathrm{diag}}$ be the height-difference function on $\mathbb{R}^d$ defined by \eqref{eq:rho-diag}. 
As shown in Proposition~\ref{prop:recoverRd}, the height-interleaving distance $d_{\rho_{\mathrm{diag}}}$ coincides with the classical interleaving distance $d_I$ on $\Fun(\mathbb{R}^d,\mathcal{C})$.
For completeness, we record that this is consistent with Theorem~\ref{thm:relaxed TI}.

\begin{proposition}\label{prop:Rd-CIP-IV0}
The pair $(\mathbb{R}^d,\rho_{\mathrm{diag}})$ satisfies CIP and {\rm (IV$_0$)}.
In particular, $c(\rho_{\mathrm{diag}})=0$ and $d_{\rho_{\mathrm{diag}}}$ is an extended pseudo-distance.
\end{proposition}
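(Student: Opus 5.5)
We verify the two conditions directly from the explicit description of the sets involved, and then invoke Theorem~\ref{thm:relaxed TI}. By \eqref{eq:vsRshiftRd} in the proof of Proposition~\ref{prop:recoverRd}, we have $a^{\downarrow_s}=(a-\vec{s})^{\downarrow}$ and $q^{\uparrow_r}=(q+\vec{r})^{\uparrow}$ for all $a,q\in\mathbb{R}^d$ and $s,r\geq 0$. Hence
\[
I_{s,r}(a,q)=a^{\downarrow_s}\cap q^{\uparrow_r}=\{x\in\mathbb{R}^d \mid q+\vec{r}\leq x\leq a-\vec{s}\},
\]
which is a closed box $[q+\vec{r},\,a-\vec{s}]$ in the product order. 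This set is either empty or has a minimum element $q+\vec{r}$ (and a maximum $a-\vec{s}$). A poset with a minimum is connected, since every element is comparable to the minimum. This proves CIP.

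For (IV$_0$), we take $a\leq b$ in $\mathbb{R}^d$ and $t\in[0,\rho_{\mathrm{diag}}(a,b)]$, where $\rho_{\mathrm{diag}}(a,b)=m:=\min_i(b_i-a_i)<\infty$. Set $z:=a+\vec{t}$. Since $t\leq m\leq b_i-a_i$ for every $i$, we get $a\leq z\leq b$. We compute
\[
\rho_{\mathrm{diag}}(a,z)=\min_i t=t
\qand
\rho_{\mathrm{diag}}(z,b)=\min_i(b_i-a_i-t)=m-t.
\]
So both required equalities hold exactly, and (IV$_0$) is satisfied. Note that $\rho_{\mathrm{diag}}$ never takes the value $\infty$, so no separate infinite case is needed.

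Consequently $c(\rho_{\mathrm{diag}})=0$ by \eqref{eq:c-rho}, since it is an infimum of nonnegative numbers containing $0$. Theorem~\ref{thm:relaxed TI} then yields that $d_{\rho_{\mathrm{diag}}}$ satisfies the triangle inequality. Together with symmetry and vanishing on the diagonal, this makes it an extended pseudo-distance, consistent with Proposition~\ref{prop:recoverRd} and the known properties of $d_I$.

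There is no real obstacle here. The only point requiring care is the identification of $I_{s,r}(a,q)$ with an order interval, which rests on the computation \eqref{eq:vsRshiftRd}.
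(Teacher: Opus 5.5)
Your proof is correct and follows essentially the same route as the paper. You identify $I_{s,r}(a,q)$ with the box $\prod_i[q_i+r,\,a_i-s]$, which is empty or has a minimum, and you verify (IV$_0$) exactly with $z=a+\vec{t}$ before concluding via Theorem~\ref{thm:relaxed TI}.
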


\begin{proof}
Let $a,q\in\mathbb{R}^d$ and $s,r\geq 0$.
We have
\[
a^{\downarrow_s}= (a-\vec{s})^{\downarrow} = \prod_{i=1}^d(-\infty,a_i-s]
\qand
q^{\uparrow_r}=(q+\vec{r})^{\uparrow} = \prod_{i=1}^d[q_i+r,\infty),
\]
hence
\[
I_{s,r}(a,q)=a^{\downarrow_s}\cap q^{\uparrow_r}
=\prod_{i=1}^d [q_i+r,\ a_i-s],
\]
which is empty or has a minimum element. Thus $(\mathbb{R}^d,\rho_{\mathrm{diag}})$ satisfies CIP.

Next, let $a\leq b$ and set $m:=\rho_{\mathrm{diag}}(a,b)=\min_i(b_i-a_i)$.
For any $t\in[0,m]$, put $z:=a+(t,\ldots,t)$.
Then $a\leq z\leq b$, and
\[
\rho_{\mathrm{diag}}(a,z)=t
\qand
\rho_{\mathrm{diag}}(z,b)=m-t=\rho_{\mathrm{diag}}(a,b)-t.
\]
This verifies {\rm (IV$_0$)}.
\end{proof}

Proposition~\ref{prop:Rd-CIP-IV0} also allows us to phrase comparisons with $d_I$ in terms of height-difference functions, which leads to the following terminology.

\begin{definition}\label{def:diagonal-dominating}
Let $P=\mathbb{R}^d$ with the product order, and let $\rho$ be a height-difference function on $P$.
We say that $\rho$ \emph{dominates the diagonal shifts} if for all $a\in P$ and $r\geq 0$,
\[
\rho(a,a+\vec r)\geq r
\qand
\rho(a-\vec r,a)\geq r. 
\]
\end{definition}

\begin{proposition}\label{prop:dI-le-drho}
Assume that $\rho$ dominates the diagonal shifts.
Then for any $M,N\in\Fun(\mathbb{R}^d,\mathcal C)$,
\[
d_I(M,N)\leq d_{\rho}(M,N).
\]
\end{proposition}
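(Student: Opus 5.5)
The plan is to reduce to the monotonicity statement Proposition~\ref{prop:monotonicity-rho}, with $\rho_1=\rho_{\mathrm{diag}}$ and $\rho_2=\rho$, and then invoke Proposition~\ref{prop:recoverRd}(3). For this it suffices to show $\rho_{\mathrm{diag}}\leq \rho$ pointwise on comparable pairs. The conclusion would then read $d_I(M,N)=d_{\rho_{\mathrm{diag}}}(M,N)\leq d_\rho(M,N)$.

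Should the pointwise inequality be awkward to prove, I would instead bypass it and compare the index sets directly, since that is all the proof of Proposition~\ref{prop:monotonicity-rho} actually uses. Concretely, I would show
\[
a^{\downarrow_r^{\rho_{\mathrm{diag}}}}=(a-\vec r)^{\downarrow}\subseteq a^{\downarrow_r^{\rho}}
\qand
a^{\uparrow_r^{\rho_{\mathrm{diag}}}}=(a+\vec r)^{\uparrow}\subseteq a^{\uparrow_r^{\rho}},
\]
using the description \eqref{eq:vsRshiftRd}. Take $x\leq a-\vec r$. Superadditivity \eqref{eq:superadditive} along the chain $x\leq a-\vec r\leq a$ gives $\rho(x,a)\geq \rho(x,a-\vec r)+\rho(a-\vec r,a)\geq 0+r$, where the last step uses the domination hypothesis. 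Hence $x\in a^{\downarrow_r^\rho}$. The upset inclusion is symmetric, using the chain $a\leq a+\vec r\leq y$ and $\rho(a,a+\vec r)\geq r$.

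These inclusions induce natural transformations $\L_r^{\rho_{\mathrm{diag}}}\Rightarrow\L_r^\rho$ and $\R_r^\rho\Rightarrow\R_r^{\rho_{\mathrm{diag}}}$, exactly as in the proof of Proposition~\ref{prop:monotonicity-rho}. Postcomposing an $r$-height-interleaving for $\rho$ with the second transformation gives one for $\rho_{\mathrm{diag}}$. This is exactly the argument already given there, which only needs the set inclusions and not the pointwise inequality of functions.

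The only point needing care, and the step I expect to be the main obstacle, is checking that this transfer respects the identities $e_{r,M}=q\circ p^\sharp$. This requires that the transformation $\R^\rho_r\Rightarrow\R^{\rho_{\mathrm{diag}}}_r$ is the mate of $\L^{\rho_{\mathrm{diag}}}_r\Rightarrow \L^\rho_r$, and that it is compatible with $\eta^{\R}_r$ and $\eta^{\L}_r$. Both transformations are induced by inclusions of index sets inside $a^{\downarrow}$ and $a^{\uparrow}$, so this compatibility is immediate from uniqueness in the universal properties. Taking the infimum over $r$ and applying Proposition~\ref{prop:recoverRd}(3) finishes the argument.
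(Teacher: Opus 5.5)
Your proposal is correct and follows the paper's route. The paper proves the pointwise inequality $\rho_{\mathrm{diag}}\le\rho$ with the same superadditivity trick (for $a\le b$ and $r=\rho_{\mathrm{diag}}(a,b)$ one has $a+\vec r\le b$, so $\rho(a,b)\ge\rho(a,a+\vec r)\ge r$), and then applies Proposition~\ref{prop:monotonicity-rho} and Proposition~\ref{prop:recoverRd}(3). Your fallback runs the same estimate directly on the index sets, using both halves of the domination hypothesis where the paper needs only $\rho(a,a+\vec r)\ge r$. Your check of the mate compatibility correctly fills in what the proof of the monotonicity proposition leaves implicit.
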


\begin{proof}
We first show that $\rho_{\mathrm{diag}}\leq \rho$.
Fix $a\leq b$ and set $r:=\rho_{\mathrm{diag}}(a,b)=\min_i(b_i-a_i)$.
Then $a+\vec r\leq b$.
Since $\rho$ dominates the diagonal shifts, we have $\rho(a,a+\vec r)\geq r$.
By \eqref{eq:superadditive}, we obtain
\[
\rho(a,b)\geq \rho(a,a+\vec r)+\rho(a+\vec r,b)\geq r=\rho_{\mathrm{diag}}(a,b).
\]
Thus $\rho_{\mathrm{diag}}\leq \rho$.

By Proposition~\ref{prop:monotonicity-rho}, it follows that $d_{\rho_{\mathrm{diag}}}\leq d_{\rho}$.
Since $d_{\rho_{\mathrm{diag}}}=d_I$ by Proposition~\ref{prop:recoverRd}, the claim follows.
\end{proof}

\subsection{Inherited relaxed triangle inequalities via Galois insertions}
\label{sec:RTI via GI}
Let $P,P'$ be posets. A pair of order-preserving maps
$\iota\colon P\to P'$ and $\pi\colon P'\to P$
is called a \emph{Galois insertion} if $\iota\dashv\pi$ and $\iota$ is an order-embedding. 
We identify $P$ with its image $\iota(P)$ via $\iota$, and regard it as a full subposet of $P'$.

\begin{theorem}\label{thm:RTI via GI}
Let $\rho'$ be a height-difference function on a poset $P'$.
Let $(\iota,\pi)$ be a Galois insertion $P\rightleftarrows P'$ and  $\rho:=\iota^{*}\rho'$ the restriction of $\rho$ to $P$.
Then the following statements hold. 
\begin{enumerate}[\rm (1)]
\item For any $M,N\in \Fun(P,\mathcal{C})$, we have
\begin{equation}\label{eq:completion ineq rho}
d_{\rho}(M,N) \leq 
d_{\rho'}(\pi^*M, \pi^*N) \leq d_{\rho}(M,N) + \delta(\rho',\pi^*\rho).
\end{equation}
%\item If $d_{\rho'}$ satisfies a $c$-relaxed triangle inequality, then $d_{\rho}$ satisfies a $(c+2\delta(\rho',\pi^*\rho))$-relaxed triangle inequality.

\item If $(P',\rho')$ has CIP, then $d_{\rho}$ satisfies a
$\bigl(c(\rho')+2\delta(\rho',\pi^*\rho)\bigr)$-relaxed triangle inequality.
\end{enumerate}
\end{theorem}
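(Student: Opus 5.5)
The plan is to deduce both parts formally from pullback stability (Proposition~\ref{prop:pullback stability}), functional stability (Theorem~\ref{thm:functional stability}) and Theorem~\ref{thm:relaxed TI}. I will first record two elementary facts about the Galois insertion. First, from $\iota\dashv\pi$ we get $p\le \pi\iota(p)$ for all $p\in P$ and $\iota\pi(x')\le x'$ for all $x'\in P'$. Since $\iota$ is an order-embedding, $\iota(p)\le\iota\pi\iota(p)\le\iota(p)$ forces $\pi\iota=\id_P$. Consequently $\iota^*\pi^*=(\pi\iota)^*=\id$ on $\Fun(P,\mathcal C)$. Second, $\pi^*\rho$, given by $(x',y')\mapsto\rho(\pi x',\pi y')$, is a height-difference function on $P'$, because $\pi$ is order-preserving. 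Finally, $\rho=\iota^*\rho'$ holds by definition.

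For (1), the left inequality comes from pullback stability along $\iota$ applied to $\pi^*M,\pi^*N\in\Fun(P',\mathcal C)$ with $\rho'$. This gives
\[
d_{\iota^*\rho'}(\iota^*\pi^*M,\iota^*\pi^*N)\le d_{\rho'}(\pi^*M,\pi^*N),
\]
and the left-hand side equals $d_\rho(M,N)$ by the facts above. For the right inequality, I first apply pullback stability along $\pi$ with $\rho$ to obtain $d_{\pi^*\rho}(\pi^*M,\pi^*N)\le d_\rho(M,N)$. Then functional stability on $P'$ gives
\[
d_{\rho'}(\pi^*M,\pi^*N)\le d_{\pi^*\rho}(\pi^*M,\pi^*N)+\delta(\rho',\pi^*\rho).
\]
This holds because $\mathrm{dist}_\infty(d_{\rho'},d_{\pi^*\rho})\le\delta(\rho',\pi^*\rho)$, which unpacks pointwise as $|d_{\rho'}-d_{\pi^*\rho}|_\infty\le\delta$, with an $\epsilon$ argument to handle the infimum. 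The case $\delta=\infty$ is trivial. Chaining the two inequalities gives (1).

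For (2), take $M,X,N\in\Fun(P,\mathcal C)$ and chain the estimates. By the left inequality in (1) and Theorem~\ref{thm:relaxed TI} for $(P',\rho')$,
\[
d_\rho(M,N)\le d_{\rho'}(\pi^*M,\pi^*N)\le d_{\rho'}(\pi^*M,\pi^*X)+d_{\rho'}(\pi^*X,\pi^*N)+c(\rho').
\]
Applying the right inequality of (1) to each of the two terms then yields
\[
d_\rho(M,N)\le d_\rho(M,X)+d_\rho(X,N)+c(\rho')+2\delta(\rho',\pi^*\rho).
\]
No part of this is genuinely hard. The only points needing care are the identity $\pi\iota=\id_P$, which uses the embedding hypothesis and is what makes $\iota^*\pi^*M=M$, and the bookkeeping with infinite values in the $|\cdot|_\infty$ conventions.
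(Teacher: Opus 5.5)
Your proposal is correct and follows essentially the same route as the paper: pullback stability along $\iota$ gives the left inequality, pullback stability along $\pi$ combined with functional stability on $P'$ gives the right one, and (2) follows by chaining these with Theorem~\ref{thm:relaxed TI} for $(P',\rho')$. The differences are cosmetic: the paper records the full equality $d_{\pi^*\rho}(\pi^*M,\pi^*N)=d_\rho(M,N)$, of which you use only the needed inequality, and it takes $\pi\iota=\id_P$ for granted, whereas you derive it from the order-embedding hypothesis.
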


\begin{proof}
Since $\iota\dashv\pi$, for $a\in P$ and $x\in P'$ we have $\iota(a)\leq x$ if and only if $a\leq \pi(x)$.
In particular, $ \iota(\pi(x))\leq x$ for all $x\in P'$.

(1) Using $\pi\iota=\id_P$, we have $\iota^*\pi^*\simeq \id_{\Fun(P,\mathcal{C})}$, hence by pullback stability,
\[
d_{\rho}(M,N)=d_{\iota^*\rho'}(\iota^*\pi^*M,\iota^*\pi^*N)\leq d_{\rho'}(\pi^*M,\pi^*N),
\]
which gives the left-hand inequality in \eqref{eq:completion ineq rho}.

On the other hand, since $\rho=(\pi\iota)^*\rho = \iota^*(\pi^*\rho)$ by $\pi\iota=\id_P$, 
applying pullback stability twice gives
\[
d_{\rho}(M,N)=d_{\iota^*(\pi^*\rho)}(\iota^*\pi^*M,\iota^*\pi^*N)\leq d_{\pi^*\rho}(\pi^*M,\pi^*N)
\leq d_{\rho}(M,N), 
\]
hence
\[
d_{\pi^*\rho}(\pi^*M,\pi^*N)=d_{\rho}(M,N).
\]

Now, by functional stability (Theorem~\ref{thm:functional stability}) applied on $P'$,
\[
|d_{\rho'}(\pi^*M,\pi^*N)-d_{\pi^*\rho}(\pi^*M,\pi^*N)|_{\infty}\leq \delta(\rho',\pi^*\rho).
\]
In particular,
\[
d_{\rho'}(\pi^*M,\pi^*N)\leq d_{\pi^*\rho}(\pi^*M,\pi^*N)+\delta(\rho',\pi^*\rho)
= d_{\rho}(M,N)+\delta(\rho',\pi^*\rho),
\]
which is the right-hand inequality in \eqref{eq:completion ineq rho}.

(2) Assume $(P',\rho')$ has CIP. 
By Theorem~\ref{thm:relaxed TI}, $d_{\rho'}$ satisfies a $c(\rho')$-relaxed triangle inequality.
For $M,X,N\in \Fun(P,\mathcal{C})$, by (1) we have
\[
d_{\rho}(M,N)\leq d_{\rho'}(\pi^*M,\pi^*N)
\leq d_{\rho'}(\pi^*M,\pi^*X)+d_{\rho'}(\pi^*X,\pi^*N)+c(\rho').
\]
Applying (1) again to each term, we obtain
\[
d_{\rho'}(\pi^*M,\pi^*X)\leq d_{\rho}(M,X)+\delta(\rho',\pi^*\rho)
\qand
d_{\rho'}(\pi^*X,\pi^*N)\leq d_{\rho}(X,N)+\delta(\rho',\pi^*\rho).
\]
Combining these inequalities yields
\[
d_{\rho}(M,N)\leq d_{\rho}(M,X)+
d_{\rho}(X,N)+\big(c(\rho')+2\delta(\rho',\pi^*\rho)\big),
\]
as desired.
\end{proof}

\section{Height-interleavings for persistence modules over $\Vect_k$}
\label{sec:PM}

In this section, we restrict our attention to the case $\mathcal{C}=\Vect_k$ and focus on the category of persistence modules.
Accordingly, we work in the abelian category $\Rep_k(P):=\Fun(P,\Vect_k)$ and regard its objects as $P$-persistence modules.
Recall that kernels, cokernels and images in $\Rep_k(P)$ are computed pointwise.
In particular, a morphism in $\Rep_k(P)$ is mono/epi if and only if it is so at each $a\in P$.

Let $F,G\colon \Rep_k(P) \to \Rep_k(P)$ be endofunctors. 
We say that $F$ is a \emph{subfunctor} (resp., \emph{quotient functor}) of $G$ if there is a natural transformation $F\Rightarrow G$ such that its component $F(M) \to G(M)$ is a monomorphism (resp., an epimorphism) for every $M$.
In addition, we say that $F$ is a \emph{subquotient} of $G$ if 
there exists an endofunctor $H\colon \Rep_k(P)\to \Rep_k(P)$ such that  
$H$ is a subfunctor of $G$ and $F$ is a quotient functor of $H$. 
In this case, $F(M)$ is a subquotient of $G(M)$ functorially in $M$. 
Finally, for a given natural transformation $\alpha\colon F\Rightarrow G$, we write $\im(\alpha)$ (resp. $\ker(\alpha)$) for the endofunctor obtained by taking the image (resp. kernel) pointwise.

\subsection{Erosion functors}

Fix a height-difference function $\rho$ on $P$ and $r\geq 0$.
We have defined endofunctors $\L_r$ and $\R_r$ on $\Rep_k(P)$ together with natural transformations
$\eta_{r}^{\L}\colon \L_r\Rightarrow \id$ and $\eta_{r}^{\R}\colon \id \Rightarrow \R_{r}$.
Notice that both $\L_r$ and $\R_r$ are additive.

\begin{proposition}
$\L_r$ is right exact and $\R_r$ is left exact.
\end{proposition}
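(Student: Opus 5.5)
The quickest route is the adjunction $\L_r\dashv\R_r$ of Proposition~\ref{prop:adjoint LrRr}. A left adjoint preserves all colimits and a right adjoint preserves all limits, so $\L_r$ preserves cokernels and $\R_r$ preserves kernels. Both functors are additive, as noted just before the statement, and $\Rep_k(P)$ is abelian with finite biproducts. Hence $\L_r$ preserves exactness of sequences $M'\to M\to M''\to 0$, since such a sequence just says that $M\to M''$ is a cokernel of $M'\to M$. Dually, $\R_r$ preserves exactness of $0\to M'\to M\to M''$. This gives right exactness of $\L_r$ and left exactness of $\R_r$.

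As a consistency check, and as a fallback independent of the adjunction, I would also argue pointwise, since exactness in $\Rep_k(P)$ is tested at each $a\in P$. We have $(\L_rM)(a)=\colim M|_{a^{\downarrow_r}}$. The functor $\Fun(a^{\downarrow_r},\Vect_k)\to\Vect_k$ sending $D\mapsto\colim D$ is left adjoint to the constant diagram functor $\Delta$, so it is right exact. Restriction $M\mapsto M|_{a^{\downarrow_r}}$ is exact, because exactness of functors is pointwise. The composite is therefore right exact at every $a$, which means $\L_r$ is right exact. The same argument with $\lim\dashv$ replaced by $\Delta\dashv\lim$ and $a^{\uparrow_r}$ gives left exactness of $\R_r$.

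The only point needing care is that the paper defines $\L_r$ on morphisms through universal properties. I would confirm that the pointwise colimit maps used above are exactly the components $(\L_rf)(a)$. This holds by construction, since both are induced by the same cocone. The case of an empty index set, where the value is $0$ and the statement is trivial, requires no separate treatment. I expect no real obstacle here.
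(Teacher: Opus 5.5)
Your proposal is correct and matches the paper's proof, which likewise notes that in the abelian category $\Rep_k(P)$ a left adjoint is right exact and a right adjoint is left exact, and then applies the adjunction $\L_r\dashv\R_r$ of Proposition~\ref{prop:adjoint LrRr}. Your pointwise fallback via $\colim\dashv\Delta\dashv\lim$ is also valid, though the paper does not need it.
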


\begin{proof}
Since $\Rep_k(P)$ is abelian, any left adjoint functor is right exact and any right adjoint functor is left exact.
Then, the assertion follows from Proposition~\ref{prop:adjoint LrRr}.
\end{proof}

We now introduce a third endofunctor, called the $r$-erosion functor, which sits between $\L_r$ and $\R_r$.
See Remark~\ref{rem:erosion} for the motivation and background of this terminology.

\begin{definition}
For $r\geq 0$, we define an endofunctor
\[
\E_r:=\im\bigl(e_r\colon \L_r\Rightarrow \R_r\bigr)\colon \Rep_k(P)\to \Rep_k(P)
\]
and call it the \emph{$r$-erosion functor} (with respect to $\rho$).
\end{definition}

Explicitly, for $M\in \Rep_k(P)$, we set
\[
\E_rM := \im \bigl(e_{r,M} \colon \L_rM \to \R_rM\bigr) \subseteq \R_rM.
\]
For a morphism $f\colon M\to N$, the naturality of $e_r$ induces the commutative diagram
\begin{equation*}
    \xymatrix@C45pt@R34pt{
    \L_rM \ar[r]^{e_{r,M}} \ar[d]_{\L_r(f)} \ar@{}[rd]|{\circlearrowleft}& \R_rM \ar[d]^{\R_r(f)} \\
    \L_rN \ar[r]^{e_{r,N}} & \R_rN
    }
\end{equation*}
and the restriction of $\R_r(f)$ to $\E_rM$ defines a morphism $\E_r(f)\colon \E_rM\to \E_rN$.

Moreover, in the same spirit as $\L_r$ and $\R_r$, which admit pointwise descriptions in terms of left and right Kan extensions,
the erosion functor $\E_r$ relates to the \emph{intermediate extension} \cite{BBD,Kuhn94} (see also \cite{AET25} for its application to persistence modules).
More precisely, for each $a\in P$, consider the full subposet
$U_a:= a^{\downarrow_r} \cup a^{\uparrow_r}\subseteq P$ with the canonical inclusion $u_a\colon U_a\to P$.
We recall from \eqref{eq:LR as Ua} that, for $M\in \Rep_k(P)$,
the objects $\L_rM(a)$ and $\R_rM(a)$ can be written as pointwise Kan extensions along $u_a$, respectively.
With these identifications, $\E_rM(a)$ is given as the image of the canonical map
\[
(\Lan_{u_a}(M|_{U_a}))(a)\longrightarrow (\Ran_{u_a}(M|_{U_a}))(a)
\]
induced by the adjoint triple $\Lan_{u_a}\dashv u_a^{*}\dashv \Ran_{u_a}$.
This realizes $\E_r$ as a pointwise intermediate extension between $\L_r$ and $\R_r$.

\begin{proposition}\label{prop:erosion functor}
For $r,s\geq 0$, we have the following.
\begin{enumerate}[\rm (1)]
\item $\E_r$ preserves monomorphisms and epimorphisms.
\item For $s\geq r$, $\E_s$ is a subquotient of $\E_r$.
\item For all $s,r\geq0$, $\E_s\E_r$ is a subquotient of $\E_{s+r}$.
\end{enumerate}
\end{proposition}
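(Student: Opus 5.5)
The plan is to handle the three assertions separately. Throughout we use the exactness properties just established ($\L_r$ right exact, $\R_r$ left exact) and the fact that kernels, images and epi/monos in $\Rep_k(P)$ are computed pointwise. Each subquotient statement will come from one elementary observation: if a natural transformation factors as $\gamma=\beta\circ\alpha$, then $\im(\gamma)$ is a quotient functor of the subfunctor $\im(\alpha)\subseteq$ (target of $\alpha$). Indeed, $\beta$ restricted to $\im(\alpha)$ surjects onto $\im(\gamma)$.

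For (1), let $f\colon M\to N$ be a monomorphism. Left exactness gives that $\R_r(f)$ is a monomorphism. Since $\E_r(f)$ is the restriction of $\R_r(f)$ to $\E_rM\subseteq\R_rM$, it is again a monomorphism. If $f$ is an epimorphism, right exactness gives that $\L_r(f)$ is an epimorphism. By naturality of $e_r$, the composite $\L_rM\to\E_rM\xrightarrow{\E_r(f)}\E_rN$ equals $\L_rM\xrightarrow{\L_r(f)}\L_rN\twoheadrightarrow\E_rN$. The latter is an epimorphism, so $\E_r(f)$ is one too.

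For (2), with $s\geq r$, the compatibilities $\eta^{\L}_s=\eta^{\L}_r\circ\eta^{\L}_{s,r}$ and $\eta^{\R}_s=\eta^{\R}_{r,s}\circ\eta^{\R}_r$ give
\[
e_s=\eta^{\R}_{r,s}\circ e_r\circ\eta^{\L}_{s,r}.
\]
Put $H:=\im(e_r\circ\eta^{\L}_{s,r})$. This is a subfunctor of $\E_r=\im(e_r)$, since $\im(e_r\circ\eta^{\L}_{s,r})\subseteq\im(e_r)$. Then $\E_s=\im(e_s)$ is the image of $H$ under $\eta^{\R}_{r,s}$, hence a quotient functor of $H$.

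For (3), the idea is to realize $\E_s\E_rM$ as the image of a map $\L_s\L_rM\to\R_s\R_rM$ and then factor that map through $e_{s+r}$. Consider the canonical epimorphism $\L_rM\twoheadrightarrow\E_rM$; applying the right exact $\L_s$ gives an epimorphism $\L_s\L_rM\twoheadrightarrow\L_s\E_rM$. Likewise, applying the left exact $\R_s$ to the monomorphism $\E_rM\hookrightarrow\R_rM$ gives a monomorphism $\R_s\E_rM\hookrightarrow\R_s\R_rM$. Composing,
\[
\Phi_M\colon \L_s\L_rM\twoheadrightarrow\L_s\E_rM\xrightarrow{e_{s,\E_rM}}\R_s\E_rM\hookrightarrow\R_s\R_rM,
\]
and $\im(\Phi_M)\cong\E_s\E_rM$ naturally in $M$.

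The key step is then the identity
\[
\Phi=\mu^{\R}_{s,r}\circ e_{s+r}\circ\mu^{\L}_{s,r}\colon \L_s\L_r\Rightarrow\R_s\R_r,
\]
where $\mu^{\R}_{s,r}\colon\R_{s+r}\Rightarrow\R_s\R_r$ is the dual comparison map. I would prove this componentwise via the universal properties. Both sides at $a\in P$ are determined by their composites
\[
M(y)\to\L_r M(x)\to\L_s\L_rM(a)\to\R_s\R_rM(a)\to\R_rM(x')\to M(y'),
\]
taken over all $y\in x^{\downarrow_r}$, $x\in a^{\downarrow_s}$, $x'\in a^{\uparrow_s}$, $y'\in x'^{\uparrow_r}$. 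On both sides every such composite reduces to the structure map $M(y\leq y')$.

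Granting the identity, let $H:=\im(e_{s+r}\circ\mu^{\L}_{s,r})$, a subfunctor of $\E_{s+r}=\im(e_{s+r})$. Then $\E_s\E_r\cong\im(\Phi)$ is the image of $H$ under $\mu^{\R}_{s,r}$, hence a quotient functor of $H$, so $\E_s\E_r$ is a subquotient of $\E_{s+r}$.

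I expect the main obstacle to be this identification of $\Phi$. It requires unwinding $e_{s,\E_rM}$ through the (co)limit cocones of the intermediate functor $\E_rM$ and checking that everything is natural in $M$, so that the subquotient structure is functorial.
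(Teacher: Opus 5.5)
Your proof is correct and follows essentially the same route as the paper. For (1) you use that $e_r$ goes from the epi-preserving $\L_r$ to the mono-preserving $\R_r$; for (2) you use the factorization $e_s=\eta^{\R}_{r,s}\circ e_r\circ\eta^{\L}_{s,r}$; and for (3) you identify $\E_s\E_rM$ with the image of $\L_s\L_rM\to\R_s\R_rM$ and factor that map as $\mu^{\R}_{s,r}\circ e_{s+r}\circ\mu^{\L}_{s,r}$, exactly as the paper does. The only addition is that you sketch the (co)limit-wise check that both composites reduce to $M(y\leq y')$, which the paper leaves implicit in its commutative diagram.
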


\begin{proof}
(1) We recall a fundamental fact that, for a natural transformation $\alpha\colon F\Rightarrow G$ between endofunctors on an abelian category, the image of $\alpha$ preserves epimorphisms whenever $F$ does, and preserves monomorphisms whenever $G$ does.

By Proposition~\ref{prop:adjoint LrRr}, $\L_r$ is right exact and $\R_r$ is left exact.
In particular, $\L_r$ preserves epimorphisms and $\R_r$ preserves monomorphisms.
Thus, the claim follows from the above fact applied to the natural transformation $e_r\colon \L_r\Rightarrow \R_r$.

(2) Let $s\ge r$.
Recall the canonical factorization
\[
\L_sM \xrightarrow{\eta^{\L}_{s,r,M}} \L_rM
\xrightarrow{\eta^{\L}_{r,M}} M
\xrightarrow{\eta^{\R}_{r,M}} \R_rM
\xrightarrow{\eta^{\R}_{r,s,M}} \R_sM .
\]
Let
\[
\gamma_{s,r}\colon \L_s \Rightarrow \R_r
\qand
H_{s,r}:=\im(\gamma_{s,r}).
\]
Then, for each $M$, the above factorization shows that $H_{s,r}M$ is a subobject of $\E_rM$,
and that $\E_sM$ is a quotient of $H_{s,r}M$.
Since these constructions are natural in $M$, we conclude that $\E_s$ is a subquotient of $\E_r$.

(3) Let $s,r\geq 0$ and $M\in \Rep_k(P)$.
We write
\begin{equation}\label{eq:Er factorization}
\L_rM \overset{\pi}{\twoheadrightarrow} \E_rM \overset{\iota}{\hookrightarrow} \R_rM
\end{equation}
for the canonical surjection and inclusion.
Applying $\L_s$ and $\R_s$ gives the following commutative diagram:
\begin{equation*}
\xymatrix@C38pt@R30pt{
\L_s(\L_rM) \ar[r]^-{\mu_{s,r,M}^{\L}} \ar[d]^{\L_s(\pi)} &
\L_{s+r}M \ar[r]^-{\eta_{s+r,M}^{\L}} &
M \ar@{}[d]|{\circlearrowleft} \ar[r]^-{\eta_{s+r,M}^{\R}} &
\R_{s+r}M \ar[r]^-{\mu_{s,r,M}^{\R}} &
\R_s(\R_rM) \\
\L_s(\E_rM) \ar[rr]^-{\eta_{s,\E_rM}^{\L}} &&
\E_rM \ar[rr]^-{\eta_{s,\E_rM}^{\R}} &&
\R_s(\E_rM) \ar[u]_{\R_s(\iota)}.
}
\end{equation*}
%Here we use the natural transformation $\mu^{\R}_{s,r}\colon \R_{s+r}\Rightarrow \R_s\R_r$, constructed in the same way as $\mu^{\L}_{s,r}$ using \eqref{eq:superadditive}.

Since $\L_s(\pi)$ is an epimorphism and $\R_s(\iota)$ is a monomorphism, we obtain an isomorphism
\[
\E_s(\E_rM) := \im \bigl(\L_s(\E_rM) \to \R_s(\E_rM)\bigr)
\cong
\im\bigl(\L_s(\L_rM)\to \R_s(\R_rM)\bigr).
\]
By the same argument as in (2), we conclude that $\E_s(\E_rM)$ is a subquotient of $\E_{s+r}M$.
Since the construction is natural in $M$, the endofunctor $\E_s\E_r$ is a subquotient of $\E_{s+r}$.
\end{proof}

\begin{remark}\label{rem:erosion}
Our notion of $r$-erosion is analogous to the one defined in \cite{Bjerkevik25} for a superlinear family of translations. 
Let $\Omega=(\Omega_r)_{r\geq 0}$ be a strong superlinear family of translations on $P$. 
For a $P$-persistence module $M$, the $r$-erosion of $M$ with respect to $\Omega$ is defined as the image of the canonical map $(\Omega_{-r}^*M \to M \to \Omega_{r}^*M)$.

In particular, when $P=\mathbb{R}^d$ and $\rho=\rho_{\mathrm{diag}}$, 
our $r$-erosion agrees with the $r$-erosion associated with the $r$-shift functor on $\Rep_k(\mathbb{R}^d)$ by Proposition~\ref{prop:recoverRd}(1). 
\end{remark}

\subsection{Erosion neighborhoods}

In the setting of classical translation-based interleavings, the notion of $\epsilon$-erosion neighborhoods and the associated erosion neighborhood distance were introduced in \cite{Bjerkevik25}.
Motivated by this, we introduce the height-interleaving analogue (Definition~\ref{def:EN})
and establish the analogous basic properties, including a construction of $r$-height-interleavings from erosion neighborhoods (Theorem~\ref{thm:erosion neighborhood}), as well as
a comparison result between the height-interleaving distance $d_{\rho}$ and the erosion neighborhood distance
$d_{\rho\text{-EN}}$ (Theorem~\ref{thm:EN-summary}).

\medskip
To formulate erosion neighborhoods, we first introduce the image and kernel functors associated with the canonical maps $\eta_r^{\L}$ and $\eta_r^{\R}$.
We denote by $\Im_r,\Ker_r \colon \Rep_k(P)\to \Rep_k(P)$ the endofunctors defined as the image and kernel of the natural transformations
$\eta_{r}^{\L}\colon \L_r \Rightarrow \id$ and $\eta_{r}^{\R} \colon \id\Rightarrow \R_r$, respectively.
Explicitly, for $M\in \Rep_k(P)$, we set
\[
\Im_rM := \im (\eta_{r,M}^{\L}\colon \L_rM \to M) \subseteq M,
\qand
\Ker_rM := \ker (\eta_{r,M}^{\R}\colon M\to \R_rM) \subseteq M
\]
regarded as submodules of $M$.

We give basic properties of these functors.

\begin{proposition}\label{prop:KerIm}
The following statements hold.
\begin{enumerate}[\rm (1)]
\item $\Ker_r$ is left exact.
\item $\Im_r$ preserves monomorphisms and epimorphisms.
\item For $s\geq r$, we have $\Ker_r\subseteq \Ker_s$ and $\Im_s\subseteq \Im_r$ as subfunctors.
\item For all $s,r\geq 0$, $\Im_{s}\Im_{r}$ is a subfunctor of $\Im_{s+r}$.
\item For any $M\in \Rep_k(P)$, we have $\Ker_rM=M$ if and only if $\Im_rM=0$.
\end{enumerate}
\end{proposition}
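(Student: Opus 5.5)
The plan is to prove each item directly from the structure of $\L_r$, $\R_r$ and the canonical maps $\eta^{\L}$, $\eta^{\R}$. Items (1) and (2) use exactness of adjoints; items (3) and (4) use the factorizations of the $\eta$'s through $\eta_{s,r}$ and $\mu$; item (5) uses the adjunction.

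For (1), $\Ker_r$ is the kernel of $\eta^{\R}_r\colon \id\Rightarrow\R_r$. Take a short exact sequence $0\to M'\to M\to M''$. Both $\id$ and $\R_r$ are left exact, the latter by Proposition~\ref{prop:adjoint LrRr}. A diagram chase (snake-type argument) in the commutative ladder with rows $0\to M'\to M\to M''$ and $0\to \R_rM'\to\R_rM\to\R_rM''$ shows that $0\to\Ker_rM'\to\Ker_rM\to\Ker_rM''$ is exact. This only uses that $\R_rM'\to\R_rM$ is mono, to get exactness in the middle.

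For (2), apply the general fact recalled in the proof of Proposition~\ref{prop:erosion functor}(1) to $\eta^{\L}_r\colon\L_r\Rightarrow\id$. Since $\L_r$ preserves epis, so does $\Im_r$. Since $\id$ preserves monos, so does $\Im_r$.

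For (3), with $s\ge r$ we have $\eta^{\L}_{s}=\eta^{\L}_r\circ\eta^{\L}_{s,r}$. Hence $\im\eta^{\L}_{s,M}\subseteq\im\eta^{\L}_{r,M}$, which gives $\Im_s\subseteq\Im_r$. Dually, $\eta^{\R}_s=\eta^{\R}_{r,s}\circ\eta^{\R}_r$, so $\ker\eta^{\R}_{r,M}\subseteq\ker\eta^{\R}_{s,M}$, which gives $\Ker_r\subseteq\Ker_s$. All inclusions are natural in $M$.

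For (4), I will show that $\Im_s(\Im_rM)\subseteq\Im_{s+r}M$ inside $M$. Consider the epimorphism $\pi\colon\L_rM\twoheadrightarrow\Im_rM$ and the monomorphism $\iota\colon\Im_rM\hookrightarrow M$. By (2)/right exactness, $\L_s(\pi)$ is epi. Hence $\Im_s(\Im_rM)$ is the image of the composite
\[
\L_s\L_rM\xrightarrow{\L_s\pi}\L_s\Im_rM\to\Im_rM\xrightarrow{\iota}M.
\]
By naturality of $\eta^{\L}_s$, this composite equals $\eta^{\L}_{r,M}\circ\eta^{\L}_{s,\L_rM}$. The key check, which I expect to be the main point, is that this equals $\eta^{\L}_{s+r,M}\circ\mu^{\L}_{s,r,M}$. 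This is verified componentwise on colimit injections: both maps send $M(y)$, indexed by $y\in x^{\downarrow_r}$ with $x\in a^{\downarrow_s}$, to $M(a)$ via $M(y\le a)$. Therefore the image lies in $\im\eta^{\L}_{s+r,M}=\Im_{s+r}M$, naturally in $M$.

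For (5), $\Ker_rM=M$ means $\eta^{\R}_{r,M}=0$. Under the adjunction, $(\eta^{\R}_{r,M})^{\sharp}=e_{r,M}^{\sharp}$ corresponds; more precisely, $\eta^{\R}_{r,M}=(\eta^{\L}_{r,M})^{\flat}$. I would check this from the explicit formulas: both have components $M(a)\to M(y)$ for $y\in a^{\uparrow_r}$. Since $(-)^{\flat}$ is a bijection of abelian groups, additive by the naturality of the adjunction in additive categories, $\eta^{\R}_{r,M}=0$ if and only if $\eta^{\L}_{r,M}=0$, that is, if and only if $\Im_rM=0$.
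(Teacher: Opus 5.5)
Your proposal is correct and follows the paper's proof item by item. Items (1)--(3) and (5) are the same arguments: the diagram chase in (1) just proves the general fact the paper cites, and in (5) the key identity is $\eta^{\R}_{r,M}=(\eta^{\L}_{r,M})^{\flat}$ together with the bijection preserving zero (the preceding phrase about $e_{r,M}^{\sharp}$ should be dropped). In (4) you use naturality of $\eta^{\L}_s$ at $\pi$ and compare $\eta^{\L}_{r,M}\circ\eta^{\L}_{s,\L_rM}$ with $\eta^{\L}_{s+r,M}\circ\mu^{\L}_{s,r,M}$ on colimit injections, whereas the paper uses naturality at $\iota$ and the square $\L_s(\iota)\circ\L_s(\pi)=\eta^{\L}_{s+r,s,M}\circ\mu^{\L}_{s,r,M}$; these amount to the same check.
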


\begin{proof}
(1) The assertion follows from the facts that $\Ker_r$ is defined as the kernel of $\eta_{r}^{\R}\colon \id\Rightarrow \R_r$ and
the kernel of a natural transformation between left exact functors is left exact.

(2) Recall that $\Im_r$ is defined as the image of the natural transformation $\eta_{r}^{\L}\colon \L_r\Rightarrow \id$.
Then, the assertion follows from a similar argument to the proof of Proposition~\ref{prop:erosion functor}(1).

(3) For $s\ge r$, we have factorizations of natural transformations
$\eta_s^{\L} = \eta_r^{\L}\circ \eta_{s,r}^{\L}$ and
$\eta_s^{\R} = \eta_{r,s}^{\R}\circ \eta_r^{\R}$.
Taking images and kernels, we obtain $\Im_s M \subseteq \Im_r M$ and $\Ker_r M \subseteq \Ker_s M$ for all $M\in \Rep_k(P)$, as claimed.

(4) Let $M\in \Rep_k(P)$.
We write
\[
\L_rM \xrightarrow{\;\pi\;} \Im_r M \xrightarrow{\;\iota\;} M
\]
for the canonical surjection and inclusion.
Then, we obtain the following commutative diagram
\begin{equation}\label{eq:ImIm_to_Im}
\xymatrix@C42pt@R32pt{
\L_s(\L_rM) \ar@{->}[r]^{\L_s(\pi)} \ar[d]_{\mu_{s,r,M}^{\L}} \ar@{}[rd]|{\circlearrowleft} &
\L_s(\Im_rM) \ar[r]^-{\eta_{s,\Im_rM}^{\L}} \ar[d]^{\L_s(\iota)} \ar@{}[rd]|{\circlearrowleft} &
\Im_rM \ar@{->}[d]^{\iota} \\
\L_{s+r}M \ar[r]^-{\eta_{s+r,s,M}^{\L}} &
\L_sM \ar[r]^{\eta_{s,M}^{\L}} &
M.
}
\end{equation}
Since $\L_s$ is right exact, $\L_s(\pi)$ is an epimorphism.
Thus, we identify $\Im_s(\Im_rM)$ with $\im(\L_s(\L_rM) \to M)$.
By commutativity, the morphism $\L_s(\L_rM)\to M$ factors through $\L_{s+r}M \to M$, and therefore,
\[
\Im_s(\Im_rM) = \im(\L_s(\L_rM) \to M) \subseteq \im(\L_{s+r}M \to M) = \Im_{s+r}M.
\]
This inclusion is natural in $M$, hence defines a natural transformation $\Im_s\Im_r\Rightarrow \Im_{s+r}$ whose components are monomorphisms.

(5) By definition, $M=\Ker_rM$ holds if and only if $\eta^{\R}_{r,M}\colon M\to \R_rM$ is the zero morphism.
Also, $\Im_rM=0$ holds if and only if $\eta^{\L}_{r,M}\colon \L_rM\to M$ is the zero morphism.
Under the adjunction $\L_r\dashv \R_r$, the morphisms $\eta^{\R}_{r,M}$ and $\eta^{\L}_{r,M}$ are mates of each other.
Hence $\eta^{\R}_{r,M}=0$ if and only if $\eta^{\L}_{r,M}=0$.
\end{proof}

By definition, the $r$-erosion $\E_rM$ is given as the image of the composite \eqref{eq:erosion} and hence a subquotient of $M$.
Explicitly, there is an isomorphism
\begin{equation}\label{eq:ErImKer}
\E_rM \cong \Im_rM/(\Im_rM\cap \Ker_rM).
\end{equation}

In view of \eqref{eq:ErImKer}, it is natural to consider more general subquotients of $M$ obtained by
enlarging $\Im_rM$ and/or shrinking $\Ker_rM$. 
This leads to the following definition.

\begin{definition}\label{def:EN}
Let $M\in \Rep_k(P)$ and $r \geq0$. 
An \emph{$r$-erosion neighborhood} of $M$ is a subquotient of $M$ of the form $M_1/M_2$, where $M_2 \subseteq M_1\subseteq M$ are submodules such that $\Im_rM \subseteq M_1$ and $M_2 \subseteq \Ker_rM$. 
We denote by $\EN_r(M)$ the set of isomorphism classes of $r$-erosion neighborhoods of $M$. 
\end{definition}

Our result is the following.

\begin{theorem}\label{thm:erosion neighborhood}
Let $M\in \Rep_k(P)$ and $r\geq 0$.
\begin{enumerate}[\rm (1)]
\item $\E_rM$ is an $r$-erosion neighborhood of $M$.
\item Let $N\in \EN_r(M)$. Then $M$ and $N$ are $r$-height-interleaved with respect to $\rho$.
\end{enumerate}
In particular, $M$ and $\E_rM$ are $r$-height-interleaved with respect to $\rho$.
\end{theorem}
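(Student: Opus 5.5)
Part (1) follows from \eqref{eq:ErImKer}. Take $M_1:=\Im_rM$ and $M_2:=\Im_rM\cap\Ker_rM$. Then $\Im_rM\subseteq M_1$ and $M_2\subseteq \Ker_rM$, so $\E_rM\cong M_1/M_2$ lies in $\EN_r(M)$. The final assertion then follows from (2).

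For (2), write $N=M_1/M_2$ with $\Im_rM\subseteq M_1$ and $M_2\subseteq\Ker_rM$. Let $j\colon M_1\hookrightarrow M$ be the inclusion and $\pi\colon M_1\twoheadrightarrow N$ the projection. Since $\eta^{\L}_{r,M}$ factors through $\Im_rM\subseteq M_1$, we get $\lambda\colon \L_rM\to M_1$ with $j\lambda=\eta^{\L}_{r,M}$. Since $\eta^{\R}_{r,M}$ vanishes on $\Ker_rM\supseteq M_2$, $\eta^{\R}_{r,M}\circ j$ vanishes on $M_2$, so we get $\kappa\colon N\to \R_rM$ with $\kappa\pi=\eta^{\R}_{r,M}\circ j$. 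I would define the interleaving data through their adjoint forms:
\[
p^{\sharp}:=\pi\lambda\colon \L_rM\to N,\qquad q:=\kappa\colon N\to\R_rM .
\]
Then $q\circ p^{\sharp}=\kappa\pi\lambda=\eta^{\R}_{r,M}\, j\lambda=\eta^{\R}_{r,M}\eta^{\L}_{r,M}=e_{r,M}$, which gives the first identity.

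The second identity is $p\circ q^{\sharp}=e_{r,N}$, where $p=(\pi\lambda)^{\flat}$ and $q^{\sharp}=\kappa^{\sharp}\colon\L_rN\to M$. The cleanest route is to use naturality of $e_r$ and of the adjunction. By naturality of $\eta^{\L}_r$ and $\eta^{\R}_r$ applied to $j$ and $\pi$, we have $\kappa^{\sharp}\circ\L_r(\pi)=(\kappa\pi)^{\sharp}=(\eta^{\R}_{r,M}j)^{\sharp}=\eta^{\L}_{r,M}\circ\L_r(j)$, using that the mate of $\eta^{\R}_{r,M}\circ j$ is $\eta^{\L}_{r,M}\circ \L_r(j)$. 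Dually, $\R_r(j)\circ(\pi\lambda)^{\flat}=(j\lambda)^{\flat}=\eta^{\R}_{r,M}$ is false as stated, so I would instead compute $p=\R_r(\pi)\circ\lambda^{\flat}$, with $\lambda^{\flat}\colon M\to\R_rM_1$.

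Since $\L_r$ is right exact, $\L_r(\pi)$ is an epimorphism, so it suffices to check $p\,q^{\sharp}\L_r(\pi)=e_{r,N}\L_r(\pi)$. The left side is $\R_r(\pi)\lambda^{\flat}\eta^{\L}_{r,M}\L_r(j)$. The right side is $\R_r(\pi)\,e_{r,M_1}$ by naturality of $e_r$. So it suffices to show $\lambda^{\flat}\eta^{\L}_{r,M}\L_r(j)=e_{r,M_1}$ as maps $\L_rM_1\to\R_rM_1$. Composing with the monomorphism $\R_r(j)$ (monic because $\R_r$ is left exact), the left side becomes $(j\lambda)^{\flat}\eta^{\L}_{r,M}\L_r(j)=\eta^{\R}_{r,M}\eta^{\L}_{r,M}\L_r(j)$, using $(j\lambda)^{\flat}=(\eta^{\L}_{r,M})^{\flat}=\eta^{\R}_{r,M}$ (mates). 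The right side becomes $\R_r(j)e_{r,M_1}=e_{r,M}\L_r(j)$ by naturality. These are equal, which finishes the argument.

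The main obstacle is exactly this second identity. It requires careful bookkeeping of the mate identities $(\eta^{\L}_{r,M})^{\flat}=\eta^{\R}_{r,M}$, $(f\circ g)^{\flat}=\R_r(f)\circ g^{\flat}$ and $(g\circ f)^{\sharp}=g^{\sharp}\circ\L_r(f)$, together with the epi/mono cancellations supplied by right exactness of $\L_r$ and left exactness of $\R_r$. The rest is routine.
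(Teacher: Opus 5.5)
Your proof is correct and follows the paper's argument. You use the same interleaving pair $p=(\pi\lambda)^{\flat}$, $q=\kappa$, the same reduction through the epimorphism $\L_r(\pi)$, and the same mate identities. The only difference is in the last step: you cancel the monomorphism $\R_r(j)$ (using left exactness of $\R_r$), whereas the paper derives $\alpha^{\flat}\circ\iota=\eta^{\R}_{r,N}\circ\pi$ from $\alpha\circ\L_r(\iota)=\pi\circ\eta^{\L}_{r,M_1}$, which cancels the monomorphism $\iota$ directly. You should delete the ill-typed aside ``$\R_r(j)\circ(\pi\lambda)^{\flat}=(j\lambda)^{\flat}$ is false as stated,'' since it plays no role in your final argument.
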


\begin{proof}
(1) This is immediate from \eqref{eq:ErImKer}.

(2) Suppose that $N\cong M_1/M_2\in \EN_r(M)$, where $M_2\subseteq M_1\subseteq M$ with
$\Im_rM\subseteq M_1$ and $M_2\subseteq \Ker_rM$.
Let $\iota \colon M_1\hookrightarrow M$ and $\pi\colon M_1\twoheadrightarrow N$, so that
\[
0\longrightarrow M_2\longrightarrow M_1 \xrightarrow{\ \pi\ } N\longrightarrow 0
\]
is exact.

Define $\alpha\colon \L_rM\to N$ by
\[
\alpha\colon \L_rM \twoheadrightarrow \Im_rM \hookrightarrow M_1 \xrightarrow{\ \pi\ } N.
\]
Then we have $\alpha\circ \L_r(\iota)=\pi\circ \eta^{\L}_{r,M_1}$.
Moreover, since $M_2\subseteq \Ker_rM$, the composite $\eta^{\R}_{r,M}\circ \iota\colon M_1\to \R_rM$ vanishes on $M_2$ and hence factors uniquely through $\pi$.
Let $\beta\colon N\to \R_rM$ be the induced morphism, so that $\beta\circ \pi=\eta^{\R}_{r,M}\circ \iota$.

We claim that $(\alpha^{\flat},\beta)$ form an $r$-height-interleaving between $M$ and $N$. 
By construction,
\[
\beta\circ \alpha=\eta^{\R}_{r,M}\circ \eta^{\L}_{r,M}=e_{r,M}.
\]
It remains to show $\alpha^{\flat} \circ \beta^{\sharp} = e_{r,N}$. 
Since $\L_r$ is right exact, applying $\L_r$ to $\pi$ yields an epimorphism
$\L_r(\pi)\colon \L_rM_1\twoheadrightarrow \L_rN$.
Hence it suffices to prove
\[
(\alpha^{\flat}\circ \beta^{\sharp})\circ \L_r(\pi)=e_{r,N}\circ \L_r(\pi).
\]

By definition of $\beta^{\sharp}$ (as the left adjoint of $\beta$) and the relation $\beta\circ \pi=\eta^{\R}_{r,M}\circ \iota$,
we have
\begin{equation}\label{eq:beta-sharp}
\beta^{\sharp}\circ \L_r(\pi)=
\eta^{\L}_{r,M}\circ \L_r(\iota) = 
\iota \circ \eta^{\L}_{r,M_1}
\colon \L_rM_1\to M.
\end{equation}
Moreover, since $\alpha^{\flat}$ is the right adjoint of $\alpha$, the relation
$\alpha\circ \L_r(\iota)=\pi\circ \eta^{\L}_{r,M_1}$ 
implies
\begin{equation}\label{eq:alpha-flat}
\alpha^{\flat}\circ \iota = \eta^{\R}_{r,N}\circ \pi\colon M_1\to \R_rN.
\end{equation}

Combining these gives
\[
(\alpha^{\flat}\circ \beta^{\sharp})\circ \L_r(\pi)
\overset{\eqref{eq:beta-sharp}}{=}
(\alpha^{\flat}\circ \iota)\circ \eta^{\L}_{r,M_1}
\overset{\eqref{eq:alpha-flat}}{=}
\eta^{\R}_{r,N}\circ \pi\circ \eta^{\L}_{r,M_1}.
\]
By naturality of $\eta_r^{\L}$, we have
\[
\pi\circ \eta^{\L}_{r,M_1}=\eta^{\L}_{r,N}\circ \L_r(\pi).
\]
Therefore,
\[
(\alpha^{\flat}\circ \beta^{\sharp})\circ \L_r(\pi)
=\eta^{\R}_{r,N}\circ \eta^{\L}_{r,N}\circ \L_r(\pi)
=e_{r,N}\circ \L_r(\pi),
\]
as desired.
\end{proof}

Theorem~\ref{thm:erosion neighborhood} thus provides a systematic way to construct $r$-height-interleavings from suitable subquotients of $M$.

We end this subsection by defining a distance on $\Rep_k(P)$ based on erosion neighborhoods.
For $M,N\in \Rep_k(P)$, set
\begin{equation}\label{eq:def-drhoEN}
d_{\rho\text{-EN}}(M,N)
:=\inf\{\,r\ge 0 \mid \EN_r(M)\cap \EN_r(N)\neq \emptyset\,\}.
\end{equation}

The following result relates $d_{\rho\text{-EN}}$ to the height-interleaving distance $d_\rho$.

\begin{theorem}\label{thm:EN-summary}
Let $P$ be a poset equipped with a height-difference function $\rho$.
\begin{enumerate}[\rm (1)]
\item If $M$ and $N$ are $r$-height-interleaved with respect to $\rho$, then there exists
$Q\in \EN_r(M)\cap \EN_r(N)$.
In particular,
\begin{equation}\label{eq:dEN_le_drho}
d_{\rho\text{-EN}}(M,N)\le d_\rho(M,N).
\end{equation}

\item Assume that $(P,\rho)$ has CIP.
Then $d_{\rho\text{-EN}}$ satisfies a $c(\rho)$-relaxed triangle inequality, and for all
$M,N\in \Rep_k(P)$ we have
\begin{equation}\label{eq:ENvsI-under-CIP}
d_{\rho\text{-EN}}(M,N)\le d_\rho(M,N)\le 2\,d_{\rho\text{-EN}}(M,N)+c(\rho).
\end{equation}
In particular, if $c(\rho)=0$, then $d_{\rho\text{-EN}}$ is an extended pseudo-distance and
\[
d_{\rho\text{-EN}}(M,N)\le d_\rho(M,N)\le 2\,d_{\rho\text{-EN}}(M,N).
\]
\end{enumerate}
\end{theorem}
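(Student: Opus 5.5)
For part (1), I will produce a common erosion neighborhood from an $r$-height-interleaving $(p,q)$ with $p\colon M\to \R_rN$, $q\colon N\to\R_rM$, $e_{r,M}=q\circ p^{\sharp}$ and $e_{r,N}=p\circ q^{\sharp}$. The candidate is the image $Q:=\im(p^{\sharp}\colon \L_rM\to N)$, a submodule of $N$.

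First I check that $Q\in\EN_r(M)$. Set $M_1:=\im(q^{\sharp})+\Im_rM\subseteq M$ and $M_2:=\ker(p^{\sharp}|\ldots)$. A cleaner route is to work symmetrically with the subquotient description $\E$-style, defining
\[
Q:=\im\bigl(p^{\sharp}\bigr)\big/\bigl(\im(p^{\sharp})\cap\ker(q)\bigr).
\]
This is a subquotient of $N$ with $N_1=\im(p^{\sharp})$ and $N_2=\im(p^{\sharp})\cap\ker q$. Since $p\circ q^{\sharp}=e_{r,N}$ only controls $\Im_rN$ through $q^{\sharp}$, this is not obviously of the required form. So I will instead take the symmetric version
\[
N_1:=\im(p^{\sharp})+\Im_rN,\qquad N_2:=\ker(q)\cap\Ker_rN.
\]
Then $\Im_rN\subseteq N_1$ and $N_2\subseteq\Ker_rN$ hold by construction, so $N_1/N_2\in\EN_r(N)$. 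Symmetrically, $M_1:=\im(q^{\sharp})+\Im_rM$ and $M_2:=\ker(p)\cap\Ker_rM$ give $M_1/M_2\in\EN_r(M)$.

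The key step, and the main obstacle, is to exhibit an isomorphism $M_1/M_2\cong N_1/N_2$. I would first test whether the simpler choice $N_1=\im p^{\sharp}$, $N_2=\ker q\cap \im p^{\sharp}$ already works. The relevant facts are:
\begin{itemize}
\item $\Im_rN=\im(\eta^{\L}_{r,N})$, and $e_{r,N}=p\circ q^{\sharp}$;
\item the mate identities give $q\circ p^{\sharp}=\eta^{\R}\eta^{\L}$, and $p^{\sharp}$ composed with $\eta^{\L}$ is related to $\L_r$ applied to maps via naturality.
\end{itemize}
Concretely, I would show that $p^{\sharp}$ and $q^{\sharp}$ induce mutually inverse maps between the two subquotients. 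The first thing to try is to identify both subquotients with $\im(e_{r,M})\cong\E_rM$-like objects: $q$ restricted to $N_1$ lands in $\R_rM$ with image $\E_rM$-type. So the most promising choice is
\[
Q:=\im\bigl(q|_{N_1}\bigr)\subseteq\R_rM,
\]
and then verify that it is isomorphic to a subquotient of each module. This needs the compatibility $p\circ q^{\sharp}=e_{r,N}$, plus the fact that $q$ kills exactly $N_2$ modulo $\Ker_r$. I expect this kernel bookkeeping to be the hard part.

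The inequality \eqref{eq:dEN_le_drho} then follows by taking the infimum, using the monotonicity in Lemma~\ref{lem:monotonicity-height-interleaving}.

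For part (2), the lower bound is part (1). For the upper bound, if $Q\in\EN_r(M)\cap\EN_r(N)$, Theorem~\ref{thm:erosion neighborhood}(2) gives $M\overset{\rho}{\underset{r}{\sim}}Q\overset{\rho}{\underset{r}{\sim}}N$. By Theorem~\ref{thm:relaxed TI}, the relaxed composition under CIP then yields an $(2r+c)$-interleaving for every $c>c(\rho)$, so $d_\rho\le 2d_{\rho\text{-EN}}+c(\rho)$ after taking infima. For the relaxed triangle inequality of $d_{\rho\text{-EN}}$, I would chain the two estimates to get
\[
d_{\rho\text{-EN}}(M,N)\le d_\rho(M,N)\le d_\rho(M,X)+d_\rho(X,N)+c(\rho).
\]
This only bounds $d_{\rho\text{-EN}}$ via $d_\rho$, which loses a factor of $2$. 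A direct proof would therefore be needed: compose the subquotients, using Proposition~\ref{prop:KerIm}(4) and its dual for $\Ker$ under CIP to show that an $r$-neighborhood of an $s$-neighborhood is an $(s+r+c)$-neighborhood. I expect this to be a second genuine difficulty.
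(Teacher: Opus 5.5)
Your upper bound $d_\rho\le 2\,d_{\rho\text{-EN}}+c(\rho)$ in (2) is the paper's argument: Theorem~\ref{thm:erosion neighborhood}(2) twice, then the CIP composition behind Theorem~\ref{thm:relaxed TI}. The proposal still has genuine gaps, and in (1) the candidate you finally settle on is wrong.

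\textbf{Part (1).} Your ``symmetric'' subquotients $M_1/M_2$ and $N_1/N_2$ are the right ones, once $M_2$ is intersected with $M_1$ and $N_2$ with $N_1$. However, the isomorphism between them is the whole content of (1), and you leave it open.

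Your fallback $Q:=\im(q|_{N_1})\subseteq\R_rM$ records only the $q$-component, so its kernel $N_1\cap\ker q$ need not lie in $\Ker_rN$. Here is a counterexample. Take $\mathbb{R}$ with $\rho_{\mathrm{diag}}$ and $r=1$. Let $M=k_{[-1/2,1/2)}$ and $N=k_{[-1,6/5)}$, with the nonzero maps $p\colon M\to\R_1N$ and $q\colon N\to\R_1M$. This is a $1$-height-interleaving. Then $N_1=k_{[0,6/5)}$ and $q|_{N_1}=0$. But $0\notin\EN_1(N)$, because $\Im_1N=k_{[0,6/5)}\not\subseteq\Ker_1N=k_{[1/5,6/5)}$.

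The missing idea is to keep both components. The two interleaving identities, together with the adjunction squares $\eta^{\R}_{r,M}\circ q^{\sharp}=q\circ\eta^{\L}_{r,N}$ and $p\circ\eta^{\L}_{r,M}=\eta^{\R}_{r,N}\circ p^{\sharp}$, say exactly that
\[
\begin{bmatrix}\eta^{\R}_{r,M}\\ p\end{bmatrix}\circ\begin{bmatrix}\eta^{\L}_{r,M} & q^{\sharp}\end{bmatrix}
=\begin{bmatrix}e_{r,M} & \eta^{\R}_{r,M}\circ q^{\sharp}\\ p\circ\eta^{\L}_{r,M} & e_{r,N}\end{bmatrix}
=\begin{bmatrix}q\\ \eta^{\R}_{r,N}\end{bmatrix}\circ\begin{bmatrix}p^{\sharp} & \eta^{\L}_{r,N}\end{bmatrix}.
\]
So the image $Q$ of this single map $\L_rM\oplus\L_rN\to\R_rM\oplus\R_rN$ has two factorizations. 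Applying $\im(\tilde g\tilde f)\cong\im\tilde f/(\im\tilde f\cap\ker\tilde g)$ to each (Lemma~\ref{lem:char-EN}) gives $M_1/M_2\cong Q\cong N_1/N_2$, with no kernel bookkeeping.

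\textbf{Part (2), relaxed triangle inequality for $d_{\rho\text{-EN}}$.} You only announce a direction, and it is incomplete in two ways.

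First, the composition step needs $\Im_{s+r+c}M\subseteq\Im_s(\Im_rM)$. This is the reverse of Proposition~\ref{prop:KerIm}(4), which gives $\Im_s\Im_r\subseteq\Im_{s+r}$. It comes instead from factoring $\eta^{\L}_{s+r+c}$ through $\L_s\L_r$ via $\sigma^{\L}_{s,r}=(\kappa^{\L}_{s,r})^{-1}\circ\theta^{\L}_{s,r}$; this is where CIP and (IV$_c$) enter. The kernel condition uses the dual factorization of $\eta^{\R}_{s+r+c}$ through $\R_s\R_r$, plus Proposition~\ref{prop:KerIm}(5) and the adjunction.

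Second, and more seriously, composition alone does not yield the triangle inequality. From $Q_1\in\EN_r(M)\cap\EN_r(X)$ and $Q_2\in\EN_s(X)\cap\EN_s(N)$ you must first produce a common $Q_3\in\EN_s(Q_1)\cap\EN_r(Q_2)$. The paper does this by a Zassenhaus-type refinement inside $X$:
\begin{itemize}
\item write $Q_i=X_i/X_i'$, and set $X_3=X_1\cap X_2$, $X_3'=(X_1'+X_2')\cap X_3$, $Q_3:=X_3/X_3'$;
\item verify the erosion conditions using $\Im_s(Q_1)=\pi_1(\Im_s X_1)$ (right exactness of $\L_s$) and $\Ker_s(X_1)=X_1\cap\Ker_s(X)$.
\end{itemize}
Only then does composing on both sides give $Q_3\in\EN_{s+r+c}(M)\cap\EN_{s+r+c}(N)$.
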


The proof of Theorem~\ref{thm:EN-summary} relies on three lemmas
(Lemmas~\ref{lem:char-EN}--\ref{lem:EN-triangle-inequality}) stated below.
Their proofs are given in Subsections~\ref{sec:EN-proof1}--\ref{sec:EN-proof3}.

\begin{lemma}\label{lem:char-EN}
Let $M\in \Rep_k(P)$ and $r\geq 0$.
Every module $N$ of the form
\begin{equation}\label{eq:char_EN}
N \cong \im\Big(
\L_rM \oplus X
\xrightarrow{
\footnotesize \begin{bmatrix} \eta_{r,M}^{\L} & f \end{bmatrix}}
M
\xrightarrow{
\footnotesize \begin{bmatrix} \eta_{r,M}^{\R} \\ g \end{bmatrix}}
\R_rM \oplus Y
\Big)
\end{equation}
belongs to $\EN_r(M)$.
Moreover, any $r$-erosion neighborhood of $M$ can be written in this form.
\end{lemma}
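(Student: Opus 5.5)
We prove the two directions separately, working pointwise in the abelian category $\Rep_k(P)$ and viewing every module in \eqref{eq:char_EN} as a subquotient of $M$.

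\emph{First direction.} Let $N$ be given by \eqref{eq:char_EN}. Put
\[
M_1:=\im\bigl[\eta^{\L}_{r,M}\ f\bigr]=\Im_rM+\im f\subseteq M
\qand
M_2:=\ker\begin{bmatrix}\eta^{\R}_{r,M}\\ g\end{bmatrix}=\Ker_rM\cap\ker g\subseteq M .
\]
Then $\Im_rM\subseteq M_1$ and $M_2\subseteq \Ker_rM$. For a composite $A\xrightarrow{u}M\xrightarrow{w}B$ we have the standard isomorphism $\im(w\circ u)\cong \im u/(\im u\cap\ker w)$. Hence
\[
N\cong M_1/(M_1\cap M_2).
\]
Set $M_2':=M_1\cap M_2$. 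Then $M_2'\subseteq M_1$ and $M_2'\subseteq M_2\subseteq\Ker_rM$, so $N\cong M_1/M_2'$ is an $r$-erosion neighborhood of $M$. The same identification in the special case $X=Y=0$ gives back \eqref{eq:ErImKer}.

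\emph{Converse.} Let $N\cong M_1/M_2$ with $\Im_rM\subseteq M_1$ and $M_2\subseteq \Ker_rM\cap M_1$. Take $X:=M_1$ with $f:=\iota\colon M_1\hookrightarrow M$, and $Y:=M/M_2$ with $g$ the canonical projection $M\twoheadrightarrow M/M_2$.
\begin{itemize}
\item The image of $[\eta^{\L}_{r,M}\ \iota]$ is $\Im_rM+M_1=M_1$.
\item The kernel of $\left[\begin{smallmatrix}\eta^{\R}_{r,M}\\ g\end{smallmatrix}\right]$ is $\Ker_rM\cap M_2=M_2$.
\end{itemize}
By the isomorphism above, the image in \eqref{eq:char_EN} is then $M_1/(M_1\cap M_2)=M_1/M_2\cong N$.

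Both directions reduce to the image-of-composite identity together with the descriptions of image and kernel for block matrices. The only point to check is that no hidden condition is needed, for instance $M_2\subseteq M_1$. This is exactly why we intersect with $M_1$ in the first direction; in the converse it holds by the definition of an erosion neighborhood.
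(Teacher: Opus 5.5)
Your proof is correct and follows the paper's argument essentially step for step. Like you, the paper uses $\im(w\circ u)\cong \im u/(\im u\cap \ker w)$. In the first direction it takes $M_1=\im[\eta^{\L}_{r,M}\ f]$ and $M_1\cap\ker\left[\begin{smallmatrix}\eta^{\R}_{r,M}\\ g\end{smallmatrix}\right]$, and in the converse it takes $X=M_1$, $Y=M/M_2$ with the inclusion and the canonical projection.
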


\begin{lemma}\label{lem:EN-composition}
Assume that $(P,\rho)$ has CIP and satisfies {\rm (IV$_c$)} for some $c\ge 0$.
Let $s,r\ge 0$.
If $N\in \EN_r(M)$ and $Q\in \EN_s(N)$, then $Q\in \EN_{s+r+c}(M)$.
\end{lemma}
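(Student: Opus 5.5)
Write $N\cong M_1/M_2$ with $\Im_rM\subseteq M_1$ and $M_2\subseteq \Ker_rM$, and let $\iota\colon M_1\hookrightarrow M$ and $\pi\colon M_1\twoheadrightarrow N$ be the inclusion and the projection. Write $Q\cong N_1/N_2$ with $\Im_sN\subseteq N_1$ and $N_2\subseteq \Ker_sN$. The plan is to pull $Q$ back to $M$ and then verify the two containment conditions at level $s+r+c$. Set
\[
M_1':=\pi^{-1}(N_1)\subseteq M_1\subseteq M \qand M_2':=\pi^{-1}(N_2)\subseteq M_1'.
\]
Then $M_2\subseteq M_2'$ and $M_1'/M_2'\cong N_1/N_2\cong Q$. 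It therefore suffices to prove
\[
\Im_{s+r+c}M\subseteq M_1' \qand M_2'\subseteq \Ker_{s+r+c}M.
\]
Both verifications use the factorizations through $\L_s\L_r$ and $\R_s\R_r$ that CIP and (IV$_c$) provide. By Corollary~\ref{cor:CIP-kappa-iso} and Lemma~\ref{lem:theta_plusc}, the proof of Theorem~\ref{thm:relaxed TI} gives $\sigma^{\L}$ and its mate $\sigma^{\R}$ satisfying \eqref{eq:sigmaL-factor} and \eqref{eq:sigmaR-factor}. Both index orders are available, since $s$ and $r$ enter symmetrically.

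\emph{Image condition.} First, $\eta^{\L}_{s+r+c,M}=\eta^{\L}_{s+r,M}\circ\mu^{\L}_{s,r,M}\circ\sigma^{\L}_{s,r,M}$. Second, as in the commutative diagram \eqref{eq:ImIm_to_Im}, we have $\eta^{\L}_{s+r,M}\circ\mu^{\L}_{s,r,M}=\eta^{\L}_{r,M}\circ\eta^{\L}_{s,\L_rM}$, and this map factors through $\L_s(\Im_rM)\to\Im_rM\hookrightarrow M$. Combining the two, $\Im_{s+r+c}M\subseteq \Im_s(\Im_rM)$. Since $\Im_rM\subseteq M_1$ and $\Im_s$ is a functorial image that preserves monomorphisms (Proposition~\ref{prop:KerIm}(2)), we get $\Im_s(\Im_rM)\subseteq \Im_sM_1$ as submodules of $M$. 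By naturality of $\eta_s^{\L}$ we have $\pi(\Im_sM_1)\subseteq \Im_sN\subseteq N_1$. Hence $\Im_sM_1\subseteq\pi^{-1}(N_1)=M_1'$.

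\emph{Kernel condition.} Since $M_2\subseteq\Ker_rM$, the map $\eta^{\R}_{r,M}\circ\iota$ factors as $\beta\circ\pi$ for some $\beta\colon N\to\R_rM$, exactly as in the proof of Theorem~\ref{thm:erosion neighborhood}. Dually to the image case, I will use
\[
\eta^{\R}_{s+r+c,M}=\sigma^{\R}\circ\mu^{\R}\circ\eta^{\R}_{s+r,M},
\]
with the order chosen so that $\mu^{\R}\circ\eta^{\R}_{s+r,M}=\R_s(\eta^{\R}_{r,M})\circ\eta^{\R}_{s,M}\colon M\to\R_s\R_rM$. Precomposing with $\iota$ and using naturality of $\eta^{\R}_s$ gives
\[
\R_s(\eta^{\R}_{r,M})\circ\eta^{\R}_{s,M}\circ\iota
=\R_s(\eta^{\R}_{r,M}\circ\iota)\circ\eta^{\R}_{s,M_1}
=\R_s(\beta)\circ\R_s(\pi)\circ\eta^{\R}_{s,M_1}
=\R_s(\beta)\circ\eta^{\R}_{s,N}\circ\pi.
\]
This composite vanishes on $\pi^{-1}(\Ker_sN)\supseteq M_2'$. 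Therefore $M_2'\subseteq\Ker_{s+r+c}M$.

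The main obstacle is bookkeeping rather than any substantive difficulty. One must confirm two identities: $\eta^{\L}_{s+r}\circ\mu^{\L}_{s,r}=\eta^{\L}_r\circ\eta^{\L}_{s,\L_r}$, and its dual $\mu^{\R}\circ\eta^{\R}_{s+r}=\R_s(\eta^{\R}_r)\circ\eta^{\R}_s$. One must also make sure the $\sigma^{\R}$ factorization is taken with the index order matching the one used, namely $\R_s$ applied outermost. Both identities can be checked pointwise on the colimit and limit cocones, since all maps involved are induced by inclusions of index sets.
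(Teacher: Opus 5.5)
Your proof is correct. The setup ($M_1'=\pi^{-1}(N_1)$, $M_2'=\pi^{-1}(N_2)$) and the image condition are essentially the paper's argument: $\Im_{s+r+c}M\subseteq\Im_s(\Im_rM)\subseteq\Im_sM_1\subseteq M_1'$, obtained via $\sigma^{\L}$ and the diagram \eqref{eq:ImIm_to_Im}.

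You diverge from the paper on the kernel condition. The paper works with the submodule $M_2'$ and proceeds in five steps:
\begin{enumerate}
\item From $N_2\subseteq\Ker_sN$ it gets $\eta^{\R}_{s,N_2}=0$.
\item It converts this to $\eta^{\L}_{s,N_2}=0$ via Proposition~\ref{prop:KerIm}(5).
\item It deduces $\Im_s(M_2')\subseteq M_2$, so that $\eta^{\R}_{r,M_2'}\circ\eta^{\L}_{s,M_2'}=0$.
\item It transports this vanishing across $\L_s\dashv\R_s$ to the vanishing of $M_2'\to\R_s\R_rM_2'$.
\item It applies $\sigma^{\R}$ and concludes with $\Ker_{s+r+c}(M_2')\subseteq\Ker_{s+r+c}(M)$.
\end{enumerate}
You instead stay on $M$. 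You factor $\eta^{\R}_{r,M}\circ\iota=\beta\circ\pi$ as in Theorem~\ref{thm:erosion neighborhood}, then push $\eta^{\R}_s$ through by naturality. This gives $\eta^{\R}_{s+r+c,M}\circ\iota=\sigma^{\R}\circ\R_s(\beta)\circ\eta^{\R}_{s,N}\circ\pi$, which visibly kills $\pi^{-1}(\Ker_sN)\supseteq M_2'$.

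Your route makes the kernel half the exact dual of the image half. It removes the mates step through Proposition~\ref{prop:KerIm}(5). It also avoids the paper's implicit uses of the fact that $\R_s$ and $\R_r$ preserve monomorphisms, which are needed there to pass kernel conditions between $N_2\subseteq N$ (step 1) and between $M_2'\subseteq M$ (the appeal to $M_2\subseteq\Ker_r(M)$ inside step 3). Both routes rest on the same identity $\mu^{\R}\circ\eta^{\R}_{s+r}=\R_s(\eta^{\R}_r)\circ\eta^{\R}_s$, together with \eqref{eq:sigmaR-factor} in the index order with $\R_s$ outermost. The paper uses this identity silently, whereas you state it and correctly note that it can be checked on limit cones. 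The paper's version illustrates the interplay of $\Im_s$ and $\Ker_s$ under the adjunction; yours is shorter and more symmetric.
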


\begin{lemma}\label{lem:EN-triangle-inequality}
Assume that $(P,\rho)$ has CIP and satisfies {\rm (IV$_c$)} for some $c\ge 0$.
Let $s,r\ge 0$.
If $Q_1 \in \EN_r(M)\cap \EN_r(X)$ and $Q_2\in \EN_s(X)\cap \EN_s(N)$, then there exists $Q_3 \in \EN_{s}(Q_1)\cap \EN_{r}(Q_2)$.
\end{lemma}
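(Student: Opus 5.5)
The plan is to reduce everything to a Zassenhaus (butterfly) type construction inside $X$, the common module, using only the exactness properties from Proposition~\ref{prop:KerIm}. By Definition~\ref{def:EN}, write $Q_1\cong X_1/X_2$ with $X_2\subseteq X_1\subseteq X$, $\Im_rX\subseteq X_1$ and $X_2\subseteq \Ker_rX$. Similarly write $Q_2\cong X_3/X_4$ with $X_4\subseteq X_3\subseteq X$, $\Im_sX\subseteq X_3$ and $X_4\subseteq \Ker_sX$. The data relating $Q_1$ to $M$ and $Q_2$ to $N$ will play no role; only these presentations inside $X$ are needed. I expect the hypotheses CIP and (IV$_c$) to be unused here; they only enter the later composition step via Lemma~\ref{lem:EN-composition}.

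I will take
\[
Q_3:=\frac{X_1\cap X_3}{(X_1\cap X_4)+(X_2\cap X_3)}.
\]
By the Zassenhaus lemma in the abelian category $\Rep_k(P)$ (computed pointwise), there are isomorphisms
\[
Q_3\cong \frac{(X_1\cap X_3)+X_2}{(X_1\cap X_4)+X_2}\cong \frac{(X_1\cap X_3)+X_4}{(X_2\cap X_3)+X_4}.
\]
The first exhibits $Q_3$ as a subquotient $A_1/A_2$ of $Q_1=X_1/X_2$. The second exhibits it as a subquotient of $Q_2=X_3/X_4$. It therefore suffices to prove $Q_3\in\EN_s(Q_1)$; the statement $Q_3\in\EN_r(Q_2)$ follows by the symmetric argument with the roles of $(r,X_1,X_2)$ and $(s,X_3,X_4)$ exchanged.

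For the upper bound on $\Im_s$, let $\pi\colon X_1\twoheadrightarrow Q_1$ be the projection. By Proposition~\ref{prop:KerIm}(2), $\Im_s$ preserves epimorphisms, so $\Im_sQ_1=\pi(\Im_sX_1)$. Since $\Im_s$ also preserves monomorphisms, naturality along $X_1\hookrightarrow X$ gives $\Im_sX_1\subseteq X_1\cap\Im_sX\subseteq X_1\cap X_3$. Hence $\Im_sQ_1\subseteq \pi(X_1\cap X_3)=A_1$.

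For the lower bound on $\Ker_s$, I use that $\R_s$ preserves monomorphisms, being left exact. Then $\Ker_sX_1=X_1\cap \Ker_sX$, and in particular $X_1\cap X_4\subseteq \Ker_sX_1$. By naturality of $\eta^{\R}_s$ along $\pi$, we have $\pi(\Ker_sX_1)\subseteq\Ker_sQ_1$. Since $\pi(X_2)=0$, this gives $A_2=\pi(X_1\cap X_4)\subseteq \Ker_sQ_1$. Together with the previous paragraph, $Q_3\in\EN_s(Q_1)$.

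The main points needing care are the two functoriality claims. The first is that $\Im_s$ of a quotient is the image of $\Im_s$ of the source; this holds because $\L_s$ is right exact, so $\L_s(\pi)$ is an epimorphism. The second is that $\Ker_s$ of a submodule is the intersection with $\Ker_sX$. I will also verify the bookkeeping in the Zassenhaus isomorphisms: the denominator lies in the numerator, and the isomorphisms are compatible with the chosen embeddings into $Q_1$ and $Q_2$. This is routine since everything is computed pointwise in $\Vect_k$.
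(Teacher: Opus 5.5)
Your proof is correct and is essentially the paper's argument. In your notation the paper takes $Q_3=(X_1\cap X_3)/\bigl((X_2+X_4)\cap X_1\cap X_3\bigr)$, whose denominator equals your $(X_1\cap X_4)+(X_2\cap X_3)$ by the modular law; it then identifies $Q_3$ with a subquotient $\pi(X_1\cap X_3)/\pi(\cdots)$ of $Q_1$ and checks the $\Im_s$ and $\Ker_s$ conditions exactly as you do, likewise without using CIP or (IV$_c$).
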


With the above lemmas in hand, we now complete the proof.

\begin{proof}[Proof of Theorem~\ref{thm:EN-summary}]
(1) Let $M$ and $N$ be $r$-height-interleaved with respect to $\rho$ via morphisms
$p\colon M\to \R_rN$ and $q\colon N\to \R_rM$.

Define
\begin{equation}\label{eq:Q for M}
Q :=
\im\Big(
\L_rM\oplus \L_rN \xrightarrow{\footnotesize
\begin{bmatrix} \eta_{r,M}^{\L} & q^{\sharp}\end{bmatrix}}
M \xrightarrow{\footnotesize
\begin{bmatrix} \eta_{r,M}^{\R}\\ p\end{bmatrix}}
\R_rM\oplus \R_rN
\Big).
\end{equation}
By Lemma~\ref{lem:char-EN}, this shows $Q\in \EN_r(M)$.

Using the interleaving identities and the adjunction squares
$\eta_{r,M}^{\R}\circ q^{\sharp}=q\circ \eta_{r,N}^{\L}$ and
$p\circ \eta_{r,M}^{\L}=\eta_{r,N}^{\R}\circ p^{\sharp}$,
we compute
\[
\begin{bmatrix}
\eta_{r,M}^{\R}\\[2pt] p
\end{bmatrix}
\circ
\begin{bmatrix}
\eta_{r,M}^{\L} & q^{\sharp}
\end{bmatrix}
=
\begin{bmatrix}
q\\[2pt] \eta_{r,N}^{\R}
\end{bmatrix}
\circ
\begin{bmatrix}
p^{\sharp} & \eta_{r,N}^{\L}
\end{bmatrix}.
\]
Therefore the two composites define the same morphism
$\L_rM\oplus \L_rN\to \R_rM\oplus \R_rN$, so their images coincide.
In particular,
\[
Q \cong
\im\Big(
\L_rM\oplus \L_rN \xrightarrow{\footnotesize
\begin{bmatrix} p^{\sharp} & \eta_{r,N}^{\L}\end{bmatrix}}
N \xrightarrow{\footnotesize
\begin{bmatrix} q\\ \eta_{r,N}^{\R}\end{bmatrix}}
\R_rM\oplus \R_rN
\Big),
\]
and hence $Q\in \EN_r(N)$.
Consequently, $Q\in \EN_r(M)\cap \EN_r(N)$.

(2) Assume that $(P,\rho)$ has CIP. 
If $c(\rho)=\infty$, then there is nothing to prove. 
Otherwise, take $c>c(\rho)$. 
Then $(P,\rho)$ satisfies {\rm (IV$_c$)}. 
The left inequality in \eqref{eq:ENvsI-under-CIP} is \eqref{eq:dEN_le_drho}.
For the right inequality, let $r>d_{\rho\text{-EN}}(M,N)$.
Then there exists $Q\in \EN_r(M)\cap \EN_r(N)$ by definition of $d_{\rho\text{-EN}}$.
By Theorem~\ref{thm:erosion neighborhood}(2), $Q$ is $r$-height-interleaved with both $M$ and $N$.
Hence $M$ and $N$ are $(2r+c)$-height-interleaved.
Taking the infimum over such $r$ yields
\[
d_{\rho\text{-EN}}(M,N)\le d_\rho(M,N)\le 2\,d_{\rho\text{-EN}}(M,N)+c.
\]
Finally, since $c>c(\rho)$ was arbitrary, it follows that 
\[
d_{\rho\text{-EN}}(M,N)\le d_\rho(M,N)\le 2\,d_{\rho\text{-EN}}(M,N)+c(\rho),
\]
as desired.

It remains to show that $d_{\rho\text{-EN}}$ satisfies a $c$-relaxed triangle inequality for every $c>c(\rho)$.
Take $Q_1\in \EN_r(M)\cap \EN_r(X)$ and $Q_2\in \EN_s(X) \cap \EN_s(N)$.
By Lemma~\ref{lem:EN-triangle-inequality}, there exists $Q_3\in \EN_{s}(Q_1)\cap \EN_{r}(Q_2)$.
Since $Q_1\in \EN_r(M)$ and $Q_2\in \EN_s(N)$, Lemma~\ref{lem:EN-composition} yields
$Q_3\in \EN_{s+r+c}(M)\cap \EN_{s+r+c}(N)$.
This finishes the proof.
\end{proof}

In the rest of this section, we prove Lemmas~\ref{lem:char-EN}--\ref{lem:EN-triangle-inequality}. 

\subsection{Proof of Lemma \ref{lem:char-EN}}
\label{sec:EN-proof1}

For given $f,g$, set
\[
\tilde f:= {\footnotesize \begin{bmatrix} \eta_{r,M}^{\L} & f \end{bmatrix}}
\colon \L_rM\oplus X\to M,
\qquad
\tilde g:= {\footnotesize \begin{bmatrix} \eta_{r,M}^{\R} \\ g \end{bmatrix}}
\colon M\to \R_rM\oplus Y.
\]
In the abelian category $\Rep_k(P)$ we have an isomorphism
\begin{equation}\label{eq:tilde_gf}
\im(\tilde g\circ \tilde f)\ \cong\ \im \tilde f\big/(\im \tilde f \cap \ker \tilde g).
\end{equation}

Suppose that $N\cong M_1/M_2$ for submodules $M_2\subseteq M_1\subseteq M$ with
$\Im_rM\subseteq M_1$ and $M_2\subseteq \Ker_rM$.
Take $X:=M_1$ and let $f\colon X=M_1\hookrightarrow M$ be the inclusion.
Take $Y:=M/M_2$ and let $g\colon M\twoheadrightarrow M/M_2$ be the quotient map.
Then $\im \tilde f=\im(\eta_{r,M}^{\L})+\im(f)=\Im_rM+M_1=M_1$.
Moreover,
\[
\ker \tilde g=\ker(\eta_{r,M}^{\R})\cap \ker(g)=\Ker_rM\cap M_2=M_2,
\]
since $M_2\subseteq \Ker_rM$ by assumption.
Thus \eqref{eq:tilde_gf} gives
\[
\im(\tilde g\circ \tilde f)\ \cong\ M_1/(M_1\cap M_2)=M_1/M_2\ \cong\ N,
\]
which is \eqref{eq:char_EN}.

Assume \eqref{eq:char_EN}.
Set $M_1:=\im \tilde f\subseteq M$ and $M_2:=M_1\cap \ker \tilde g\subseteq M_1$.
Then \eqref{eq:tilde_gf} yields $N\cong M_1/M_2$.
Moreover, $\Im_rM=\im(\eta_{r,M}^{\L})\subseteq \im \tilde f=M_1$.
Also $M_2\subseteq \ker \tilde g\subseteq \ker(\eta_{r,M}^{\R})=\Ker_rM$. 
Therefore $N\in \EN_r(M)$.

\subsection{Proof of Lemma~\ref{lem:EN-composition}}
\label{sec:EN-proof2}
Assume that $(P,\rho)$ has CIP and satisfies {\rm (IV$_c$)} for some $c\ge 0$.
Let $s,r\ge 0$. We first claim that $\Im_{s+r+c}\subseteq \Im_s\Im_r$ as a subfunctor.
We denote the natural transformation \eqref{eq:sigmaL_under_CIP} by
$\sigma_{s,r}^{\L}\colon \L_{s+r+c}\Rightarrow \L_s\L_r$.
Let $M\in \Rep_k(P)$.
By the factorization property \eqref{eq:sigmaL-factor} of $\sigma^{\L}_{s,r}$, the structure map
$\eta^{\L}_{s+r+c,M}\colon \L_{s+r+c}M\to M$ factors as
\[
\L_{s+r+c}M \xrightarrow{\sigma^{\L}_{s,r,M}} \L_s\L_rM
\xrightarrow{\mu^{\L}_{s,r,M}} \L_{s+r}M
\xrightarrow{\eta^{\L}_{s+r,M}} M .
\]
On the other hand, by \eqref{eq:ImIm_to_Im},
the image of the composite
$\eta^{\L}_{s+r,M}\circ \mu^{\L}_{s,r,M}\colon \L_s\L_rM\to M$
identifies with $\Im_s(\Im_rM)\subseteq M$.
Therefore,
\begin{equation}\label{eq:sub_s+r+c_sr}
\Im_{s+r+c}M=\im(\eta^{\L}_{s+r+c,M})\subseteq \Im_s(\Im_rM).
\end{equation}
Naturality in $M$ yields the claimed inclusion of subfunctors.

Now, we prove the claim for fixed $M,N,Q$.
Let $N\cong M_1/M_2$ with $\Im_rM\subseteq M_1$ and $M_2\subseteq \Ker_rM$,
and let $Q\cong N_1/N_2$ with $\Im_sN\subseteq N_1$ and $N_2\subseteq \Ker_sN$.
Let $f\colon M_1\twoheadrightarrow N\cong M_1/M_2$ be the canonical surjection, and set
$M_1':=f^{-1}(N_1)$ and $M_2':=f^{-1}(N_2)$.
Then $Q\cong N_1/N_2\cong M_1'/M_2'$.
To prove $Q\in \EN_{s+r+c}(M)$, it remains to show that $\Im_{s+r+c}M\subseteq M_1'$ and $M_2'\subseteq \Ker_{s+r+c}M$.

We first show that $\Im_{s+r+c}M\subseteq M_1'$.
Since $\Im_rM\subseteq M_1$ by assumption, functoriality of $\Im_s$ yields
\[
\Im_s(\Im_rM)\subseteq \Im_s(M_1).
\]
On the other hand, from $\Im_sN\subseteq N_1$ we obtain $\Im_s(M_1)\subseteq M_1'$:
indeed, naturality of $\eta^{\L}_{s}$ gives
$f(\Im_s(M_1))\subseteq \Im_s(N)\subseteq N_1$, hence $\Im_s(M_1)\subseteq f^{-1}(N_1)=M_1'$.
Combining it with \eqref{eq:sub_s+r+c_sr}, we conclude $\Im_{s+r+c}M \subseteq M_1'$.

Next, we show that $M_2' \subseteq \Ker_{s+r+c}M$.
With $f$ and $M_2'$ as above, the restriction $f':=f|_{M_2'}\colon M_2'\twoheadrightarrow N_2$ is surjective.
Since $N_2\subseteq \Ker_s(N)$, we have $\eta^{\R}_{s,N_2}=0$, hence $\eta^{\L}_{s,N_2}=0$ by Proposition~\ref{prop:KerIm}(5).

Applying $\L_s$ to $f'$ yields an epimorphism $\L_s(f')\colon \L_s(M_2')\twoheadrightarrow \L_s(N_2)$, and naturality gives
\[
f'\circ \eta^{\L}_{s,M_2'}
=
\eta^{\L}_{s,N_2}\circ \L_s(f')
=0.
\]
In particular, $\eta^{\L}_{s,M_2'}$ has image $\Im_s(M_2')$, so
\[
\Im_s(M_2')=\im(\eta^{\L}_{s,M_2'})\subseteq \ker(f')=\ker(f)=M_2.
\]

Since $M_2\subseteq \Ker_r(M)$, it follows that $\eta^{\R}_{r,M_2'}$ vanishes on $\Im_s(M_2')$, and hence
\[
\eta^{\R}_{r,M_2'}\circ \eta^{\L}_{s,M_2'}\colon \L_s(M_2')\longrightarrow \R_r(M_2')
\]
is zero.
Under $\L_s\dashv \R_s$, the right adjoint of this morphism is
\[
M_2' \xrightarrow{\eta^{\R}_{r,M_2'}} \R_r(M_2')
\xrightarrow{\eta^{\R}_{s,\R_r(M_2')}} \R_s\R_r(M_2'),
\]
so this composite is zero.
Composing with $\sigma^{\R}_{s,r,M_2'}\colon \R_s\R_r(M_2')\to \R_{s+r+c}(M_2')$ and using
the factorization \eqref{eq:sigmaR-factor}, we obtain $\eta^{\R}_{s+r+c,M_2'}=0$.
Thus $M_2'\subseteq \Ker_{s+r+c}(M_2')\subseteq \Ker_{s+r+c}(M)$. 
This completes the proof.

\subsection{Proof of Lemma~\ref{lem:EN-triangle-inequality}}
\label{sec:EN-proof3}
Take $Q_1\in \EN_r(M)\cap \EN_r(X)$ and $Q_2\in \EN_s(X)\cap \EN_s(N)$.
We may choose submodules
\[
X_1'\subseteq X_1\subseteq X
\qand
X_2'\subseteq X_2\subseteq X
\]
such that
\[
Q_1\cong X_1/X_1',
\qquad
\Im_rX\subseteq X_1,
\qquad
X_1'\subseteq \Ker_rX,
\]
and
\[
Q_2\cong X_2/X_2',
\qquad
\Im_sX\subseteq X_2,
\qquad
X_2'\subseteq \Ker_sX.
\]

Set
\[
X_3:=X_1\cap X_2
\qand
X_3':=(X_1'+X_2')\cap X_3\subseteq X_3,
\qquad
Q_3:=X_3/X_3'.
\]
We will show that $Q_3\in \EN_s(Q_1)$.

Let $\pi_1\colon X_1\twoheadrightarrow Q_1:=X_1/X_1'$ be the canonical epimorphism.
Since $X_1'\cap X_3\subseteq X_3'$, we obtain 
\[
Q_3=X_3/X_3'\ \cong\ (X_3+X_1')/(X_3'+X_1')\ =\ \pi_1(X_3)/\pi_1(X_3').
\]
Thus, to prove $Q_3\in \EN_s(Q_1)$ it remains to show that
\[
\Im_s(Q_1)\subseteq \pi_1(X_3)
\qand
\pi_1(X_3')\subseteq \Ker_s(Q_1).
\]

By naturality of $\eta^{\L}_s$, we have
\[
\pi_1\circ \eta^{\L}_{s,X_1}
=
\eta^{\L}_{s,Q_1}\circ \L_s(\pi_1).
\]
Since $\L_s$ preserves epimorphisms, $\L_s(\pi_1)$ is an epimorphism, and hence
\[
\Im_s(Q_1)
=\im(\eta^{\L}_{s,Q_1})
=\im\big(\eta^{\L}_{s,Q_1}\circ \L_s(\pi_1)\big)
=\im\big(\pi_1\circ \eta^{\L}_{s,X_1}\big)
=\pi_1\big(\Im_s(X_1)\big).
\]
Moreover, since $\Im_s$ preserves monomorphisms, 
we obtain $\Im_s(X_1)\subseteq \Im_s(X)\subseteq X_2$.
Hence, 
\[
\Im_s(X_1)\subseteq X_1\cap \Im_s(X)\subseteq X_1\cap X_2=X_3,
\]
and therefore $\Im_s(Q_1)=\pi_1(\Im_s(X_1))\subseteq \pi_1(X_3)$.

On the other hand, by naturality of $\eta^{\R}_s$ we have
\begin{equation}
\eta^{\R}_{s,Q_1}\circ \pi_1=\R_s(\pi_1)\circ \eta^{\R}_{s,X_1}.
\end{equation}
Therefore,
\begin{equation}\label{eq:pi1-Ker}
\pi_1\big(\Ker_s(X_1)\big)\subseteq \Ker_s(Q_1).
\end{equation}

Next, since $X_3\subseteq X_1$ and $X_1'\subseteq X_1$, we obtain
\begin{equation}
X_3'=(X_1'+X_2')\cap X_3
\subseteq (X_1'+X_2')\cap X_1
\subseteq X_1'+(X_2'\cap X_1).
\end{equation}

Moreover, since $X_2'\subseteq \Ker_s(X)$ and $\Ker_s$ preserves monomorphisms, we have
\begin{equation}
X_2'\cap X_1\subseteq \Ker_s(X)\cap X_1 = \Ker_s(X_1).
\end{equation}
Combining these yields
\begin{equation}\label{eq:X3prime-in-X1prime+Ker}
X_3'\subseteq X_1'+\Ker_s(X_1).
\end{equation}
Applying $\pi_1$ to \eqref{eq:X3prime-in-X1prime+Ker} and using \eqref{eq:pi1-Ker}, we conclude that
\[
\pi_1(X_3')
\subseteq \pi_1(X_1')+\pi_1(\Ker_s(X_1))
=0+\pi_1(\Ker_s(X_1))
\subseteq \Ker_s(Q_1),
\]
as desired. 

We have shown that $Q_3\in \EN_s(Q_1)$.
By the same argument with the roles of $r$ and $s$ interchanged
we also obtain $Q_3\in \EN_r(Q_2)$.

\section{Discussion and perspective}
\label{sec:discussion}

%\medskip
\noindent\emph{Further links to classical interleavings.}
By Proposition~\ref{prop:recoverRd}, our height-interleaving distance recovers the classical interleaving distance on $\Fun(\mathbb{R}^d,\mathcal{C})$.
It is natural to ask to what extent familiar results from classical, translation-based interleaving theory can be formulated in our setting, as a direction for future work.
Our results in Section~\ref{sec:PM} provide support in this direction. 
Moreover, since our construction of $r$-latching/matching functors is inspired by ideas from model category theory, it may also be interesting to explore related directions, for instance those involving homotopy interleavings.

\medskip\noindent\emph{Representation-theoretic viewpoint.}
For finite $P$, the functor category $\Fun(P,\mathcal{C})$ can be identified with the module category of the associated incidence algebra. 
In this setting, our distance $d_{\rho}$ may provide quantitative invariants for analyzing module categories, and one may ask for a representation-theoretic interpretation of these invariants.

\medskip\noindent\emph{Applications to zigzag persistence and related settings.}
Our framework has the potential to be useful for the quantitative study of persistence modules indexed by zigzag and, more generally, by diamond-free posets.
In continuous settings, for suitable choices of $\rho$, the resulting height-interleaving distance defines an extended pseudo-distance; for more general diamond-free posets, including discrete or finite ones, the same construction provides a distance-like comparison up to an additive defect.
Developing effective computational methods and establishing stability results for data-driven persistent homology constructions are natural directions for further work.

\appendix
\section{Mates correspondence for adjunctions}\label{app:mates}

For basic notions and terminology in category theory, we refer the reader to \cite{MacLane}. 

\subsection{The basic mates correspondence}\label{appendix:mates}
Let $\mathcal{A},\mathcal{B}$ be categories and let
\[
L:\mathcal{A}\rightleftarrows \mathcal{B}:R
\]
be an adjoint pair, written $L\dashv R$.
We denote by $\eta\colon\id_{\mathcal{A}}\Rightarrow RL$ the unit and by
$\varepsilon\colon LR\Rightarrow \id_{\mathcal{B}}$ the counit.
For any $A\in\mathcal{A}$ and $B\in\mathcal{B}$, we write
\[
(-)^{\flat}:\Hom_{\mathcal{B}}(L(A),B) \cong \Hom_{\mathcal{A}}(A,R(B)) :(-)^{\sharp}
\]
for the adjunction bijection.

Let $F:\mathcal{X}\to\mathcal{A}$ and $G:\mathcal{X}\to\mathcal{B}$ be functors.
There is a canonical bijection
\begin{equation}\label{eq:mates-bijection-basic}
\mathrm{Nat}(LF,G)\ \cong\ \mathrm{Nat}(F,RG),
\end{equation}
natural in $F$ and $G$.
Given $\alpha:LF\Rightarrow G$, its \emph{right mate} $\alpha^{\flat}:F\Rightarrow RG$
is defined componentwise by
\begin{equation}\label{eq:mate-right-formula}
(\alpha^{\flat})_x
:= R(\alpha_x)\circ \eta_{F(x)}
\quad (x\in\mathcal{X}).
\end{equation}
Conversely, given $\beta:F\Rightarrow RG$, its \emph{left mate} $\beta^{\sharp}:LF\Rightarrow G$
is defined componentwise by
\begin{equation}\label{eq:mate-left-formula}
(\beta^{\sharp})_x
:= \varepsilon_{G(x)}\circ L(\beta_x)
\quad (x\in\mathcal{X}).
\end{equation}
These assignments are mutually inverse, hence yield \eqref{eq:mates-bijection-basic}.

The mates correspondence satisfies the following standard compatibilities.

\begin{lemma}\label{lem:mates-properties}
Under the bijection \eqref{eq:mates-bijection-basic}:
\begin{enumerate}[\rm (1)]
\item %\textbf{Naturality.}
Let $\alpha:LF\Rightarrow G$ and $\beta:F\Rightarrow RG$ be natural transformations. 
For any $\theta:F'\Rightarrow F$ and $\psi:G\Rightarrow G'$, we have
\[
(\psi\circ \alpha)^{\flat}=R\psi\circ \alpha^{\flat},
\qquad
(\alpha\circ L\theta)^{\flat}=\alpha^{\flat}\circ \theta,
\]
and
\[
(R\psi\circ \beta)^{\sharp}=\psi\circ \beta^{\sharp},
\qquad
(\beta\circ \theta)^{\sharp}=\beta^{\sharp}\circ L\theta.
\]

\item %\textbf{Units and counits.}
The right mate of $\id_{LF}:LF\Rightarrow LF$ is the unit
$\eta_F:F\Rightarrow RLF$.
The left mate of $\id_{RG}:RG\Rightarrow RG$ is the counit
$\varepsilon_G:LRG\Rightarrow G$.
In particular, the triangle identities for $(\eta,\varepsilon)$ are precisely the statements that
$(-)^{\flat}$ and $(-)^{\sharp}$ are mutually inverse.

\item %\textbf{Isomorphisms.}
A natural transformation is an isomorphism if and only if its mate is an isomorphism.
\end{enumerate}
\end{lemma}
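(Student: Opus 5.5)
The statement is Lemma~\ref{lem:mates-properties}, the standard properties of the mates bijection \eqref{eq:mates-bijection-basic}. The plan is to verify each item directly from the formulas \eqref{eq:mate-right-formula} and \eqref{eq:mate-left-formula}. The only tools needed are naturality of $\eta$ and $\varepsilon$, functoriality of $L$ and $R$, and the triangle identities
\[
\varepsilon_L\circ L\eta=\id_L
\qquad\text{and}\qquad
R\varepsilon\circ\eta_R=\id_R.
\]

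\textbf{Item (1).} Everything is checked componentwise at $x\in\mathcal{X}$.
\begin{itemize}
\item For $(\psi\circ\alpha)^{\flat}$, the component is $R(\psi_x)\circ R(\alpha_x)\circ\eta_{F(x)}$, which is $(R\psi\circ\alpha^{\flat})_x$ by functoriality of $R$.
\item For $(\alpha\circ L\theta)^{\flat}$, the component is $R(\alpha_x)\circ RL(\theta_x)\circ\eta_{F'(x)}$. Naturality of $\eta$ gives $RL(\theta_x)\circ\eta_{F'(x)}=\eta_{F(x)}\circ\theta_x$, so the component equals $(\alpha^{\flat}\circ\theta)_x$.
\item The two identities for $(-)^{\sharp}$ are dual: use functoriality of $L$ for the $\theta$-identity, and naturality of $\varepsilon$, namely $\varepsilon_{G'(x)}\circ LR(\psi_x)=\psi_x\circ\varepsilon_{G(x)}$, for the $\psi$-identity.
\end{itemize}

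\textbf{Item (2).} The right mate of $\id_{LF}$ has components $R(\id)\circ\eta_{F(x)}=\eta_{F(x)}$, and dually the left mate of $\id_{RG}$ is $\varepsilon_G$. For the remark about mutual inverses, compute
\[
(\alpha^{\flat})^{\sharp}_x=\varepsilon_{G(x)}\circ LR(\alpha_x)\circ L(\eta_{F(x)})=\alpha_x\circ\varepsilon_{LF(x)}\circ L\eta_{F(x)},
\]
using naturality of $\varepsilon$. This equals $\alpha_x$ exactly because of the first triangle identity. Dually, $(\beta^{\sharp})^{\flat}=\beta$ is equivalent to the second triangle identity. Conversely, if the two assignments are mutually inverse, apply them to $\alpha=\id_{LF}$ and $\beta=\id_{RG}$ (with $F=\id$ and $G=\id$) to recover the triangle identities.

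\textbf{Item (3).} This is the one point needing care, because "mate" here must be read so that the two transformations have comparable domain and codomain. I would read it in the two-adjunction setting of Appendix~\ref{appendix:two-adjunction-mates}, where $\mathrm{Nat}(L',L)\cong\mathrm{Nat}(R,R')$ for $L\dashv R$ and $L'\dashv R'$, and the mate is $R'\varepsilon\circ R'\alpha R\circ\eta'R$. The plan is to use that mate formation is functorial for pasting, in the contravariant sense $(\beta\circ\alpha)^{\mathrm{mate}}=\alpha^{\mathrm{mate}}\circ\beta^{\mathrm{mate}}$. This follows from the triangle identities of the middle adjunction, and identities go to identities by item (2). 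If $\alpha$ has inverse $\alpha^{-1}$, then $\alpha^{-1}$ is itself a transformation in the opposite direction between the same adjunctions, so its mate is defined. By functoriality, the mates of $\alpha$ and $\alpha^{-1}$ are then mutually inverse. The converse follows by symmetry, since the correspondence is a bijection.

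The main obstacle is bookkeeping: the functoriality-under-pasting step in item (3). I would handle it with a single string-diagram or whiskering computation that cancels $\varepsilon\circ L\eta$ in the middle.
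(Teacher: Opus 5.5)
The paper gives no proof of Lemma~\ref{lem:mates-properties}; it is recorded as a list of standard facts, so there is no argument of the paper's to compare with. Your componentwise verification of (1) and (2) is the standard argument and is correct. It works from \eqref{eq:mate-right-formula} and \eqref{eq:mate-left-formula} using only functoriality of $L$ and $R$, naturality of $\eta$ and $\varepsilon$, and the triangle identities. It also covers both directions of the ``precisely'' clause in (2): specialize to $F=\id_{\mathcal A}$, $G=L$, $\alpha=\id_L$, and to $F=R$, $G=\id_{\mathcal B}$, $\beta=\id_R$.

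For (3) you are right to reinterpret, but the reason is stronger than ``bookkeeping''. Read literally under the one-adjunction bijection \eqref{eq:mates-bijection-basic}, item (3) is false, and item (2) itself supplies the counterexample.
\begin{itemize}
\item Take $\mathcal X=\mathcal A$, $F=\id_{\mathcal A}$, $G=L$. Then the isomorphism $\id_L\colon L\Rightarrow L$ has right mate the unit $\eta\colon \id_{\mathcal A}\Rightarrow RL$.
\item The unit is invertible if and only if $L$ is fully faithful, which fails in general.
\item It already fails for $\L_\epsilon\dashv\R_\epsilon$ in Example~(2) of Subsection~\ref{sec:examples}. Since $d^{\uparrow_\epsilon}=\emptyset$, the unit component $M(d)=k\to(\R_\epsilon\L_\epsilon M)(d)=0$ is not invertible.
\end{itemize}

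So (3) holds only in the two-adjunction form of Corollary~\ref{cor:mates-two-adj}. That is also the form the paper actually uses: $\eta^{\L}_{s,r}$ versus $\eta^{\R}_{r,s}$, and $\sigma^{\L}_{s,r}$ versus $\sigma^{\R}_{r,s}$. In that form your argument is complete.
\begin{itemize}
\item Your formula $R'\varepsilon\circ R'\alpha R\circ \eta' R$ is the correct mate. It is $\gamma\mapsto R'\varepsilon\circ\gamma R$ applied to $\gamma=R'\alpha\circ\eta'$. This second step comes from precomposition along $L\dashv R$, not literally from \eqref{eq:mates-bijection-basic} with $F=R$, $G=\id_{\mathcal B}$ as the appendix's derivation suggests; that instance only gives $\mathrm{Nat}(LR,\id_{\mathcal B})\cong\mathrm{Nat}(R,R)$.
\item Contravariant compatibility with vertical composition follows by cancelling the first triangle identity of the middle adjunction, as you say.
\item $\id_L$ goes to $R\varepsilon\circ\eta R=\id_R$.
\item The converse follows from bijectivity.
\end{itemize}

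A shorter route avoids the three-adjunction computation and uses Yoneda. Under the two adjunctions, precomposition with $\alpha_A$, namely $\Hom(LA,B)\to\Hom(L'A,B)$, corresponds to postcomposition with the $B$-component of the mate, namely $\Hom(A,RB)\to\Hom(A,R'B)$. Hence $\alpha$ is invertible if and only if all these maps are bijections, if and only if the mate is invertible.
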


\subsection{A two-adjunction variant}
\label{appendix:two-adjunction-mates}

We will also need a variant of \eqref{eq:mates-bijection-basic} relating two adjunctions.
Let $L\dashv R$ and $L'\dashv R'$ be adjunctions between categories $\mathcal A$ and $\mathcal B$.
Applying \eqref{eq:mates-bijection-basic} with $\mathcal X=\mathcal A$, $F=\id_{\mathcal A}$ and $G=L$ for the adjunction $L'\dashv R'$
identifies
\[
\mathrm{Nat}(L',L)\ \cong\ \mathrm{Nat}(\id_{\mathcal A},R'L).
\]
On the other hand, applying \eqref{eq:mates-bijection-basic} with $\mathcal X=\mathcal B$, $F=R$ and $G=\id_{\mathcal B}$ for the adjunction $L\dashv R$
identifies
\[
\mathrm{Nat}(\id_{\mathcal A},R'L)\ \cong\ \mathrm{Nat}(R,R').
\]
Composing these canonical bijections yields the following correspondence.

\begin{corollary}\label{cor:mates-two-adj}
Let $L\dashv R$ and $L'\dashv R'$ be adjunctions between $\mathcal A$ and $\mathcal B$.
Then there is a canonical bijection
\[
\mathrm{Nat}(L',L)\ \cong\ \mathrm{Nat}(R,R'),
\]
natural in $L,L',R,R'$.
\end{corollary}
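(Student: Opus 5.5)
The plan is to prove the bijection as a composite of two explicit bijections, each checked by the triangle identities, rather than appeal loosely to \eqref{eq:mates-bijection-basic}. Write $(\eta,\varepsilon)$ for the unit and counit of $L\dashv R$, and $(\eta',\varepsilon')$ for those of $L'\dashv R'$.

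\emph{First step.} Apply \eqref{eq:mates-bijection-basic} to the adjunction $L'\dashv R'$ with $\mathcal X=\mathcal A$, $F=\id_{\mathcal A}$ and $G=L$. This gives
\[
\mathrm{Nat}(L',L)\cong\mathrm{Nat}(\id_{\mathcal A},R'L),\qquad \alpha\mapsto R'\alpha\circ\eta'.
\]
The inverse is $\beta\mapsto\varepsilon'_L\circ L'\beta$. This step needs nothing beyond the formulas \eqref{eq:mate-right-formula} and \eqref{eq:mate-left-formula}.

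\emph{Second step.} Construct a bijection
\[
\mathrm{Nat}(\id_{\mathcal A},R'L)\cong\mathrm{Nat}(R,R')
\]
directly from $L\dashv R$. For $\beta\colon\id_{\mathcal A}\Rightarrow R'L$, set
\[
\Phi(\beta):=R'\varepsilon\circ\beta_R\colon R\Rightarrow R'LR\Rightarrow R'.
\]
For $\gamma\colon R\Rightarrow R'$, set
\[
\Psi(\gamma):=\gamma_L\circ\eta\colon\id_{\mathcal A}\Rightarrow RL\Rightarrow R'L.
\]
I will check $\Phi\Psi=\id$ and $\Psi\Phi=\id$.

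For $\Phi\Psi(\gamma)=R'\varepsilon\circ\gamma_{LR}\circ\eta_R$, naturality of $\gamma$ with respect to $\varepsilon$ rewrites $R'\varepsilon\circ\gamma_{LR}$ as $\gamma\circ R\varepsilon$. The triangle identity $R\varepsilon\circ\eta_R=\id_R$ then yields $\gamma$.

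Symmetrically, $\Psi\Phi(\beta)=R'\varepsilon_L\circ\beta_{RL}\circ\eta$. Naturality of $\beta$ with respect to $\eta$ gives $\beta_{RL}\circ\eta=R'L\eta\circ\beta$. The triangle identity $\varepsilon_L\circ L\eta=\id_L$ then gives back $\beta$.

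\emph{Composite and naturality.} Composing the two steps gives the canonical bijection, which sends $\alpha\colon L'\Rightarrow L$ to
\[
R'\varepsilon\circ R'\alpha_R\circ\eta'_R\colon R\Rightarrow R'.
\]
This is the usual mate formula. Naturality in $L,L',R,R'$, meaning compatibility with pre- and post-composition by natural transformations and with the induced maps between adjunctions, follows from Lemma~\ref{lem:mates-properties}(1) for the first step. For the second step it follows by inspecting the explicit formulas for $\Phi$ and $\Psi$, which are built only from whiskering and composition.

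The only step requiring care is the second one: \eqref{eq:mates-bijection-basic} does not literally apply there, since its domain $\mathcal X$ changes from $\mathcal A$ to $\mathcal B$. So the inverse pair $\Phi,\Psi$ must be verified by hand, and the crux is choosing the correct naturality square in each of the two computations above.
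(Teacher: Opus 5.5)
Your proof is correct and uses the same factorization as the paper, $\mathrm{Nat}(L',L)\cong\mathrm{Nat}(\id_{\mathcal A},R'L)\cong\mathrm{Nat}(R,R')$, with an identical first step. Your hand verification of the second step is actually needed. The paper justifies that step by applying \eqref{eq:mates-bijection-basic} to $L\dashv R$ with $\mathcal X=\mathcal B$, $F=R$, $G=\id_{\mathcal B}$, but that instance literally gives $\mathrm{Nat}(LR,\id_{\mathcal B})\cong\mathrm{Nat}(R,R)$. As you observe, the one-sided bijection never changes the domain category, so what is really required is the other-sided version $\mathrm{Nat}(H,KL)\cong\mathrm{Nat}(HR,K)$ with $H=\id_{\mathcal A}$, $K=R'$. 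That is exactly what your $\Phi,\Psi$ and the two triangle-identity computations establish.
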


\section{Limits, colimits, and finality for posets
}

\subsection{Limits and colimits}\label{appendix:limits-colimits}

Let $\mathcal C$ be a category. A diagram of shape $J$ in $\mathcal{C}$ is a functor $M\colon \mathcal J\to \mathcal C$.

A \emph{cone} to $M$ is a pair $(X,\gamma)$ consisting of an object $X\in\mathcal C$ and a family of morphisms
\[
\gamma_j\colon X\to M(j)\qquad (j\in\mathcal J)
\]
such that for every morphism $u\colon j\to j'$ in $\mathcal J$ we have
\[
M(u)\circ \gamma_j=\gamma_{j'}.
\]

A \emph{limit} of $M$ is an object $\lim M\in\mathcal C$ equipped with a cone $(\lim M,\beta_M)$,
\[
\beta_M(j)\colon \lim M\to M(j)\qquad (j\in\mathcal J),
\]
satisfying the following universal property:
for every cone $(X,\gamma)$ to $M$, there exists a unique morphism
$h\colon X\to \lim M$ such that $\beta_M(j)\circ h=\gamma_j$ for all $j\in\mathcal J$.

Dually, a \emph{cocone} from $M$ is a pair $(X,\delta)$ consisting of an object $X\in\mathcal C$ and a family of morphisms
\[
\delta_j\colon M(j)\to X\qquad (j\in\mathcal J)
\]
such that for every morphism $u\colon j\to j'$ in $\mathcal J$ we have
\[
\delta_{j'}\circ M(u)=\delta_j.
\]

A \emph{colimit} of $M$ is an object $\colim M\in\mathcal C$ equipped with a cocone $(\colim M,\alpha_M)$,
\[
\alpha_M(j)\colon M(j)\to \colim M\qquad (j\in\mathcal J),
\]
satisfying the following universal property:
for every cocone $(X,\delta)$ from $M$, there exists a unique morphism
$k\colon \colim M\to X$ such that $k\circ \alpha_M(j)=\delta_j$ for all $j\in\mathcal J$.

Write $\Fun(\mathcal J,\mathcal C)$ for the functor category and
$\Delta\colon \mathcal C\to \Fun(\mathcal J,\mathcal C)$ for the constant diagram functor.
If $\mathcal C$ is complete and cocomplete, then the assignments
$M\mapsto \lim M$ and $M\mapsto \colim M$ define functors
\[
\lim\colon \Fun(\mathcal J,\mathcal C)\to \mathcal C,
\qquad
\colim\colon \Fun(\mathcal J,\mathcal C)\to \mathcal C,
\]
and we have adjunctions
\[
\colim \dashv \Delta \dashv \lim.
\]
We denote by
$\alpha\colon \id \Rightarrow \Delta(\colim(-))$ the unit of $\colim\dashv \Delta$ and by
$\beta\colon \Delta(\lim(-)) \Rightarrow \id$ the counit of $\Delta\dashv \lim$.
Thus, for each diagram $M$ we have components
\[
\alpha_M(j)\colon M(j)\to \colim M,
\qquad
\beta_M(j)\colon \lim M\to M(j)
\qquad (j\in\mathcal J),
\]
which are the chosen colimiting cocone and limiting cone, respectively.

If $\mathcal I$ is a full subcategory of $\mathcal J$, we write $M|_{\mathcal I}$
for the restriction of $M$ along the inclusion $\mathcal I\hookrightarrow \mathcal J$, and we use the notations
\[
\lim M|_{\mathcal I},\ \ \colim M|_{\mathcal I}
\qquad\text{or equivalently}\qquad
\lim_{i\in\mathcal I} M(i),\ \ \colim_{i\in\mathcal I} M(i).
\]
In this case, the restriction of the limiting cone $(\lim M,\beta_M)$ to $\mathcal I$ is a cone to $M|_{\mathcal I}$,
hence induces a canonical morphism
\[
\lim M\longrightarrow \lim M|_{\mathcal I}.
\]
Dually, the colimiting cocone $(\colim M|_{\mathcal I},\alpha_{M|_{\mathcal I}})$ induces a canonical morphism
\[
\colim M|_{\mathcal I}\longrightarrow \colim M.
\]

\subsection{Iterated colimits and the Grothendieck construction for posets}\label{appendix:iterated-colimits}
In this section, we recall the Grothendieck construction (see, e.g., \cite{MacLane}) in the poset case, which provides the indexing poset for our comparison between iterated colimits and a single colimit. 

Let $I$ be a poset and let $F\colon I\to {\rm Poset}$ be a functor from $I$ to the category {\rm Poset} of posets, namely, for each $x\in I$ we have a poset $F(x)$, and for each $x\leq y$ an order-preserving map 
$F(x\leq y)\colon F(x)\to F(y)$.

We define a poset $\int_I F$ as follows:
its elements are pairs $(x,q)$ with $x\in I$ and $q\in F(x)$, and we set
\[
(x,q)\le (y,u)
\quad\Longleftrightarrow\quad
\text{$x\leq y$ in $I$ and $F(x\le y)(q)\le u$ in $F(y)$.}
\]
The projection $\int_I F\to I$, $(x,q)\mapsto x$, is order-preserving.

Now let $\mathcal C$ be a cocomplete category and let $D\colon \int_I F\to \mathcal C$ be a functor.
For each $x\in I$, we write
\[
C_x:=\colim_{q\in F(x)} D(x,q).
\]
For $x\le y$, the assignment $q\mapsto (x,q)\le\big(y,F(x\le y)(q)\big)$ in $\int_I F$
induces a natural transformation
\[
D(x,-)\Longrightarrow D\big(y,F(x\le y)(-)\big),
\]
and hence a canonical morphism $C_x\to C_y$ by the universal property of the colimit.
Thus $x\mapsto C_x$ defines a functor $I\to \mathcal C$, and we may form the iterated colimit
\[
\colim_{x\in I} C_x = \colim_{x\in I} \colim_{q\in F(x)} D(x,q).
\]

On the other hand, for each $(x,q)\in \int_I F$ the structure morphism
$D(x,q)\to \colim_{(y,u)\in \int_I F} D(y,u)$
forms a cocone over the diagram $q\mapsto D(x,q)$ indexed by $F(x)$.
Hence, by the universal property of $C_x=\colim_{q\in F(x)} D(x,q)$,
they induce a canonical morphism
\[
C_x\longrightarrow \colim_{(y,u)\in \int_I F} D(y,u).
\]
These morphisms are compatible with $x\leq y$, hence induce a canonical morphism
\begin{equation}\label{eq:iterated-colim-to-grothendieck}
\colim_{x\in I}\ \colim_{q\in F(x)} D(x,q)
\longrightarrow
\colim_{(y,u)\in \int_I F} D(y,u).
\end{equation}

The following result is a standard consequence of the general Grothendieck construction for colimits.

\begin{proposition}\label{prop:iterated-colim-grothendieck}
The canonical morphism \eqref{eq:iterated-colim-to-grothendieck} is an isomorphism,
natural in $D$.
\end{proposition}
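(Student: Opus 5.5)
We prove Proposition~\ref{prop:iterated-colim-grothendieck} by showing directly that $\colim_{x\in I} C_x$, equipped with the composite maps
\[
D(x,q)\xrightarrow{\ \alpha\ } C_x \longrightarrow \colim_{x\in I} C_x ,
\]
has the universal property of $\colim_{\int_I F} D$. Naturality in $D$ then follows formally from the uniqueness clauses of the universal properties.

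\textbf{Step 1: cocones correspond.} Let $X\in\mathcal C$. We identify cocones from $D$ to $X$ (indexed by $\int_I F$) with cocones from $x\mapsto C_x$ to $X$ (indexed by $I$).

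Given a cocone $(\delta_{(x,q)})$ from $D$, restrict it to each fibre $\{x\}\times F(x)$. This is a cocone over $q\mapsto D(x,q)$, because $(x,q)\le(x,q')$ whenever $q\le q'$ in $F(x)$. It therefore induces a unique $\bar\delta_x\colon C_x\to X$.

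We check that $\bar\delta_y\circ(C_x\to C_y)=\bar\delta_x$ for $x\le y$. It suffices to test this after precomposing with each $D(x,q)\to C_x$. By construction, the map $C_x\to C_y$ sends the summand $D(x,q)$ through
\[
D\big((x,q)\le (y,F(x\le y)(q))\big)
\]
into the summand $D(y,F(x\le y)(q))$. The cocone condition for $\delta$ along this morphism of $\int_I F$ then gives the required equality.

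Conversely, a cocone $(\gamma_x)$ on $C_\bullet$ yields the family $\gamma_x\circ\alpha_x(q)$. We check it is a cocone from $D$. A general morphism $(x,q)\le(y,u)$ factors as
\[
(x,q)\le \big(y,F(x\le y)(q)\big)\le (y,u),
\]
so it suffices to check the two types of morphism separately: fibre morphisms, and ``horizontal'' morphisms $(x,q)\le(y,F(x\le y)(q))$. The fibre case follows from the colimit cocone of $C_y$. The horizontal case follows from the definition of $C_x\to C_y$ together with the cocone condition for $\gamma$.

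\textbf{Step 2: the correspondences are inverse.} Both assignments are mutually inverse by the uniqueness parts of the universal properties of $C_x$ and of $\colim_I$. This bijection is natural in $X$, so $\colim_I C_\bullet$ represents the same functor as $\colim_{\int_I F} D$. Under this identification, the canonical morphism \eqref{eq:iterated-colim-to-grothendieck} is exactly the induced comparison map, hence an isomorphism.

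The only step requiring care is the factorization of an arbitrary morphism of $\int_I F$ into a horizontal part followed by a fibre part. Everything else is bookkeeping with universal properties.
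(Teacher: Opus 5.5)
Your proof is correct. It differs from the paper mainly in that the paper gives no argument at all: it records the proposition as a standard consequence of the general Grothendieck-construction formula for colimits.

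The usual proof of that general formula is abstract. The projection $p\colon\int_I F\to I$ is an opfibration. The inclusion of each fibre $F(x)$ into the comma category $p\downarrow x$ has a left adjoint (transport along $F(y\le x)$), and is therefore final. Hence $\Lan_p D$ is computed fibrewise as $x\mapsto C_x$, and $\colim_{\int_I F}D\cong\colim_I \Lan_p D$ because left Kan extensions compose.

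You instead verify the universal property directly, via a bijection, natural in the vertex $X$, between cocones on $D$ and cocones on $x\mapsto C_x$. Your factorization $(x,q)\le (y,F(x\le y)(q))\le (y,u)$ into a horizontal and a fibre morphism is exactly the elementary content of the cocartesian-lift structure that drives the abstract argument.

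Your route is self-contained and stays within the poset setting the paper needs. It also identifies the paper's specific map \eqref{eq:iterated-colim-to-grothendieck} as the morphism corresponding to $\id_{\colim D}$ under your bijection, so invertibility follows immediately by Yoneda. The cited general result covers arbitrary functors $I\to\mathrm{Cat}$, but leaves that identification of the canonical map implicit.

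A cosmetic point: the maps $D(x,q)\to C_x$ are colimit injections, not inclusions of summands, though your computations with them are correct.
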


\subsection{A Fubini-type finality criterion for posets}\label{appendix:fubini-posets}

Let $I$ and $P$ be posets.
Let $F\colon I \to 2^P$ be a map such that
$F(x)$ is a downset of $P$ for every $x\in I$ and that $F(x)\subseteq F(y)$ for all $x\leq y$. 
In particular, it defines a functor $F\colon I\to {\rm Poset}$.  
Set
\[
Q:=\bigcup_{x\in I} F(x)\subseteq P,
\]
viewed as a full subposet of $P$.
Then there is an order-preserving map
\[
\pi\colon \int_I F\longrightarrow Q,\qquad (x,q)\longmapsto q.
\]
Indeed, if $(x,q)\le (y,u)$ in $\int_I F$, then $x\le y$ and $F(x\le y)(q)\le u$ in $F(y)$.
Since $F(x\le y)$ is the inclusion, this means $q\le u$ in $P$, hence also in $Q$.

Let $\mathcal{C}$ be a cocomplete category and $D\colon Q\to \mathcal{C}$ a functor. 
Then, the pullback along $\pi$ induces a canonical map 
\begin{equation}\label{eq:int-to-Q}
  \colim_{(x,q)\in \int_I F} D(q)
  \longrightarrow
  \colim_{q\in Q} D(q)
\end{equation}
natural in $D$, where we use $(\pi^*D)(x,q) = D(q)$. 

\begin{proposition}\label{prop:finality-int-to-Q}
If for every $q\in Q$ the subposet
\[
I_q:=\{x\in I\mid q\in F(x)\}\subseteq I
\]
is connected, then \eqref{eq:int-to-Q} is an isomorphism. 
\end{proposition}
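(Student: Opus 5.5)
The plan is to show that $\pi\colon \int_I F\to Q$ is a final functor. Once that is established, \eqref{eq:int-to-Q} is an isomorphism for every cocomplete $\mathcal{C}$ and every $D$, by the standard finality theorem: colimits are unchanged under restriction along a final functor (see \cite{MacLane}). For a functor between posets, finality means the following: for every $q\in Q$, the comma category $q\downarrow\pi$, which here is the subposet $\pi^{-1}(q^{\uparrow}\cap Q)=\{(x,u)\in\int_I F\mid q\le u\}$, is non-empty and connected.

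\textbf{Non-emptiness.} Since $q\in Q=\bigcup_x F(x)$, there is some $x$ with $q\in F(x)$. Then $(x,q)$ lies in the comma poset.

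\textbf{Connectedness.} I will reduce this to the connectedness of $I_q$ in two steps.

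First, I connect an arbitrary element to a canonical one. Take $(x,u)$ in the comma poset, so $q\le u\in F(x)$. Because $F(x)$ is a downset, $q\in F(x)$, which means $x\in I_q$. Moreover $(x,q)\le (x,u)$ in $\int_I F$, since $F(x\le x)$ is the identity and $q\le u$. So every element of the comma poset is joined by a single relation to an element of the form $(x,q)$ with $x\in I_q$.

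Second, I connect the canonical elements to each other. If $x\le y$ in $I_q$, then $(x,q)\le (y,q)$, because $F(x\le y)$ is the inclusion and sends $q$ to $q$. Hence a zigzag $x=x_0\le x_1\ge x_2\le\cdots$ in $I_q$, which exists because $I_q$ is connected, lifts to a zigzag $(x_0,q)\le(x_1,q)\ge\cdots$ in the comma poset. Together with the first step, any two elements of the comma poset are joined by a zigzag.

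\textbf{Conclusion.} By finality, the canonical map from the colimit of $\pi^*D$ to the colimit of $D$ is an isomorphism. This map is exactly \eqref{eq:int-to-Q}.

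The main obstacle is only bookkeeping. One must check that the comparison map supplied by the general finality theorem coincides with \eqref{eq:int-to-Q}; both are induced by the cocone components $D(q)\to\colim_Q D$, so they agree by the uniqueness in the universal property. One must also fix the variance convention correctly: for colimits one needs the comma categories $q\downarrow\pi$, which are the up-sets pulled back along $\pi$, not $\pi\downarrow q$. If one prefers a self-contained argument instead of citing finality, the plan is to construct the inverse map by choosing, for each $q$, some $x\in I_q$ and sending $D(q)$ to the leg at $(x,q)$. The connectedness of $I_q$ shows this leg is independent of the choice of $x$. The downset property shows the legs form a cocone over $D$. Checking that the two composites are identities is then routine.
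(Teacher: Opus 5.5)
Your proposal is correct and follows essentially the same route as the paper: reduce to finality of $\pi$, check that $(q\downarrow\pi)$ is non-empty, use the downset property to link each $(x,u)$ to $(x,q)$ with $x\in I_q$, and then use connectedness of $I_q$. Your explicit lifting of zigzags from $I_q$ to $(q\downarrow\pi)$ spells out the step the paper compresses into ``Consequently, $(q\downarrow\pi)$ is connected if and only if $I_q$ is connected.''
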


\begin{proof}
Fix $q\in Q$. Recall that the comma category $(q\downarrow \pi)$ is the poset whose elements are pairs
$(x,p)\in \int_I F$ equipped with a morphism $q\to\pi(x,p)=p$ in $Q$.
Equivalently,
\[
(q\downarrow \pi)=\{(x,p)\in \int_I F \mid x\in I,\ p\in F(x),\ q\le p\}. 
\]

By a standard cofinality argument for colimits (see, e.g., \cite[Chapter~IX]{MacLane}),
to show that \eqref{eq:int-to-Q} is an isomorphism it suffices to prove that
$\pi$ is final, 
i.e., that $(q\downarrow \pi)$ is non-empty and connected for every $q\in Q$. 
Note that $(q\downarrow \pi)$ is always non-empty: since $q\in Q=\bigcup_{x\in I}F(x)$, there exists $x_0\in I$ with $q\in F(x_0)$, hence $(x_0,q)\in (q\downarrow \pi)$.

For each $x\in I_q$, consider the full subposet
\[
(q\downarrow \pi)_x:=\{(x,p)\in(q\downarrow \pi)\}
\]
consisting of elements with first coordinate $x$.
Let $(x,p)\in(q\downarrow \pi)_x$. 
Then $p\in F(x)$ and $q\le p$ in $Q$.
Since $F(x)$ is a downset in $Q$, it follows that $q\in F(x)$.
Hence $(x,q)\in(q\downarrow \pi)_x$.
Moreover, $(x,q)$ is a minimum element of $(q\downarrow \pi)_x$, hence $(q\downarrow \pi)_x$ is connected.
Consequently, $(q\downarrow \pi)$ is connected if and only if $I_q$ is connected.
Since $I_q$ is connected by assumption, $(q\downarrow \pi)$ is connected.
\end{proof}

Via the isomorphism \eqref{eq:iterated-colim-to-grothendieck} applied to the pulled-back diagram
$\pi^{*}D\colon \int_I F\to \mathcal C$, 
we may identify \eqref{eq:int-to-Q} with the canonical map
\begin{equation}\label{eq:iterated-colim-to-Q}
    \colim_{x\in I}\ \colim D|_{F(x)} \longrightarrow \colim D.
\end{equation}
Consequently, under the assumption of Proposition~\ref{prop:finality-int-to-Q},
the map \eqref{eq:iterated-colim-to-Q} is an isomorphism.

\section*{Acknowledgements}
This work is supported by JSPS Grant-in-Aid for Transformative Research Areas (A) (22H05105). 
The author would like to thank Shunsuke Tada for helpful comments and discussions.

\bibliographystyle{alpha} 
\bibliography{main.bib}

@book{MacLane,
        isbn = {978-0-387-98403-2},
	pages = {318},
author={Mac Lane, Saunders},
	publisher = {Springer},
address ={New York, NY},
	series = {London Mathematical Society Student Texts},
	title = {Categories for the Working Mathematician},
	DOI = {https://doi.org/10.1007/978-1-4757-4721-8},
	year = {1978}
	}

@article {AET25,
    AUTHOR = {Aoki, Toshitaka and Escolar, Emerson G. and Tada, Shunsuke},
     TITLE = {Summand-injectivity of interval covers and monotonicity of
              interval resolution global dimensions},
   JOURNAL = {J. Appl. Comput. Topol.},
  FJOURNAL = {Journal of Applied and Computational Topology},
    VOLUME = {9},
      YEAR = {2025},
    NUMBER = {2},
     PAGES = {Paper No. 13},
      ISSN = {2367-1726,2367-1734},
   MRCLASS = {99-06},
  MRNUMBER = {4908881},
       DOI = {10.1007/s41468-025-00210-2},
       URL = {https://doi.org/10.1007/s41468-025-00210-2}
}

@article{HNH+16,
  title={Hierarchical structures of amorphous solids characterized by persistent homology},
  author={Hiraoka, Yasuaki and Nakamura, Takenobu and Hirata, Akihiko and Escolar, Emerson G. and Matsue, Kaname and Nishiura, Yasumasa},
  journal={Proceedings of the National Academy of Sciences},
  volume={113},
  number={26},
  pages={7035--7040},
  year={2016},
  publisher={National Acad Sciences}
}

@article{BBD,
  title={Faisceaux pervers},
  author={Beilinson, Alexander A and Bernstein, Joseph and Deligne, Pierre},
  journal={Ast{\'e}risque},
  volume=100,
  pages={5--171},
  year=1982,
  publisher={Springer Netherlands}
}

@article {Lesnick15,
    AUTHOR = {Lesnick, Michael},
     TITLE = {The theory of the interleaving distance on multidimensional
              persistence modules},
   JOURNAL = {Found. Comput. Math.},
  FJOURNAL = {Foundations of Computational Mathematics. The Journal of the
              Society for the Foundations of Computational Mathematics},
    VOLUME = {15},
      YEAR = {2015},
    NUMBER = {3},
     PAGES = {613--650},
      ISSN = {1615-3375,1615-3383},
   MRCLASS = {55N35},
  MRNUMBER = {3348168},
MRREVIEWER = {Peter\ Bubenik},
       DOI = {10.1007/s10208-015-9255-y},
       URL = {https://doi.org/10.1007/s10208-015-9255-y},
}

@article {CZ05,
    AUTHOR = {Zomorodian, Afra and Carlsson, Gunnar},
     TITLE = {Computing persistent homology},
   JOURNAL = {Discrete Comput. Geom.},
  FJOURNAL = {Discrete \& Computational Geometry. An International Journal
              of Mathematics and Computer Science},
    VOLUME = {33},
      YEAR = {2005},
    NUMBER = {2},
     PAGES = {249--274},
      ISSN = {0179-5376,1432-0444},
   MRCLASS = {55N99 (55-04 55U10)},
  MRNUMBER = {2121296},
MRREVIEWER = {Donald\ M.\ Davis},
       DOI = {10.1007/s00454-004-1146-y},
       URL = {https://doi.org/10.1007/s00454-004-1146-y},
}

@inproceedings{CCSGGO09,
  title={Proximity of persistence modules and their diagrams},
  author={Fr{\'e}d{\'e}ric Chazal and David Cohen-Steiner and Marc Glisse and Leonidas J. Guibas and Steve Oudot},
  booktitle={SCG '09},
  year={2009},
  url={https://api.semanticscholar.org/CorpusID:840484}
}

@article {CSEH07,
    AUTHOR = {Cohen-Steiner, David and Edelsbrunner, Herbert and Harer,
              John},
     TITLE = {Stability of persistence diagrams},
   JOURNAL = {Discrete Comput. Geom.},
  FJOURNAL = {Discrete \& Computational Geometry. An International Journal
              of Mathematics and Computer Science},
    VOLUME = {37},
      YEAR = {2007},
    NUMBER = {1},
     PAGES = {103--120},
      ISSN = {0179-5376,1432-0444},
   MRCLASS = {68U05 (55N05)},
  MRNUMBER = {2279866},
       DOI = {10.1007/s00454-006-1276-5},
       URL = {https://doi.org/10.1007/s00454-006-1276-5},
}

@article {BdSS15,
    AUTHOR = {Bubenik, Peter and de Silva, Vin and Scott, Jonathan},
     TITLE = {Metrics for generalized persistence modules},
   JOURNAL = {Found. Comput. Math.},
  FJOURNAL = {Foundations of Computational Mathematics. The Journal of the
              Society for the Foundations of Computational Mathematics},
    VOLUME = {15},
      YEAR = {2015},
    NUMBER = {6},
     PAGES = {1501--1531},
      ISSN = {1615-3375,1615-3383},
   MRCLASS = {55N35 (55U10)},
  MRNUMBER = {3413628},
MRREVIEWER = {Mikael\ Vejdemo Johansson},
       DOI = {10.1007/s10208-014-9229-5},
       URL = {https://doi.org/10.1007/s10208-014-9229-5},
}

@article {dSMS18,
    AUTHOR = {de Silva, V. and Munch, E. and Stefanou, A.},
     TITLE = {Theory of interleavings on categories with a flow},
   JOURNAL = {Theory Appl. Categ.},
  FJOURNAL = {Theory and Applications of Categories},
    VOLUME = {33},
      YEAR = {2018},
     PAGES = {Paper No. 21, 583--607},
      ISSN = {1201-561X},
   MRCLASS = {18D05 (18C10 18D10 18D20 55N35)},
  MRNUMBER = {3812461},
MRREVIEWER = {Julia\ Bergner},
}

@misc{BCM20,
      title={A Relative Theory of Interleavings}, 
      author={Magnus Bakke Botnan and Justin Curry and Elizabeth Munch},
      year={2020},
      note   = {arXiv preprint arXiv:2004.14286},
      url={https://arxiv.org/abs/2004.14286}, 
}

@article {MN26,
    AUTHOR = {McFaddin, Patrick K. and Needham, Tom},
     TITLE = {Interleaving distances, monoidal actions and 2-categories},
   JOURNAL = {Algebr. Geom. Topol.},
  FJOURNAL = {Algebraic \& Geometric Topology},
    VOLUME = {26},
      YEAR = {2026},
    NUMBER = {1},
     PAGES = {227--281},
      ISSN = {1472-2747,1472-2739},
   MRCLASS = {55N31 (18M05 18N10)},
  MRNUMBER = {5018430},
       DOI = {10.2140/agt.2026.26.227},
       URL = {https://doi.org/10.2140/agt.2026.26.227},
}

@article {EMY23,
    AUTHOR = {Escolar, Emerson G. and Meehan, Killian and Yoshiwaki, Michio},
     TITLE = {Interleavings and matchings as representations},
   JOURNAL = {Appl. Algebra Engrg. Comm. Comput.},
  FJOURNAL = {Applicable Algebra in Engineering, Communication and
              Computing},
    VOLUME = {34},
      YEAR = {2023},
    NUMBER = {6},
     PAGES = {965--993},
      ISSN = {0938-1279,1432-0622},
   MRCLASS = {16G30 (55N99)},
  MRNUMBER = {4652879},
MRREVIEWER = {Rui\ Miguel\ Saramago},
       DOI = {10.1007/s00200-021-00530-7},
       URL = {https://doi.org/10.1007/s00200-021-00530-7},
}

@misc{Tada25,
      title={Stability of Bipath Persistence Diagrams}, 
      author={Shunsuke Tada},
      year={2025},
      note   = {arXiv preprint arXiv:2503.01614},
      url={https://arxiv.org/abs/2503.01614}, 
}

@misc{Bjerkevik25,
      title={Stabilizing decomposition of multiparameter persistence modules}, 
      author={Håvard Bakke Bjerkevik},
      year={2025},
      note   = {arXiv preprint arXiv:2305.15550v3},
      url={https://arxiv.org/abs/2305.15550}, 
}

@article {Bjerkevik21,
    AUTHOR = {Bjerkevik, H\aa vard Bakke},
     TITLE = {On the stability of interval decomposable persistence modules},
   JOURNAL = {Discrete Comput. Geom.},
  FJOURNAL = {Discrete \& Computational Geometry. An International Journal
              of Mathematics and Computer Science},
    VOLUME = {66},
      YEAR = {2021},
    NUMBER = {1},
     PAGES = {92--121},
      ISSN = {0179-5376,1432-0444},
   MRCLASS = {55N31},
  MRNUMBER = {4270636},
MRREVIEWER = {Yuichi\ Ike},
       DOI = {10.1007/s00454-021-00298-0},
       URL = {https://doi.org/10.1007/s00454-021-00298-0},
}

@article {BS25,
    AUTHOR = {Bauer, Ulrich and Scoccola, Luis},
     TITLE = {Multi-parameter persistence modules are generically
              indecomposable},
   JOURNAL = {Int. Math. Res. Not. IMRN},
  FJOURNAL = {International Mathematics Research Notices. IMRN},
      YEAR = {2025},
    NUMBER = {5},
     PAGES = {Paper No. rnaf034, 31},
      ISSN = {1073-7928,1687-0247},
   MRCLASS = {55N31},
  MRNUMBER = {4870578},
MRREVIEWER = {Chengyuan\ Wu},
       DOI = {10.1093/imrn/rnaf034},
       URL = {https://doi.org/10.1093/imrn/rnaf034},
}

@article {BBK20,
    AUTHOR = {Bjerkevik, H\aa vard Bakke and Botnan, Magnus Bakke and
              Kerber, Michael},
     TITLE = {Computing the interleaving distance is {NP}-hard},
   JOURNAL = {Found. Comput. Math.},
  FJOURNAL = {Foundations of Computational Mathematics. The Journal of the
              Society for the Foundations of Computational Mathematics},
    VOLUME = {20},
      YEAR = {2020},
    NUMBER = {5},
     PAGES = {1237--1271},
      ISSN = {1615-3375,1615-3383},
   MRCLASS = {55N31 (15A83 68Q17)},
  MRNUMBER = {4156997},
MRREVIEWER = {Henry\ Hugh\ Adams},
       DOI = {10.1007/s10208-019-09442-y},
       URL = {https://doi.org/10.1007/s10208-019-09442-y},
}

@misc{GM22,
      title={Galois Connections in Persistent Homology}, 
      author={Aziz Burak Gulen and Alexander McCleary},
      year={2023},
      note   = {arXiv preprint arXiv:2201.06650v5},
      url={https://arxiv.org/abs/2201.06650}, 
}

@misc{CKM24,
      title={The Generalized Rank Invariant: M\"obius invertibility, Discriminating Power, and Connection to Other Invariants}, 
      author={Nathaniel Clause and Woojin Kim and Facundo Mémoli},
      year={2024},
      note   = {arXiv preprint arXiv:2207.11591v5},
      url={https://arxiv.org/abs/2207.11591}, 
}

@misc{KS25,
      title={Interleaving Distance as an Edit distance}, 
      author={Woojin Kim and Won Seong},
      year={2025},
      note   = {arXiv preprint arXiv:2509.24233},
      url={https://arxiv.org/abs/2509.24233}, 
}

@incollection {BL20,
    AUTHOR = {Bauer, Ulrich and Lesnick, Michael},
     TITLE = {Persistence diagrams as diagrams: a categorification of the
              stability theorem},
 BOOKTITLE = {Topological data analysis---the {A}bel {S}ymposium 2018},
    SERIES = {Abel Symp.},
    VOLUME = {15},
     PAGES = {67--96},
 PUBLISHER = {Springer, Cham},
      YEAR = {2020},
      ISBN = {978-3-030-43407-6; 978-3-030-43408-3},
   MRCLASS = {55N31},
  MRNUMBER = {4338669},
MRREVIEWER = {Haibin\ Hang},
       DOI = {10.1007/978-3-030-43408-3\_3},
       URL = {https://doi.org/10.1007/978-3-030-43408-3_3},
}

@article {MP20,
    AUTHOR = {McCleary, Alex and Patel, Amit},
     TITLE = {Bottleneck stability for generalized persistence diagrams},
   JOURNAL = {Proc. Amer. Math. Soc.},
  FJOURNAL = {Proceedings of the American Mathematical Society},
    VOLUME = {148},
      YEAR = {2020},
    NUMBER = {7},
     PAGES = {3149--3161},
      ISSN = {0002-9939,1088-6826},
   MRCLASS = {55N31 (05E99)},
  MRNUMBER = {4099800},
MRREVIEWER = {Magnus\ Bakke\ Botnan},
       DOI = {10.1090/proc/14929},
       URL = {https://doi.org/10.1090/proc/14929},
}

@article{HIY22,
  title={Algebraic stability theorem for derived categories of zigzag persistence modules},
  author={Hiraoka, Yasuaki and Ike, Yuichi and Yoshiwaki, Michio},
  journal={Journal of Topology and Analysis},
  volume={16},
  number={02},
  pages={555--585},
  year={2022},
  publisher={World Scientific Publishing Company},
  doi={10.1142/S1793525322500091}
}

@misc{BP25,
      title={An isometry theorem for persistent homology of circle-valued functions}, 
      author={Nathan Broomhead and Mariam Pirashvili},
      year={2025},
      note   = {arXiv preprint arXiv:2506.02999},
      url={https://arxiv.org/abs/2506.02999}, 
}

@article {BL18,
    AUTHOR = {Botnan, Magnus Bakke and Lesnick, Michael},
     TITLE = {Algebraic stability of zigzag persistence modules},
   JOURNAL = {Algebr. Geom. Topol.},
  FJOURNAL = {Algebraic \& Geometric Topology},
    VOLUME = {18},
      YEAR = {2018},
    NUMBER = {6},
     PAGES = {3133--3204},
      ISSN = {1472-2747,1472-2739},
   MRCLASS = {55N35 (55U99)},
  MRNUMBER = {3868218},
MRREVIEWER = {Ellen\ Gasparovic},
       DOI = {10.2140/agt.2018.18.3133},
       URL = {https://doi.org/10.2140/agt.2018.18.3133},
}

@misc{BBS24,
      title={Decomposing zero-dimensional persistent homology over rooted tree quivers}, 
      author={Riju Bindua and Thomas Brüstle and Luis Scoccola},
      year={2024},
      note   = {arXiv preprint arXiv:2411.19319},
      url={https://arxiv.org/abs/2411.19319}, 
}

@article {HR24,
    AUTHOR = {Hanson, Eric J. and Rock, Job Daisie},
     TITLE = {Decomposition of pointwise finite-dimensional {$\Bbb{S}^1$}
              persistence modules},
   JOURNAL = {J. Algebra Appl.},
  FJOURNAL = {Journal of Algebra and its Applications},
    VOLUME = {23},
      YEAR = {2024},
    NUMBER = {3},
     PAGES = {Paper No. 2450054, 24},
      ISSN = {0219-4988,1793-6829},
   MRCLASS = {16G20 (55N31)},
  MRNUMBER = {4688822},
       DOI = {10.1142/S0219498824500543},
       URL = {https://doi.org/10.1142/S0219498824500543},
}

@misc{GZ25,
  author = {Gao, Xiaowen and Zhao, Minghui},
  title  = {Isometry Theorem for Continuous Quiver of Type $\tilde{A}$},
  year   = {2025},
  note   = {arXiv preprint arXiv:2412.12462v2},
}

@article {RZ23,
    AUTHOR = {Rock, Job Daisie and Zhu, Shijie},
     TITLE = {Continuous {N}akayama representations},
   JOURNAL = {Appl. Categ. Structures},
  FJOURNAL = {Applied Categorical Structures. A Journal Devoted to
              Applications of Categorical Methods in Algebra, Analysis,
              Computer Science, Logic, Order and Topology},
    VOLUME = {31},
      YEAR = {2023},
    NUMBER = {5},
     PAGES = {Paper No. 44, 25},
      ISSN = {0927-2852,1572-9095},
   MRCLASS = {16G10 (16G20 26A48 37E05 37E10)},
  MRNUMBER = {4649409},
MRREVIEWER = {Frauke\ M.\ Bleher},
       DOI = {10.1007/s10485-023-09748-7},
       URL = {https://doi.org/10.1007/s10485-023-09748-7},
}

@incollection {ELZ02,
    AUTHOR = {Edelsbrunner, Herbert and Letscher, David and Zomorodian,
              Afra},
     TITLE = {Topological persistence and simplification},
      NOTE = {Discrete and computational geometry and graph drawing
              (Columbia, SC, 2001)},
   JOURNAL = {Discrete Comput. Geom.},
  FJOURNAL = {Discrete \& Computational Geometry. An International Journal
              of Mathematics and Computer Science},
    VOLUME = {28},
      YEAR = {2002},
    NUMBER = {4},
     PAGES = {511--533},
      ISSN = {0179-5376,1432-0444},
   MRCLASS = {52B55 (65D18)},
  MRNUMBER = {1949898},
MRREVIEWER = {H.\ W.\ Guggenheimer},
       DOI = {10.1007/s00454-002-2885-2},
       URL = {https://doi.org/10.1007/s00454-002-2885-2},
}

@article {CCR13,
    AUTHOR = {Chan, Joseph Minhow and Carlsson, Gunnar and Rabadan, Raul},
     TITLE = {Topology of viral evolution},
   JOURNAL = {Proc. Natl. Acad. Sci. USA},
  FJOURNAL = {Proceedings of the National Academy of Sciences of the United
              States of America},
    VOLUME = {110},
      YEAR = {2013},
    NUMBER = {46},
     PAGES = {18566--18571},
      ISSN = {0027-8424,1091-6490},
   MRCLASS = {92D15},
  MRNUMBER = {3153945},
       DOI = {10.1073/pnas.1313480110},
       URL = {https://doi.org/10.1073/pnas.1313480110},
}

@article{QTT+19,
	author = {Talha Qaiser and Yee-Wah Tsang and Daiki Taniyama and Naoya Sakamoto and Kazuaki Nakane and David Epstein and Nasir Rajpoot},
	doi = {https://doi.org/10.1016/j.media.2019.03.014},
	issn = {1361-8415},
	journal = {Medical Image Analysis},
	pages = {1-14},
	title = {Fast and accurate tumor segmentation of histology images using persistent homology and deep convolutional features},
	url = {https://www.sciencedirect.com/science/article/pii/S1361841518302688},
	volume = {55},
	year = {2019}
}

@inproceedings{HKWNU17,
  author    = {Hofer, Christoph and Kwitt, Roland and Niethammer, Marc and Uhl, Andreas},
  title     = {Deep Learning with Topological Signatures},
  booktitle = {Advances in Neural Information Processing Systems 30 (NeurIPS 2017)},
  pages     = {1634--1644},
  year      = {2017}
}

@article{AAF19,
  author  = {Aktas, Mehmet Emin and Akbas, Esra and El Fatmaoui, Abdoulaye},
  title   = {Persistent homology of networks: methods and applications},
  journal = {Applied Network Science},
  year    = {2019},
  volume  = {4},
  number  = {1},
  pages   = {61},
  doi     = {10.1007/s41109-019-0169-9}
}

@book {Hovey99,
    AUTHOR = {Hovey, Mark},
     TITLE = {Model categories},
    SERIES = {Mathematical Surveys and Monographs},
    VOLUME = {63},
 PUBLISHER = {American Mathematical Society, Providence, RI},
      YEAR = {1999},
     PAGES = {xii+209},
      ISBN = {0-8218-1359-5},
   MRCLASS = {55U35 (18D15 18G30 18G55)},
  MRNUMBER = {1650134},
MRREVIEWER = {Teimuraz\ Pirashvili},
}

@article {Kuhn94,
    AUTHOR = {Kuhn, Nicholas J.},
     TITLE = {Generic representations of the finite general linear groups
              and the {S}teenrod algebra. {II}},
   JOURNAL = {$K$-Theory},
  FJOURNAL = {$K$-Theory. An Interdisciplinary Journal for the Development,
              Application, and Influence of $K$-Theory in the Mathematical
              Sciences},
    VOLUME = {8},
      YEAR = {1994},
    NUMBER = {4},
     PAGES = {395--428},
      ISSN = {0920-3036,1573-0514},
   MRCLASS = {55S10 (19A49 20G05)},
  MRNUMBER = {1300547},
MRREVIEWER = {Haynes\ R.\ Miller},
       DOI = {10.1007/BF00961409},
       URL = {https://doi.org/10.1007/BF00961409},
}

@article {BL24,
    AUTHOR = {Blumberg, Andrew J. and Lesnick, Michael},
     TITLE = {Stability of 2-parameter persistent homology},
   JOURNAL = {Found. Comput. Math.},
  FJOURNAL = {Foundations of Computational Mathematics. The Journal of the
              Society for the Foundations of Computational Mathematics},
    VOLUME = {24},
      YEAR = {2024},
    NUMBER = {2},
     PAGES = {385--427},
      ISSN = {1615-3375,1615-3383},
   MRCLASS = {55N31 (62R40)},
  MRNUMBER = {4733354},
MRREVIEWER = {Mauricio\ Che},
       DOI = {10.1007/s10208-022-09576-6},
       URL = {https://doi.org/10.1007/s10208-022-09576-6},
}

@misc{MM17,
      title={An Isometry Theorem for Generalized Persistence Modules}, 
      author={Killian Meehan and David Meyer},
      year={2017},
      note   = {arXiv preprint arXiv:1710.02858},
      url={https://arxiv.org/abs/1710.02858}, 
}

\end{document}